\documentclass[a4paper,11pt]{article}
\usepackage{amsfonts,amssymb,amsmath, dsfont,relsize,accents,amsthm,tikz}
\usepackage{bm}
\usepackage{mathrsfs}  
\usepackage{color}
\usepackage{upgreek}
\usepackage[english]{babel}
\selectlanguage{english}
\usepackage{graphicx}
\usepackage{microtype}
\usepackage[colorlinks=true, pdfstartview=FitV, linkcolor=blue, citecolor=blue, urlcolor=blue,pagebackref=false]{hyperref}
\usepackage{graphicx}
\usepackage{epstopdf}

\definecolor{labelkey}{gray}{.8}
\definecolor{refkey}{gray}{.8}

\topmargin -0.5in
\textheight 9.5in
\oddsidemargin 0.15in
\evensidemargin 0.25in
\textwidth 6.15in

\definecolor{darkred}{rgb}{0.9,0.1,0.1}


 \makeatletter
 \@addtoreset{equation}{section}
 \makeatother

 \makeatletter
 \@addtoreset{enunciato}{section}
 \makeatother

 \newcounter{enunciato}[section]

 \newtheorem{ittheorem}{Theorem}
 \newtheorem{itlemma}{Lemma}
 \newtheorem{itproposition}{Proposition}
 \newtheorem{itcorollary}{Corollary}
 \newtheorem{itdefinition}{Definition}
 \newtheorem{itremark}{Remark}
 \newtheorem{itclaim}{Claim}
 \newtheorem{itfact}{Fact}
 \newtheorem{itconjecture}{Conjecture}

 \newenvironment{theorem}{\addtocounter{enunciato}{1}
 \begin{ittheorem}}{\end{ittheorem}}

 \newenvironment{lemma}{\addtocounter{enunciato}{1}
 \begin{itlemma}}{\end{itlemma}}

 \newenvironment{proposition}{\addtocounter{enunciato}{1}
 \begin{itproposition}}{\end{itproposition}}

 \newenvironment{corollary}{\addtocounter{enunciato}{1}
 \begin{itcorollary}}{\end{itcorollary}}

 \newenvironment{definition}{\addtocounter{enunciato}{1}
 \begin{itdefinition}}{\end{itdefinition}}

 \newenvironment{remark}{\addtocounter{enunciato}{1}
 \begin{itremark}}{\end{itremark}}

 \newenvironment{claim}{\addtocounter{enunciato}{1}
 \begin{itclaim}}{\end{itclaim}}

 \newenvironment{fact}{\addtocounter{enunciato}{1}
 \begin{itfact}}{\end{itfact}}

 \newenvironment{conjecture}{\addtocounter{enunciato}{1}
 \begin{itconjecture}}{\end{itconjecture}}

 \newcommand{\be}[1]{\begin{equation}\label{#1}}
 \newcommand{\ee}{\end{equation}}

 \newcommand{\bl}[1]{\begin{lemma}\label{#1}}
 \newcommand{\el}{\end{lemma}}

 \newcommand{\br}[1]{\begin{remark}\label{#1}}
 \newcommand{\er}{\end{remark}}

 \newcommand{\bt}[1]{\begin{theorem}\label{#1}}
 \newcommand{\et}{\end{theorem}}

 \newcommand{\bd}[1]{\begin{definition}\label{#1}}
 \newcommand{\ed}{\end{definition}}

 \newcommand{\bcl}[1]{\begin{claim}\label{#1}}
 \newcommand{\ecl}{\end{claim}}

 \newcommand{\bfact}[1]{\begin{fact}\label{#1}}
 \newcommand{\efact}{\end{fact}}

 \newcommand{\bp}[1]{\begin{proposition}\label{#1}}
 \newcommand{\ep}{\end{proposition}}

 \newcommand{\bc}[1]{\begin{corollary}\label{#1}}
 \newcommand{\ec}{\end{corollary}}

 \newcommand{\bcj}[1]{\begin{conjecture}\label{#1}}
 \newcommand{\ecj}{\end{conjecture}}

 \newcommand{\bpr}{\begin{proof}}
 \newcommand{\epr}{\end{proof}}

 \newcommand{\bprlem}[1]{\begin{proofof}{\it Lemma \ref{#1}}.\,\,}
 \newcommand{\eprlem}{\end{proofof}}

 \newcommand{\bprthm}[1]{\begin{proofof}{\it Theorem \ref{#1}}.\,\,}
 \newcommand{\eprthm}{\end{proofof}}

 \newcommand{\bprprop}[1]{\begin{proofof}{\it Proposition \ref{#1}}.\,\,}
 \newcommand{\eprprop}{\end{proofof}}

 \newcommand{\bi}{\begin{itemize}}
 \newcommand{\ei}{\end{itemize}}

 \newcommand{\ben}{\begin{enumerate}}
 \newcommand{\een}{\end{enumerate}}



 \parskip=3pt plus 1pt minus 1pt

 \newcommand{\one}{{\mathchoice {1\mskip-4mu\mathrm l}
         {1\mskip-4mu\mathrm l}
         {1\mskip-4.5mu\mathrm l}
         {1\mskip-5mu\mathrm l}}}


\def \E {{\mathbb E}}

\def \N {{\mathbb N}}
\def \P {{\mathbb P}}

\def \R {{\mathbb R}}
\def \Z {{\mathbb Z}}

\def \lra \leftrightarrow

\def \ra {\rightarrow}
\def \ba {\begin{array}}
\def \ea {\end{array}}

\def \lra {\longrightarrow}
\def \bbe {\bar{\beta}}

\def \lra {{\leftrightarrow}}

\def \subset {\subseteq}

\def \scF {\mathscr{F}}
\def \rsa {\rightsquigarrow}

\def \tplus {t_{(+)}}
\def \tmin {t_{(-)}}

\def\one{\rlap{\mbox{\small\rm 1}}\kern.15em 1}

\newlength{\dhatheight}


\begin{document}
\title{The asymmetric multitype contact process}

\author{Thomas Mountford\textsuperscript{1}, Pedro Luis Barrios Pantoja$^2$, Daniel Valesin\textsuperscript{2}}
\footnotetext[1]{ Ecole Polytechnique F\'ed\'erale de Lausanne. \url{thomas.mountford@epfl.ch}}
\footnotetext[2]{University of Groningen. \url{p.l.barrios.pantoja@rug.nl}, \url{d.rodrigues.valesin@rug.nl}}
\date{October 17, 2017}
\maketitle

\begin{abstract}
In the multitype contact process, vertices of a graph can be empty or occupied by a type 1 or a type 2 individual; an individual of type $i$ dies with rate 1 and sends a descendant to a neighboring empty site with rate $\lambda_i$. We study this process on $\Z^d$ with $\lambda_1 > \lambda_2$ and $\lambda_1$ larger than the critical value of the (one-type) contact process. We prove that, if there is at least one type 1 individual in the initial configuration, then type 1 has a positive probability of never going extinct. Conditionally on this event, type 1 takes over a ball of radius growing linearly in time.  We also completely characterize the set of stationary distributions of the process and prove that the process started from any initial configuration converges to a convex combination of distributions in this set.
\end{abstract}

{\bf\large{}}\bigskip


\section{Introduction}
The \textit{multitype contact process} is an interacting particle system introduced by Neuhauser in \cite{neuhauser} as a variant of Harris' contact process (\cite{harris}) and a model for biological competition between species occupying space. The model on the $d$-dimensional lattice $\Z^d$ is defined as  the continuous-time Markov process $(\xi_t)_{t \geq 0}$ on $\{0,1,2\}^{\Z^d}$ with infinitesimal pregenerator
\begin{equation}\label{eq:pregenerator}
\mathcal{L}f(\xi) = \sum_{x \in \Z^d} (f(\xi^{0 \to x}) - f(\xi)) + \sum_{i\in \{1,2\}} \lambda_i \sum_{\substack{x,y \in (\Z^d)^2:\\0 < \|x-y\| \leq R}}  \mathds{1}_{\{\xi(x) = 0,\;\xi(y) = i \}} \cdot  (f(\xi^{i\to x}) - f(\xi)),
\end{equation}
where 
$$\xi^{i\to x}(y) = \begin{cases} i,&\text{if } y = x;\\\xi(y),&\text{otherwise,}\end{cases}\qquad i \in \{0,1,2\},$$
The parameters $\lambda_1, \lambda_2 \geq 0$ are called the \textit{birth rates}, $R \in \N$ is called the \textit{range}, $\|\cdot\|$ is the $\ell^1$ norm on $\Z^d$, $\mathds{1}$ denotes the indicator function and $f: \{0,1,2\}^{\Z^d} \to \mathbb{R}$ is a local function.

Let us give the biological interpretation of the process and explain the dynamics in words. Each site $x \in \Z^d$ is a spatial location, which at any time $t$ can be empty ($\xi_t(x) = 0$) or occupied by an individual of type (or species) 1 or 2 ($\xi_t(x) =1$ or $2$). Individuals die with rate 1, leaving their site empty; additionally, an individual of type $i \in \{1,2\}$ at site $x$ attempts to create a descendant in each site $y$ with $0< \|x-y\| \leq R$ with rate $\lambda_i$; such a birth is only allowed if site $y$ is empty. It should be noted that, although here we take a single ``death rate'' equal to 1 and a single range equal to $R$, one could also define the model so that these parameters depend on the species.

Evidently, the multitype contact process has the feature that the ``all zero'' configuration is absorbing, as are both the sets of configurations
\begin{equation}\label{eq:a_sets}\mathscr{A}_1 = \{\xi: \xi(x) \neq 2 \;\forall x\},\qquad \mathscr{A}_2 = \{\xi: \xi(x) \neq 1 \;\forall x\}.\end{equation}
The process started from $\xi_0 \in \mathscr{A}_1$ is Harris' (one-type) contact process with interactions of range $R$ and rate $\lambda_1$. (Similarly, in the process started from $\xi_0 \in \mathscr{A}_2$, the 2's evolve as the 1's would evolve in a contact process with range $R$ and rate $\lambda_2$).

Whenever we want to emphasize that we are referring to the one-type, and not multitype, contact process, we will denote it by $(\zeta_t)_{t\geq 0}$. The contact process has been introduced in \cite{harris}; see \cite{lig85} and \cite{lig99} for a comprehensive exposition, and for all facts about the one-type contact process which we mention without giving an explicit reference. For the exposition in this introduction, the \textit{critical rate} of the one-type contact process will be relevant; this is defined as follows. Let $\P_{d,\lambda, R}$ be a probability measure under which the contact process $(\zeta_t)_{t\geq 0}$ on $\Z^d$ with rate $\lambda$ and range $R$ is defined. Note that the function $\lambda \mapsto \P_{d,\lambda,R}\left[\exists t: \zeta_t = \underline{0} \right]$ is non-increasing and let
$$\lambda_c = \lambda_c(d,R) = \sup\left\{\lambda: \; \P_{d,\lambda,R}\left[\exists t: \zeta_t \equiv \underline{0} \right] = 1 \right\}.$$ 
As is well known, $\lambda_c(d,R) \in (0,\infty)$ for every $d$ and $R$, and $\P_{d,\lambda_c,R}\left[\exists t: \zeta_t \equiv \underline{0} \right] = 1$. The set of (extremal) stationary distributions of the contact process consists of two measures: $\delta_{\underline{0}}$ (the unit mass on the ``all zero'' configuration) and $\bar{\mu}$, the limiting distribution, as time is taken to infinity, of the process started from the ``all one'' configuration. In case $\lambda \leq \lambda_c$, these two measures are equal; otherwise, $\bar{\mu}$ is a measure supported on configurations containing infinitely many 1's. The \textit{complete convergence theorem} for the contact process is the statement that, for any initial configuration $\zeta_0 \in \{0,1\}^{\Z^d}$,
$$\zeta_t \xrightarrow[\text{(d)}]{t\to\infty} \P[\exists t: \zeta_t = \underline{0}] \cdot  \delta_{\underline{0}} + \P[\nexists t: \zeta_t = \underline{0}] \cdot \bar{\mu}.$$


In the multitype contact process $(\xi_t)$, we say that the 1's survive if the event
\begin{equation} \label{eq:survival_event}\mathcal{S}_1 = \left\{\nexists t:\; \xi_t \in \mathscr{A}_2\right\}\end{equation}
occurs; otherwise we say that the 1's go extinct. In studying extinction and survival, we must eliminate two trivial cases. First: in case there are infinitely many 1's in the initial configuration, it is easy to see that they survive almost surely. Second: if there are finitely many 1's in the initial configuration and $\lambda_1  \leq \lambda_c(d,R)$, then it is easy to see that the 1's almost surely go extinct (as then their evolution is stochastically dominated by that of a one-type contact process which almost surely reaches the ``all zero'' configuration). The references \cite{amp} and \cite{valesin} treat the multitype contact process for $d =1$ and the symmetric setting $\lambda_1 = \lambda_2$, and establish conditions for survival or extinction of one of the types (say, the 1's). Having eliminated the trivial cases above, we are left with the situation in which $\lambda_1 = \lambda_2 > \lambda_c(d=1,R)$ and $\xi_0$ only has finitely many 1's (so that the 1's are confined to an interval $[-m, m]$). It then turns out that the 1's almost surely go extinct if and only if they are surrounded by infinitely many 2's in both directions (that is, if $\xi_0(x) = 2$ for infinitely many $x < -m$ and infinitely many $x > m$). This result has been proved in \cite{amp} for $R = 1$, and in \cite{valesin} through different methods and for any $R$.

In this paper, we turn to the case of distinct rates and study survival of the type with larger rate (that is, we assume that $\lambda_1 > \lambda_2$ and study survival of the 1's). Our main result holds for any dimension and range.
\begin{theorem}
\label{thm:main} Let $d \geq 1$, $R \in \N$ and assume that $\lambda_1 > \lambda_2$ and $\lambda_1 > \lambda_c(d,R)$. \begin{enumerate} \item If $\xi_0$ is a configuration containing at least one type 1 individual, then the event $\mathcal{S}_1$ that the 1's survive has positive probability. \item There exists $\alpha > 0$ such that the following holds. If $\xi_0(0) = 1$ and $\xi_0(x) \neq 1$ for all $x \neq 0$, then conditioned on $\mathcal{S}_1$, almost surely there exists $t_0 \geq 0$ such that
\begin{equation} \xi_t(x) \in \{0,1\} \text{ for all } t \geq t_0 \text{ and } x \text{ with } \|x \|\leq \alpha t.\label{eq:main_cone}\end{equation}\end{enumerate}
\end{theorem}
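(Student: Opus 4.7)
The plan is to prove both parts of the theorem simultaneously via a renormalization argument leading to a comparison with supercritical oriented percolation. The central object is a block event $G = G(L,T)$, measurable with respect to the graphical construction on a bounded space-time box of side $O(L)$ and duration $T$, with two properties: (i) $\P[G] \geq 1-\epsilon$ for $L,T$ large and $\epsilon>0$ small; and (ii) on $G$, whenever $\xi_0$ contains at least one type 1 in a central box, then $\xi_T$ restricted to each of $2d$ boxes centered at points at distance $\asymp L$ from the origin contains a dense cluster of type 1's and no type 2's, uniformly in the rest of $\xi_0$. Once such $G$ is available, a Bezuidenhout--Grimmett-style restart plus comparison with supercritical oriented site percolation on $\Z^d\times\N$ yields both conclusions: Part 1 follows from the positivity of percolation from a single open seed, and Part 2 follows from the fact that the infinite open cluster deterministically fills a cone of slope $\alpha>0$, while the design of $G$ forces $\xi_t(x)\in\{0,1\}$ throughout that cone.

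For the construction of $G$ I would use the standard arrow decomposition on each oriented edge $(x,y)$: \emph{universal} arrows at rate $\lambda_2$ (fire regardless of the type at $x$) and \emph{exclusive} arrows at rate $\lambda_1-\lambda_2$ (fire only when $x$ is type 1), together with death marks at rate 1 at each site. The pure rate-$\lambda_1$ contact process driven by all arrows dominates the occupied set of $\xi$; since $\lambda_1>\lambda_c$, by the shape theorem this pure process from a single seed at the origin fills a space-time cone with positive density of 1's at time $T$ with probability at least $1-\epsilon/2$. The block event $G$ then asks for this event together with an additional event ensuring that type 2's are absent in a slightly smaller cone at time $T$.

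The main obstacle, and where the asymmetry $\lambda_1>\lambda_2$ enters decisively, is this second requirement. A site $y$ that is type 1 in the pure process fails to be type 1 in the multitype only if some ancestral birth arrow targeting $y$ or an ancestor along its birth path was blocked by a type-2 occupant at the time of firing. The key ingredients to control this are: every type-2 individual dies at rate 1 regardless of context, so any given blocker is short-lived; the density of 2's is stochastically dominated by a pure rate-$\lambda_2$ contact process (bounded even starting from the ``all 2'' state); and exclusive arrows provide a birth mechanism for 1's that is unavailable to 2's, so once a type 1 is successfully established at a site, the site resists type-2 recolonization in the coupling. An ancestry argument on the graphical construction, combined with a large-deviations estimate on the fraction of blocked ancestral paths along the shape-theorem cone, should suffice to establish $\P[G]\geq 1-\epsilon$ and close the reduction to oriented percolation, from which both parts of the theorem follow.
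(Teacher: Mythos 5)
Your arrow decomposition is exactly the paper's augmented Harris system (arrows at rate $\lambda_2$ usable by both types, selective arrows at rate $\lambda_1-\lambda_2$ usable only by 1's), and a renormalization/oriented-percolation scheme is a plausible architecture for Part 1. But the proposal has a genuine gap at its center: the entire difficulty of the theorem is hidden inside the claim $\P[G]\geq 1-\epsilon$, specifically the requirement that type 2's be absent from a region of diameter $\asymp L$ at time $T$, uniformly over the configuration outside the seed. When $\lambda_2>\lambda_c$ and $\xi_0$ is, say, identically 2 off the origin, the 2's form a surviving sea of positive density; your three heuristics do not dispose of it. That blockers die at rate 1 is irrelevant because they reproduce before dying; stochastic domination of the 2's by a rate-$\lambda_2$ contact process gives nothing when that process survives with positive density; and the assertion that ``once a type 1 is established at a site, the site resists type-2 recolonization'' is false --- the 1 dies at rate 1 and a neighboring 2 then recolonizes via an ordinary arrow, with no asymmetry protecting the site. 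The concluding sentence (``an ancestry argument \dots should suffice'') asserts precisely what must be proved. Note also that a $G$ measurable with respect to a bounded space-time box cannot by itself control 2's entering from outside, though that part is repairable via finite speed of propagation \eqref{eq:1cp_moves_slowly}.

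What actually fills this gap in the paper is Proposition \ref{prop:cone0}: with probability $1-e^{-\bar c\ell}$, any space-time point reachable from time $0$ by a basic infection path is in fact reachable by a \emph{free} selective infection path from a prescribed earlier box --- ``free'' meaning every jump lands on a site not reachable from time $0$, hence necessarily vacant, so the path is guaranteed to transport a 1 regardless of the surrounding sea of 2's. Proving this requires reversing time (Proposition \ref{prop:cone0_dual}), the ancestor process, and a steering mechanism (Proposition \ref{prop:steer}) in which the asymmetry enters at one precise point: at a bifurcation of the ancestor path, the presence of a selective arrow biases the choice of branch, producing a strict drift that lets a renewal random-walk argument steer the reversed path into the target box. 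Your proposal contains neither this notion of free path nor any substitute for the drift mechanism, so it does not constitute a proof; it is a correct reduction of the theorem to a block estimate that is essentially equivalent to the theorem itself.
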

Note the contrast (at least in dimension one) with the result of \cite{amp} and \cite{valesin} mentioned above. For instance, if $\lambda_1 > \lambda_2$, $\lambda_1 > \lambda_c$, $\xi_0(0) = 1$ and $\xi_0(x) = 2$ for all $x \neq 0$, then the 1's almost surely go extinct in the symmetric case $\lambda_1 = \lambda_2$ and survive with positive probability if $\lambda_1 > \lambda_2$.

Given a choice of the parameters $d,\lambda_1,\lambda_2, R$, let $\bar{\mu}_1$ and $\bar{\mu}_2$ be the limiting distributions for the process started from the ``all 1's'' and ``all 2's'' configurations, respectively. Evidently, for $i = 1,2$, $\bar{\mu}_i$ is supported on $\mathscr{A}_i$ and $\bar{\mu}_i \neq \delta_{\underline{0}}$ if and only if $\lambda_i > \lambda_c(d,R)$.
Also define the event
$$\mathcal{S}_2 = \{\nexists t: \xi_t \in \mathscr{A}_1\}.$$
We prove a complete convergence theorem for the asymmetric multitype contact process:
\begin{theorem}
\label{thm:main2} 
Assume that $\lambda_1 > \lambda_2$. For any $\xi_0 \in \{0,1,2\}^{\Z^d}$,
\begin{equation}\label{eq:thm_main2}
\xi_t \xrightarrow[(d)]{t \to \infty} \P(\mathcal{S}_1)\cdot \bar{\mu}_1 + \P(\mathcal{S}_1^c \cap \mathcal{S}_2) \cdot \bar{\mu}_2 + \P((\mathcal{S}_1 \cup \mathcal{S}_2)^c) \cdot \delta_{\underline{0}}.
\end{equation}
In particular, the set of extremal stationary distributions of the process is equal to $\{\delta_{\underline{0}},\bar{\mu}_1,\bar{\mu}_2\}$.
\end{theorem}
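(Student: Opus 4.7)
The plan is to prove the convergence \reff{eq:thm_main2} by decomposing the event space into the three disjoint events $\mcl S_1$, $\mcl S_1^c \cap \mcl S_2$, and $(\mcl S_1 \cup \mcl S_2)^c$, each giving one of the three terms on the right-hand side. It suffices to check that $\P(\xi_t \in E)$ converges to the right-hand side of \reff{eq:thm_main2} evaluated on $E$, for every cylinder event $E$ depending on finitely many coordinates $F \subseteq \Z^d$. Once this convergence is in hand, the identification of the extremal stationary distributions follows from the fact that any stationary distribution arises as a weak limit of the process for some initial condition, combined with the observation that $\delta_{\un 0}$, $\bar\mu_1$, and $\bar\mu_2$ are each manifestly extremal.

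Two of the three contributions are standard. On $(\mcl S_1 \cup \mcl S_2)^c$ both types become extinct, so $\xi_t \to \un 0$ almost surely. For $\mcl S_1^c \cap \mcl S_2$, let $\tau := \inf\{u \geq 0 : \xi_u \in \mathscr{A}_2\}$, which is a.s.\ finite on $\mcl S_1^c$; for $u \geq \tau$ the process $\xi$ evolves as a one-type contact process of rate $\lambda_2$ started from $\xi_\tau$, and conditionally on $\mcl F_\tau$ the event $\mcl S_2$ coincides with the survival event of this one-type process. Invoking the strong Markov property at $\tau$ together with the complete convergence theorem for the one-type contact process yields the contribution $\P(\mcl S_1^c \cap \mcl S_2)\,\bar\mu_2(E)$.

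The main step is to show $\P(\xi_t \in E, \mcl S_1) \to \P(\mcl S_1)\,\bar\mu_1(E)$. For $t$ large, write $t = t_1 + s$ with $s = \lfloor (\alpha t - \|F\|_\infty)/(R + \alpha) \rfloor$, so that $t_1, s \to \infty$ and $\|F\|_\infty + R s \leq \alpha t_1$. Let $(\eta_u)_{u \geq t_1}$ be the one-type contact process of rate $\lambda_1$ with $\eta_{t_1} := \{x : \xi_{t_1}(x) = 1\}$, realized from the same graphical construction using only the rate-$\lambda_1$ arrows and death marks. Theorem \ref{thm:main}(ii) (extended from the single-seed case to arbitrary $\xi_0$ containing a $1$, which is a minor modification) implies that on $\mcl S_1$ there is a.s.\ a random $u_0$ such that $\xi_u(x) \neq 2$ for all $u \geq u_0$ and $\|x\| \leq \alpha u$. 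A standard finite-speed-of-propagation argument, together with the choice of $s$, then shows that on the event $\{u_0 \leq t_1\}$ (whose intersection with $\mcl S_1$ has probability tending to $\P(\mcl S_1)$) the backward light cone of any $(x, t)$ with $x \in F$ stays inside the no-$2$ region, and therefore the multitype dynamics restricted to this cone coincides with the one-type dynamics of $\eta$, giving $\xi_t(x) = \eta_t(x)$ for all $x \in F$. The standard graphical coupling also yields $\{y : \xi_u(y) = 1\} \subseteq \{y : \eta_u(y) = 1\}$ for all $u \geq t_1$, so on $\mcl S_1$ the process $\eta$ survives; writing $\tilde{\mcl S}_1 := \{\eta \text{ survives}\}$, the symmetric difference $\mcl S_1 \triangle \tilde{\mcl S}_1$ is contained in $\{t_1 < T_1 < \infty\}$ (with $T_1$ the $1$-extinction time for $\xi$), whose probability tends to $0$ as $t_1 \to \infty$. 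Combining these reductions with the complete convergence theorem applied to the one-type process $\eta$ yields $\P(\xi_t \in E, \mcl S_1) \to \P(\mcl S_1)\,\bar\mu_1(E)$.

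The main obstacle is verifying the effectiveness of the complete convergence for $\eta$ in the joint limit $t_1, s \to \infty$: one must show that the random initial condition $\eta_{t_1}$, with probability close to $\P(\mcl S_1)$, is sufficiently dense inside the ball $\{x : \|x\| \leq \alpha t_1\}$ that the distribution of $\eta_{t_1 + s}$ restricted to $F$ is $o(1)$-close in total variation to $\bar\mu_1$ restricted to $F$ within the allotted time budget $s$. This density property follows from Theorem \ref{thm:main}(ii) and its proof, since inside the cone the $1$'s behave as a surviving one-type contact process of rate $\lambda_1$ and therefore attain positive asymptotic density.
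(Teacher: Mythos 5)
Your decomposition into $\mathcal{S}_1$, $\mathcal{S}_1^c\cap\mathcal{S}_2$ and $(\mathcal{S}_1\cup\mathcal{S}_2)^c$, and your handling of the last two terms (dominated convergence; strong Markov property plus one-type complete convergence once the 1's are gone) are exactly the paper's. For the $\bar{\mu}_1$ term your plan is also the paper's in outline: establish that on $\mathcal{S}_1$ the configuration at a large intermediate time $t_1$ is ``good'' (no 2's in a large ball around the window, many 1's nearby), then run the remaining time $s$ as a rate-$\lambda_1$ one-type process and appeal to complete convergence. The problem is precisely the step you label ``the main obstacle'': your proposed resolution does not close it. Theorem \ref{thm:main}(ii) and its extension give only the \emph{absence of 2's} in a linearly growing cone; they say nothing about the 1's being present, let alone dense, near the window at time $t_1$. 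That separate fact is what the paper proves in Lemma \ref{lem:Lchange1} (via the events $A_2$--$A_4$, using \eqref{eq:dies_many} and \eqref{eq:no_cross}) and transfers to the law conditioned on $\mathcal{S}_1$ in Lemma \ref{lem:Lchange3}; the sentence ``inside the cone the 1's behave as a surviving one-type contact process and therefore attain positive asymptotic density'' presupposes rather than proves it.

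More importantly, even granted that $\eta_{t_1}$ has many occupied sites near $F$, what you need is a \emph{quantitative, uniform-in-the-initial-condition} complete convergence statement: for every $\varepsilon$ there exist $n,u_0$ such that any initial configuration with at least $n$ particles within distance $\sqrt{u}$ of the window, run for time $u\geq u_0$, has law on $F$ within $\varepsilon$ of $\bar{\mu}_1$. The textbook complete convergence theorem is an assertion about a \emph{fixed} initial configuration as $s\to\infty$ and cannot be invoked in your joint limit $t_1,s\to\infty$, where the initial condition is random and changes with $t_1$. This uniform version is exactly the paper's Lemma \ref{lem:Lchange4}, whose proof is a genuine coupling argument (compare with the all-ones process, control the discrepancy set, apply \eqref{eq:dies_many} and \eqref{eq:no_cross}); it is not a consequence of Theorem \ref{thm:main}(ii). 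Two smaller points: the speed bound $Rs$ in your choice of $s$ is not the correct propagation speed for a rate-$\lambda_1$ range-$R$ process (one should use the constants $b_1,b_2$ from \eqref{eq:1cp_moves_slowly}, which only changes constants), and once Lemmas \ref{lem:many_survival_goes}, \ref{lem:Lchange3} and \ref{lem:Lchange4} are available one still needs to splice them together as the paper does, by approximating $\mathds{1}_{\mathcal{S}_1}$ by the indicator of the good event at time $t-u$ in both directions; that bookkeeping is routine, but the two lemmas themselves are the missing substance of your argument.
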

Note that the statement of the theorem includes the three situations: $\lambda_2<\lambda_1 \le \lambda_c$ (in which $\bar{\mu}_1 = \bar{\mu}_2 = \delta_{\underline{0}}$), $\lambda_2 \le \lambda_c < \lambda_1$ (in which $\bar{\mu}_2 = \delta_{\underline{0}}$, $\bar{\mu}_1 \neq \delta_{\underline{0}}$) and $\lambda_c < \lambda_2 < \lambda_1$ (in which $\bar{\mu}_1 \neq \delta_{\underline{0}}$ and $\bar{\mu}_2 \neq \delta_{\underline{0}}$).

In \cite{neuhauser}, the following weaker result is proved: if $\lambda_1 > \lambda_2 > \lambda_c$, and if $\xi_0$ is a random configuration whose distribution is translation invariant and contains 1's, then $\xi_t$ converges in distribution to $\bar{\mu}_1$. Note that under these assumptions, $\xi_0$ contains infinitely many 1's almost surely, so that $\P(\mathcal{S}_1) = 1$, so this is indeed a particular case of Theorem \ref{thm:main2}.

Let us explain the organization of the paper. Here is a scheme showing our main intermediate results and the dependence between them:\\

\begin{tikzpicture}
\draw [->] [thick] (2.8,0) --(4,0);
\draw [->] [thick] (4.9,-0.3) --(4.9,-0.7);
\draw [->] [thick] (4.9,-1.7) --(4.9,-1.3);
\draw [->] [thick] (6.8,-1) --(8,-1);
\draw [->] [thick] (10.8,-1) --(12,-0.5);
\draw [->] [thick] (10.8,-1) --(12,-1.5);
\node [right] at (0,0) {Proposition \ref{prop:cone}};
\node [right] at (4,0) {Lemma \ref{lem:cone_finally}};
\node [right] at (4,-2) {Proposition \ref{prop:steer}};
\node [right] at (4,-1) {Proposition \ref{prop:cone0_dual}};
\node [right] at (8,-1) {Proposition \ref{prop:cone0}};
\node [right] at (12,-0.5) {Theorem \ref{thm:main}};
\node [right] at (12,-1.5) {Theorem \ref{thm:main2}};
\end{tikzpicture}\\[.3cm]
The order in which we arrange these results  is somewhat convoluted:
\begin{itemize}
\item In Section \ref{s:prelim}, we introduce basic facts and definitions about the one-type and multitype contact process and their graphical constructions.
\item In Section \ref{s:proofst}, we state Proposition \ref{prop:cone0} and show how it is used (together with some other intermediate lemmas) to prove Theorems \ref{thm:main} and \ref{thm:main2}.
\item In Section \ref{s:ancestor}, we state Proposition \ref{prop:cone0_dual}, which is a modified version of Proposition \ref{prop:cone0}. We then state Proposition \ref{prop:steer} and Lemma \ref{lem:cone_finally} and, using these two results, we prove Proposition \ref{prop:cone0_dual}.
\item In Section \ref{s:times} and the first part of the Appendix, we prove Proposition \ref{prop:steer}.
\item In the second part of the Appendix, we state and prove Proposition \ref{prop:cone}, which implies Lemma \ref{lem:cone_finally}.
\end{itemize}

\section{Preliminaries on the one-type and multitype contact process}\label{s:prelim}
\subsection{One-type contact process}
Fix $d \in \N$, $R \in \N$ and $\lambda > 0$. A \textit{Harris system} for the contact process on $\Z^d$ with range $R$ and rate $\lambda$ is a family
\begin{equation}\label{eq:harris_system}
H = \left(\{D^x: x \in \Z^d\},\;\{D^{x,y}:x,y \in \Z^d,0<\|x-y\|\leq R\}\right),
\end{equation}
where each $D^x$ is a Poisson point process with rate 1 on $[0,\infty)$, each $D^{x,y}$ is a Poisson point process with rate $\lambda$ on $[0,\infty)$, and all these processes are independent (note that $D^{x,y} \neq D^{y,x}$). We view each $D^x$ and each $D^{x,y}$ as a discrete subset of $[0,\infty)$. When we have $t\in D^x$, we say that there is a \textit{death mark} at $(x,t)$; when we have $t \in D^{x,y}$, we say that there is an arrow from $(x,t)$ to $(y,t)$. We denote by $\P$ a probability measure in a probability space in which $H$ is defined.

The way in which a Harris system is used as a \textit{graphical construction} for the contact process is very well known, but let us present it in order to introduce the notation we will use. Points of the Poisson point processes $(D^x)$ and $(D^{x,y})$ are taken as instructions for the two types of transition in the dynamics:
\begin{align*}
&\text{if } t\in D^x,\; \text{ then } \zeta_t = \zeta_{t-}^{0 \to x};\\
&\text{if } t\in D^{x,y} \text{ and } \zeta_{t-}(x) = 1,\; \text{ then } \zeta_t = \zeta_{t-}^{1\to y}.
\end{align*}

In order to see how these rules and the initial configuration $\zeta_0$ determine the value of $\zeta_t(x)$ for any given $t$ and $x$, we use infection paths. Given $H$, an \textit{infection path} is a function $\gamma: I \to \Z^d$, where $I \subset [0,\infty)$ is an interval, satisfying the properties: for each $t \in I$, $t \notin D^{\gamma(t)}$ and $\gamma(t) \neq \gamma(t-)$ implies $t \in D^{\gamma(t-),\gamma(t)}$. This is often described in words as: an infection path may not touch death marks and may traverse arrows. In case $0 \leq s < t$ and there is an infection path $\gamma: [s,t] \to \Z^d$ with $\gamma(s) = x$ and $\gamma(t) = y$, we say that $(x,s)$ and $(y,t)$ \textit{are connected by an infection path}; we represent this with the notation $(x,s) \rsa (y,t)$. By convention, we say $(x,s) \rsa (x,s)$. We then have
$$\zeta_t(x) = \mathds{1}\{\exists y \in \Z^d: \zeta_0(y) = 1 \text{ and } (y,0) \rsa (x,t)\},\quad x\in \Z^d,\; t\geq 0.$$
The following is some additional notation we will use concerning infection paths. Given $A, B \subset \Z^d \times [0,\infty)$, we write $A \rsa B$ if there is an infection path connecting some $(x,s) \in A$ to some $(y,t) \in B$ (here we implicitly assume that $s \leq t$). In case $A = \{(x,s)\}$ (respectively, if $B = (y,t)$), we write $(x,s) \rsa B$ (respectively, $A \rsa (y,t)$) instead of $A \rsa B$. We write $(x,s) \rsa \infty$ if $(x,s) \rsa \Z^d \times \{t\}$ for every $t \geq s$.  We use the symbol $\not \rsa$ to express the negation of any of these statements (e.g. $(x,s) \not \rsa \infty$ if there is some $t$ for which $(x,s) \rsa \Z^d \times \{t\}$ does not hold). Given $\Lambda \subseteq \Z^d$, define
\begin{equation}
\label{eq:definition_death_time}
T^\Lambda = \sup\{t: \Lambda \times \{0\} \rsa \Z^d \times \{t\}\}.
\end{equation}

We will need some well-known estimates that hold in the supercritical regime, $\lambda> \lambda_c(d,R)$. First, there exist $b_1, b_2 > 0$ (depending on $d,R,\lambda$) such that
\begin{equation}\label{eq:1cp_moves_slowly}
\P\left[\exists (z,s) \in \Z^d \times [0,t]:\; \|z\| > x,\; (0,0) \rsa (z,s) \right] < \exp(b_1 t - b_2 x),\quad x > 0,\; t > 0.
\end{equation}
This follows from the proofs of Proposition 1.21 and Lemma 1.22 in Chapter I.1 of \cite{lig99}. Second, Theorem 2.30 in Chapter I.2 of \cite{lig99} states that there are constants $\bar{c}, {\bar{c}_1} > 0$ such that, for any $\Lambda \subset \Z^d$, $\Lambda \neq \varnothing$,
\begin{align}\label{eq:dies_quickly}
&\P\left[t < T^\Lambda < \infty \right] < e^{-\bar{c}t},\quad t > 0 \text{ and }\\[.2cm]
&\P\left[T^\Lambda < \infty \right] < e^{-{\bar{c}_1}\cdot \#\Lambda}.\label{eq:dies_many}
\end{align}
Third, there exists $\bar{c}_2 > 0$ such that
\begin{equation}\label{eq:no_cross}\begin{split}
&t > 0,\; \|x-y\| \leq \sqrt{t}\\& \Longrightarrow \quad \P\left[(x,0) \rsa \Z^d \times \{t\},\; \Z^d \times \{0\} \rsa (y,t),\; (x,0) \not\rsa (y,t) \right] < 1-\exp(-\bar{c}_2t).\end{split}
\end{equation}
This follows from standard arguments using the renormalization construction of Bezuidenhout and Grimmett, see \cite{bezui}. Since we could not find a reference for \eqref{eq:no_cross}, we give a rough sketch of proof. It suffices to prove the statement for $t$ large enough and for $x = 0$ and $y$ with $\|y\| \leq \sqrt{t}$. By the construction of \cite{bezui} and large deviations estimates of \cite{durschon}, there exist $\ell > 0$ and $\alpha > 0$ such that the following holds. Let $s_1 = t/2 -1$ and $s_2 = t/2$. Let $B_1,\ldots, B_N$ be an enumeration of the (disjoint) boxes of the form 
$$k(2\ell + 1)\cdot  e_1 + [-\ell,\ell]^d,\qquad k = \{-\lfloor \alpha t\rfloor, \ldots, \lfloor \alpha t \rfloor\}, $$
where $e_1$ is the first canonical vector of $\Z^d$ (note that the number of boxes, $N$, is of order $t$). Conditionally on $\{(0,0) \rsa \Z^d \times \{s_1\}\}$, with probability larger than $1-e^{-ct}$, we have
$$\#\{n \in \{1,\ldots, N\}: (0,0) \rsa B_n \times \{s_1\}\} \geq \frac{3N}{4}.$$
Conditionally on $\{\Z^d \times \{s_2\} \rsa (y,t)\}$, with probability larger than $1-e^{-ct}$, we have
$$\#\{n \in \{1,\ldots,N\}: B_n \times \{s_2\} \rsa (y,t)\} \geq \frac{3N}{4}.$$
If both these inequalities hold, there exists $I \subseteq \{1,\ldots, N\}$ with $\#I \geq \frac{N}{4}$ such that for each $n \in I$ there are $x_n, y_n \in B_n$ such that $(0,0) \rsa (x_n,s_1)$ and $(y_n,s_2) \rsa (y,t)$. If for some $n \in I$ we also have $(x_n,s_1) \rsa (y_n,s_2)$, we can then guarantee that $(0,0) \rsa (y,t)$. By insisting that the infection path connecting $(x_n,s_1)$ to $(y_n,s_2)$ stays inside $B_n \times [s_1,s_2]$, the availabilities of these infection paths are independent, and hence the number of $n \in I$ for which $(x_n,s_1) \rsa (y_n,s_2)$ dominates a Binomial($\#I,\delta$) random variable, for some $\delta > 0$. The desired statement \eqref{eq:no_cross} then follows from the fact that with high probability, such a binomial random variable is non-zero.

\subsection{Multitype contact process}

We now consider the multitype contact process on $\Z^d$ with range $R$ and rates $\lambda_1 > \lambda_2 > 0$, as given by the Markov pregenerator in \eqref{eq:pregenerator}. This process also admits a graphical construction, which we will represent as an \textit{augmented Harris system}, consisting of a pair $\mathbb{H} = (H,\mathcal{H})$ of two independent collections of Poisson point processes. The collection $H = (\{D^{x,y}\}, \{D^x\})$ is the same collection as the one given in \eqref{eq:harris_system}, with $\lambda$ replaced by $\lambda_2$ everywhere. We will continue referring to points of the sets $D^{x,y}$ as arrows and points of the sets $D^x$ as death marks. The second element of $\mathbb{H}$ is $$\mathcal{H} = \{\mathscr{D}^{x,y}:\; x,y \in \Z^d:\; 0 < \|x-y\| \leq R\},$$ a collection of independent Poisson point processes on $[0,\infty)$ with rate $\lambda_1 -\lambda_2$. We will refer to points of the sets $\mathscr{D}^{x,y}$ as \textit{selective arrows}. These will play the role of birth attempts that are only usable by type 1 individuals (whereas regular arrows are usable by both types). The rules through which these Poisson processes determine the evolution of $(\xi_t)_{t\geq 0}$ are:
\begin{align}
\label{eq:rule_multi1}&\text{if } t \in D^x,\text{ then } \xi_t = \xi_{t-}^{0\to x};\\[.2cm]
\label{eq:rule_multi2}&\text{if } t \in D^{x,y},\; \xi_{t-}(x) = i\text{ and }\xi_{t-}(y) = 0, \text{ then } \xi_t = \xi_{t-}^{i\to y},\; i = 1,2;\\[.2cm]
\label{eq:rule_multi3}&\text{if } t \in \mathscr{D}^{x,y},\; \xi_{t-}(x) = 1 \text{ and }\xi_{t-}(y) = 0,\text{ then }\xi_t = \xi_{t-}^{1 \to y}.
\end{align}
In the rest of the paper, we will assume that the dimension $d$, the range $R$ and the rates $\lambda_1, \lambda_2$ are fixed and define an augmented Harris system $\mathbb{H}$ from which the multitype contact process is defined. We will denote the probability measure in this probability space again by $\mathbb{P}$.

Since Theorem \ref{thm:main} assumes that $\lambda_1 > \lambda_c(d,R)$ and the statement of Theorem \ref{thm:main2} is trivial in case $\lambda_1 \leq \lambda_c(d,R)$, we adopt the following:\\[.2cm]
\noindent \textbf{Global assumption.} \textit{We always assume that $\lambda_1 > \lambda_2 > 0$ and that $\lambda_1 > \lambda_c(d,R)$. }\\[.2cm]
For many of the statements we make, it will be sufficient to give a proof under the more restrictive assumption that $\lambda_1 > \lambda_2 > \lambda_c$. Under this assumption, the `basic' Harris system $H$ already corresponds to a supercritical contact process. Although our assumptions on $\lambda_2$ will be stated explicitly, let us already mention here that from Section \ref{s:ancestor} onward, we assume that $\lambda_1 > \lambda_2 > \lambda_c$.

The notion of infection path introduced in the previous subsection will still be used here, but we now make a distinction between \textit{basic infection paths}  and \textit{selective infection paths}. \begin{definition} Basic infection paths (BIP's) are just the infection paths defined from $H$ as in the previous subsection; very importantly, their definition does not involve $\mathcal{H}$. Selective infection paths (SIP's) are defined as BIP's, with the difference that, in addition to the arrows (from $H$), they are also allowed to use the selective arrows (from $\mathcal{H}$). In other words, given an augmented Harris system $\mathbb{H} = (H,\mathcal{H})$, a selective infection path of $\mathbb{H}$ is a function $\gamma:[t_1,t_2]\to \Z^d$, where $0 \leq t_1 < t_2 \leq \infty$, so that
\begin{itemize}
\item $t \neq D^{\gamma(t)}$ for all $t$;
\item $\gamma(t) \neq \gamma(t-)$ implies $t \in D^{\gamma(t-),\gamma(t)} \cup \mathscr{D}^{\gamma(t-),\gamma(t)}$.
\end{itemize}\end{definition}
Of course, every BIP is also an SIP.

As before, the notation $(x,t_1) \rsa (y,t_2)$ indicates that there is a basic infection path from $(x,t_1)$ to $(y,t_2)$; \textit{we emphasize that this event involves $H$ but not $\cal{H}$}. The same goes for other types of events involving the symbol `$\rsa$', such as $A\times \{t_1\} \rsa (x,t_2)$, $(x,t_1) \rsa A \times \{t_2\}$, $(x,t) \rsa \infty$ etc. We will not employ any analogous notation to indicate that there is a selective infection path from one space-time point to another. The random variables $T^\Lambda$ from \eqref{eq:definition_death_time} are defined here in the same way, \text{making use of basic infection paths only}, and have no relation to $\cal{H}$.

Some simple consequences of the rules \eqref{eq:rule_multi1}-\eqref{eq:rule_multi3} are given by the following.
\begin{lemma}\label{lem:first_prop_paths} \textbf{(First properties of BIP's and SIP's)}  For any $t \geq 0$,
\begin{align}\label{eq:set_2_contained}
&\{x: \xi_t(x) = 2\} \subset \{x: \exists y \text{ with } \xi_0(y) = 2 \text{ and } (y,0) \rsa (x,t)\},\\[.2cm]
\label{eq:set_1_contained}&\{x: \xi_t(x) = 1\} \subset \{x: \exists y \text{ with } \xi_0(y) = 1 \text{ and there is an SIP from $(y,0)$ to $(x,t)$}\},\\[.2cm]
& \left\{\begin{array}{c} x: \exists y \text{ with } \xi_0(y) \neq 0\\ \text{ and }(y,0) \rsa (x,t)\end{array}\right\} \subseteq \{x: \xi_t(x) \neq 0\}  \subseteq \left\{\begin{array}{c}x: \exists y \text{ with } \xi_0(y) \neq 0\\ \text{ and there is an SIP}\\\text{ from $(y,0)$ to $(x,t)$} \end{array}\right\}.\label{eq:larger_for_zero} 
\end{align}
\end{lemma}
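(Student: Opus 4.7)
The plan is to verify each of the four inclusions directly from the transition rules \eqref{eq:rule_multi1}--\eqref{eq:rule_multi3} and the definitions of BIP and SIP, using the fact that only finitely many Poisson events of $\mathbb{H}$ touch any bounded space-time region almost surely, so all the inductions below terminate.

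For \eqref{eq:set_2_contained} I would trace back in time from a site $x$ with $\xi_t(x)=2$. Let $s^\ast=\inf\{s\in[0,t]:\xi_u(x)=2 \text{ for every } u\in[s,t]\}$. If $s^\ast=0$, then $\xi_0(x)=2$ and the constant map $s\mapsto x$ is a BIP from $(x,0)$ to $(x,t)$. Otherwise $\xi_{s^\ast-}(x)=0$, no death mark at $x$ occurs on $(s^\ast,t]$, and scanning \eqref{eq:rule_multi1}--\eqref{eq:rule_multi3} shows that the only way to create a $2$ at $x$ is via \eqref{eq:rule_multi2} with $i=2$; hence $s^\ast\in D^{y,x}$ for some $y$ with $\xi_{s^\ast-}(y)=2$. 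Repeating the argument at $(y,s^\ast-)$ and concatenating, the recursion terminates at some $(y_0,0)$ with $\xi_0(y_0)=2$; since no selective arrow is ever used, the resulting path is a BIP.

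The proof of \eqref{eq:set_1_contained} is the same, except that a $1$ at $x$ may appear either by \eqref{eq:rule_multi2} with $i=1$ (using a regular arrow in $D^{y,x}$) or by \eqref{eq:rule_multi3} (using a selective arrow in $\mathscr{D}^{y,x}$), so the trace-back is allowed to traverse selective arrows and the resulting path is an SIP. The right inclusion in \eqref{eq:larger_for_zero} follows from the same backward construction, now tracking only the property ``nonzero'' so that a birth of either species is an admissible predecessor at each step.

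It remains to prove the left inclusion in \eqref{eq:larger_for_zero}. Fix a BIP $\gamma:[0,t]\to\Z^d$ with $\xi_0(\gamma(0))\neq 0$; by induction on the (finitely many) jump times of $\gamma$ I would show that $\xi_s(\gamma(s))\neq 0$ for every $s\in[0,t]$. Between consecutive jumps of $\gamma$, the site $\gamma(s)$ is constant and carries no death mark by the BIP property, so its $\xi$-value cannot drop to $0$; incoming regular or selective arrows at that site only raise a $0$ to some nonzero value, so the inductive hypothesis is preserved. At a jump time $t'$ with $\gamma(t'-)=z$, $\gamma(t')=w$, $t'\in D^{z,w}$, the inductive hypothesis gives $\xi_{t'-}(z)\neq 0$, and rule \eqref{eq:rule_multi2} yields $\xi_{t'}(w)\neq 0$ regardless of whether $\xi_{t'-}(w)$ was zero (in the former case $\xi_{t'}(w)=\xi_{t'-}(z)$, in the latter $\xi_{t'}(w)=\xi_{t'-}(w)$). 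In particular $\xi_t(x)\neq 0$. The only (minor) obstacle in the whole argument is the continuous-time bookkeeping needed to ensure the constructed paths avoid death marks at their active sites and that every jump coincides with an available arrow of the required type, which is settled by the exhaustive case analysis of the three transition rules.
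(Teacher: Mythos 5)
Your proof is correct and follows essentially the same route as the paper's: a backward trace through the transition rules \eqref{eq:rule_multi1}--\eqref{eq:rule_multi3} for the forward inclusions, and an induction over the jump times of a fixed BIP for the left inclusion in \eqref{eq:larger_for_zero}. The only cosmetic differences are that the paper deduces the right inclusion of \eqref{eq:larger_for_zero} directly from \eqref{eq:set_2_contained}, \eqref{eq:set_1_contained} and the fact that every BIP is an SIP rather than re-running the trace-back, and it justifies termination of the recursion slightly more carefully, first showing via \eqref{eq:1cp_moves_slowly} and time reversal that almost surely no path from outside a bounded spatial box reaches $(x,t)$ before invoking local finiteness of the Poisson processes.
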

Note that the above inclusions do not allow one to fully determine the  state of the multitype contact process at a given time from $\xi_0$ and $\mathbb{H}$. Although it is possible to give such a characterization by introducing some more classes of paths, we will not need to do so.

\begin{proof}[Proof of Lemma \ref{lem:first_prop_paths}]
We start noting that, for any $(x,t) \in \Z^d \times [0,\infty)$, almost surely there exists $N = N(x,t)$ such that any (selective) infection path started anywhere in $\Z^d \times [0,t]$ and ending at $(x,t)$ has at most $N$ jumps. To see this, we observe that almost surely there exists $M = M(x,t)$ such that no (selective) infection path started outside $[x-M,x+M] \times [0,t]$ reaches $(x,t)$ (this can be shown using bound \eqref{eq:1cp_moves_slowly} and a time reversal argument; we omit the details). Next, note that the total number of points of all Poisson point processes (death marks, arrows, selective arrows) corresponding to sites or pairs of sites in $[x-M,x+M]$ and in the time interval $[0,t]$ is finite. This number is an upper bound for the number of jumps of any (selective) path to $(x,t)$.

Now, let us prove \eqref{eq:set_2_contained}. Fix $(x,t)$ such that $\xi_t(x) =2$. Define $$t_1 = \inf\{s \in [0,t]: \xi_s(x) = 2 \text{ on } [s,t]\}.$$
If $t_1= 0$, then we have $\xi_0(x) = 2$ and a BIP from $(x,0)$ to $(x,t)$ is given by $\gamma(s) = x$, $0 \leq s \leq t$. If $t_1 > 0$, then $\xi_{t_1-}(x) = 0$ and there exists $x_1 \in \Z^d$ with $0 < \|x-x_1\| \leq R$ such that $\xi_{t_1}(x_1) = 2$ and there is an arrow from $(x_1,t_1)$ to $(x,t_1)$. Then let 
$$t_2 =\inf\{s \in [0,t_1]: \xi_s(x_1) = 2 \text{ on } [s,t_1]\}.$$
In case $t_2 = 0$, then $\xi_0(x_1) = 2$ and a BIP from $(x_1,0)$ to $(x,t)$ is given by $\gamma = x_1 \cdot \mathds{1}_{[0,t_1)} + x \cdot \mathds{1}_{[t_1,t]}$. Otherwise we continue in this manner, defining $x_2$ and $t_3$ and so on; eventually the procedure must end with some $k$ such that $t_k = 0$ and $\xi_0(x_k) = 2$, otherwise we would obtain BIP's to $(x,t)$ with arbitrarily many jumps. The proof of \eqref{eq:set_1_contained} is the same. The second inclusion in \eqref{eq:larger_for_zero} follows from \eqref{eq:set_2_contained}, \eqref{eq:set_1_contained} and the fact that every BIP is an SIP.

The first inclusion in \eqref{eq:larger_for_zero} is easy to prove. Fix $(x,t)$ such that there is some $y$ with $\xi_0(y) \neq 0$ and $(y,0) \rsa (x,t)$. Fix a BIP from $(y,0)$ to $(x,t)$ and let $0< t_1< t_2<\cdots< t_k$ be the successive jump times of this path. It is then seen by induction that $\xi_{t_i}(\gamma(t_i)) \neq 0$ for each $i$ (note however that we could have $\xi_{t_i}(\gamma(t_i)) \neq \xi_0(y)$). It then follows that $\xi_t(x) \neq 0$.
\end{proof}

\begin{definition} A \emph{free basic infection path} (FBIP) is a basic infection path $\gamma:[t_1,t_2] \to \Z^d$ satisfying
\begin{equation}\label{eq:def_of_free}
s \in [t_1,t_2],\;\gamma(s) \neq \gamma(s-) \quad \Longrightarrow \quad \Z^d \times \{t_1\} \not \rsa (\gamma(s), s-).
\end{equation}
A \emph{free selective infection path} (FSIP) is a selective infection path satisfying \eqref{eq:def_of_free}. \end{definition}
Note that any FBIP is an FSIP. FBIP's satisfy the following important property. 
\begin{lemma}
\label{lem:unique_FBIP} \textbf{(Uniqueness property of FBIP's)}
For any $x \in \Z^d$ and $0 \leq s < t$, we either have $\Z^d \times \{s\} \not \rsa (x,t)$ or there is a unique FBIP from $\Z^d \times \{s\}$ to $(x,t)$.
\end{lemma}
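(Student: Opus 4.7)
My approach is to induct on the number of jumps of the FBIP; any basic infection path ending at $(x,t)$ has a.s. finitely many jumps, by the argument already used in the proof of Lemma \ref{lem:first_prop_paths}. I also condition on the probability-one event that no two of the independent Poisson processes in $H$ share a common point, so each arrow time is attached to a unique source--target pair.

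\emph{Uniqueness.} Let $\gamma_1, \gamma_2 \colon [s,t] \to \Z^d$ be two FBIPs ending at $(x,t)$, and let $\tau_i$ denote the last jump time of $\gamma_i$ (or $s$ if there is none). Without loss of generality $\tau_1 \leq \tau_2$. If $\tau_1 < \tau_2$, then $\gamma_1 \equiv x$ on $[\tau_1,t]$, so the restriction of $\gamma_1$ to $[s,\tau_2-\varepsilon]$ witnesses $\Z^d \times \{s\} \rsa (x,\tau_2-)$, contradicting the FBIP defining condition at the jump of $\gamma_2$ at $\tau_2$. So $\tau_1 = \tau_2 =: \tau$. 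If $\tau > s$, the uniqueness of the source $y$ of the arrow at time $\tau$ into $x$ forces $\gamma_1(\tau-) = \gamma_2(\tau-) = y$; restricting each path to $[s,\tau)$ (and taking the value $y$ at $\tau$) yields two FBIPs ending at the common space--time point $(y,\tau)$ with strictly fewer jumps each, so by induction they coincide, and combined with $\gamma_1 \equiv \gamma_2 \equiv x$ on $[\tau,t]$ we obtain $\gamma_1 = \gamma_2$. If $\tau = s$, both paths are identically $x$ and there is nothing to prove.

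\emph{Existence.} Assume $\Z^d \times \{s\} \rsa (x,t)$. For each $z \in \Z^d$, the map $r \mapsto \one\{\Z^d \times \{s\} \rsa (z,r)\}$ is piecewise constant on $[s,t]$, with upward transitions occurring exactly at arrow times $r \in D^{y,z}$ for which $\Z^d \times \{s\} \rsa (y,r-)$, and downward transitions occurring exactly at death marks $r \in D^z$; this structural fact is a direct consequence of the definition of basic infection paths, together with the absence of Poisson coincidences. Let $\tau$ be the left endpoint of the maximal $1$-interval of this map (for $z=x$) that contains $t$. If $\tau = s$, the constant path $\gamma \equiv x$ is an FBIP (its validity as a BIP is guaranteed by the absence of any death mark at $x$ in $[s,t]$, which would otherwise break the interval). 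If $\tau > s$, let $y$ be the unique site with $\tau \in D^{y,x}$; then $\Z^d \times \{s\} \rsa (y,\tau-)$ (since $R(y,\tau-) = 1$), and by maximality $\Z^d \times \{s\} \not\rsa (x,\tau-)$. Apply the construction recursively to obtain an FBIP $\gamma'$ from $\Z^d \times \{s\}$ to $(y,\tau)$, and set $\gamma = \gamma'$ on $[s,\tau)$, $\gamma \equiv x$ on $[\tau,t]$. The new jump at $\tau$ is an FBIP jump by construction, and the no-death-mark condition on $[\tau,t]$ at $x$ holds by choice of $\tau$. The recursion terminates in finitely many steps since $\tau$ strictly decreases at each step and only finitely many arrow times are relevant to $(x,t)$.

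The key structural step that both parts rest on is the piecewise-constant description of $r \mapsto \one\{\Z^d \times \{s\} \rsa (z,r)\}$ with transitions of the stated form; once this is in hand, the uniqueness argument (comparing last jumps) and the existence argument (backward recursion through arrow times) are each short and mechanical. The only care needed is in the endpoint conventions---treating the $1$-intervals as left-closed, right-open, and handling the gluing at the jump time $\tau$ so that the FBIP property is preserved.
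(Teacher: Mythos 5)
Your proof is correct, but it follows a genuinely different route from the paper, which in fact does not prove the lemma in full: it cites Lemma 2.4 of \cite{interface} and only sketches the existence half, via an iterative repair procedure (start from an arbitrary BIP to $(x,t)$, locate the \emph{largest} jump time violating the freeness condition, splice in a BIP reaching the post-jump site just before that time, and iterate; termination follows because the successive BIP's are distinct and there are only finitely many). You instead build the FBIP forward-deterministically by tracing back through the left endpoints of the maximal $1$-intervals of $r \mapsto \mathds{1}\{\Z^d\times\{s\} \rsa (z,r)\}$, and you supply a separate uniqueness argument by induction on the number of jumps, comparing last jump times. Both halves of your argument check out: the piecewise-constant structure of the reachability indicator with upward transitions only at arrows from reachable sources and downward transitions only at death marks is indeed immediate from the definitions (modulo the a.s.\ absence of Poisson coincidences, which you correctly set aside at the outset), the maximality of the $1$-interval is exactly the freeness condition at the new jump, and the fact that $\Z^d\times\{s\} \rsa (y,\tau-)$ rules out a jump of the recursively obtained path $\gamma'$ at $\tau$, so the concatenation is a legitimate BIP. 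What your approach buys is a self-contained proof of \emph{both} existence and uniqueness (your construction moreover makes the uniqueness transparent, since each jump time and source is forced), whereas the paper's repair procedure is shorter to state but yields only existence and leans on the external reference for uniqueness. Two cosmetic points: your termination claim ``$\tau$ strictly decreases at each step'' is off by one at the first step in the degenerate event that an arrow into $x$ lands exactly at time $t$ (then the first $\tau$ equals $t$); strict decrease holds from the second step on because $R(y,\tau-)=1$ forces the next left endpoint below $\tau$, which is all you need. And you use the symbol $R(\cdot,\cdot)$ without formally introducing it, though its meaning is clear from the displayed map.
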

This is proved in \cite{interface} (Lemma 2.4 in that paper), but let us present the idea of how to find the unique FBIP mentioned in the lemma. Finding it will be useful to understand some of the illustrative figures that appear in the rest of the paper. Assume $\Z^d \times \{s\} \rsa (x,t)$ and fix an arbitrary BIP $\upgamma: [s,t] \to \Z^d$ with $\upgamma(t) = x$. In case $\upgamma$ is not an FBIP, let $r$ be the largest time at which there is a jump violating the FBIP property, that is, so that $\upgamma(r-) \neq \upgamma(r)$ and $\Z^d \times \{s\} \rsa (\upgamma(r), r-)$. Then, there exists a BIP $\hat{\upgamma}: [s,r] \to \Z^d$ such that $\hat{\upgamma}(r-) = \hat{\upgamma}(r) =  \upgamma(r)$. Now, define a new BIP $\upgamma_1: [s,t] \to \Z^d$ by setting $\upgamma_1 = \hat{\upgamma} \cdot \mathds{1}_{[s,r)} + \upgamma \cdot \mathds{1}_{[r,t]}$. If $\upgamma_1$ is an FBIP, we are done. Otherwise, let $r_1$ be the largest time at which $\upgamma_1$ violates the FBIP property; we then have $r_1 <r $. We then repeat the above procedure, modifying $\upgamma_1$ in the same way we modified $\upgamma$, hence obtaining $\upgamma_2$, and then proceeding similarly to obtain $\upgamma_3$,$\upgamma_4$ etc. This procedure must eventually end at an FBIP because the BIP's in the sequence $\upgamma_1,\upgamma_2,\ldots$ are all distinct and there are only finitely many BIP's from $\Z^d \times \{s\}$ to $(x,t)$. 

We complement the list of facts in Lemma \ref{lem:first_prop_paths} with the following. Since the proof is very similar to that of \eqref{eq:larger_for_zero}, we omit it.
\begin{lemma} \textbf{(FSIP's carry 1's)} For any $t$,
\begin{equation}\label{eq:cond_ifsip}
\{x: \xi_t(x) = 1\} \supseteq \{x:\exists y \text{ with } \xi_0(y) = 1 \text{ and there is an FSIP from $(y,0)$ to $(x,t)$}\}.
\end{equation}
\end{lemma}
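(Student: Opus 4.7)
The plan is to mimic the induction used for the first inclusion in \eqref{eq:larger_for_zero}, inserting the freeness condition at the single place where a problem would otherwise arise. Fix $(x,t)$ lying in the right-hand side of \eqref{eq:cond_ifsip}, and let $\gamma:[0,t]\to\Z^d$ be a corresponding FSIP with $\gamma(0)=y$, $\xi_0(y)=1$ and $\gamma(t)=x$. Write the (almost surely finitely many) jump times of $\gamma$ as $0<t_1<\cdots<t_k\le t$, set $t_0:=0$, $z_0:=y$, $z_i:=\gamma(t_i)$ for $1\le i\le k$, and $t_{k+1}:=+\infty$. I will show by induction on $i\in\{0,1,\ldots,k\}$ that $\xi_s(z_i)=1$ for all $s\in[t_i,t_{i+1})\cap[0,t]$; specializing to $i=k$ and $s=t$ then yields $\xi_t(x)=1$.

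For the base case $i=0$, $\xi_0(y)=1$ by assumption; on $[0,t_1)$ the path $\gamma$ stays at $y$ and meets no death mark there, and the update rules \eqref{eq:rule_multi2}--\eqref{eq:rule_multi3} leave an already-occupied site unchanged, so $\xi_s(y)=1$ throughout $[0,t_1)$. For the inductive step, the preceding case gives $\xi_{t_i-}(z_{i-1})=1$. At time $t_i$ the FSIP uses either a basic or a selective arrow from $z_{i-1}$ to $z_i$, and the critical observation---the only point at which the FSIP hypothesis enters---is that $\xi_{t_i-}(z_i)\ne 2$. Indeed, $\xi_{t_i-}(z_i)=2$ would, by \eqref{eq:set_2_contained}, produce some $y'$ with $\xi_0(y')=2$ and $(y',0)\rsa(z_i,t_i-)$, contradicting the defining condition \eqref{eq:def_of_free} of an FSIP at time $t_i$. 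Hence $\xi_{t_i-}(z_i)\in\{0,1\}$: if it equals $1$, the arrow at $t_i$ has no effect on $z_i$ and $\xi_{t_i}(z_i)=1$; if it equals $0$, rule \eqref{eq:rule_multi2} or \eqref{eq:rule_multi3}, combined with $\xi_{t_i-}(z_{i-1})=1$, forces $\xi_{t_i}(z_i)=1$. The same persistence argument used in the base case then propagates this $1$ across $[t_i,t_{i+1})\cap[0,t]$.

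There is no substantive obstacle here, which is presumably why the authors omit the proof. The freeness condition \eqref{eq:def_of_free} has been engineered to do exactly the work needed at one spot of the induction, namely to forbid a $2$ from pre-occupying the target site of a jump. In contrast to the second inclusion of \eqref{eq:larger_for_zero}, which yields only an SIP and hence the weaker ``non-zero'' conclusion, here the stronger hypothesis ``there exists an FSIP'' is precisely what buys the stronger conclusion ``the site is a $1$''.
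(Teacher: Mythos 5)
Your proof is correct and is exactly the argument the paper has in mind: the paper omits the proof, pointing to the induction along jump times used for the first inclusion of \eqref{eq:larger_for_zero}, and you carry out that induction while correctly identifying the one place where the freeness condition \eqref{eq:def_of_free} is needed, namely to exclude a $2$ at the target site of a jump via \eqref{eq:set_2_contained}.
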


\begin{lemma}\label{lem:concatenation} \textbf{(Concatenation)}
If $\gamma_1: [t_1,t_2] \to \Z^d$ and $\gamma_2: [t_2,t_3] \to \Z^d$ are FSIP's  with $\gamma_1(t_2) = \gamma_2(t_2)$, then the path $\gamma: [t_1,t_3] \to \Z^d$ defined by $\gamma(t) = \gamma_1(t)$ for $t \in [t_1,t_2]$ and $\gamma(t) = \gamma_2(t)$ for $t \in [t_2,t_3]$ is also an FSIP. Moreover, if $\gamma_1$ and $\gamma_2$ are FBIP's, then $\gamma$ is a FBIP.
\end{lemma}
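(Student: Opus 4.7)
The plan is to verify the two defining properties of an FSIP for the concatenated path $\gamma$ separately: first that it is an SIP, and then that it satisfies the freeness condition \eqref{eq:def_of_free}.

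First I would check that $\gamma$ is an SIP on $[t_1,t_3]$. The two conditions in the definition (absence of death marks on the graph of $\gamma$, and every jump lying along a regular or selective arrow) are local in time. They hold on $[t_1,t_2]$ since $\gamma=\gamma_1$ there and on $[t_2,t_3]$ since $\gamma=\gamma_2$ there, and consistency at the junction is guaranteed by the matching assumption $\gamma_1(t_2)=\gamma_2(t_2)$. Any jump occurring exactly at $t_2$ is simply a jump of $\gamma_1$ at the right endpoint of its domain, and is taken care of by $\gamma_1$ being an SIP.

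Next I would fix a jump time $s\in(t_1,t_3]$ of $\gamma$ and establish $\Z^d\times\{t_1\}\not\rsa(\gamma(s),s-)$ by splitting into two cases. If $s\le t_2$, then $s$ is equally a jump time of $\gamma_1$ with $\gamma(s)=\gamma_1(s)$ and $\gamma(s-)=\gamma_1(s-)$, so the FSIP property of $\gamma_1$ directly yields the conclusion. If $s>t_2$, then $s$ is a jump time of $\gamma_2$ and $\gamma(s)=\gamma_2(s)$. I would argue by contradiction: if some BIP $\delta:[t_1,s-]\to\Z^d$ ended at $(\gamma(s),s-)$, then its restriction $\delta|_{[t_2,s-]}$ would still be a BIP (the BIP conditions are stable under restriction to subintervals), witnessing the connection $\Z^d\times\{t_2\}\rsa(\gamma(s),s-)$, which contradicts the FSIP property of $\gamma_2$ at its jump time $s$.

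The moreover claim follows by the same argument verbatim, using that the concatenation of two BIPs introduces no selective arrows and is therefore itself a BIP. I do not anticipate any genuine obstacle; the only mildly delicate point is keeping careful track of the interpretation of $s-$ and of the behavior at the junction $t_2$, which is routine. The heart of the argument is simply the observation that a BIP witnessing $\Z^d\times\{t_1\}\rsa(\gamma(s),s-)$ can always be truncated to a BIP witnessing $\Z^d\times\{t_2\}\rsa(\gamma(s),s-)$, which transfers the freeness of $\gamma_2$ into the freeness of $\gamma$ on the second half of its domain.
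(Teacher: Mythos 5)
Your proposal is correct and follows essentially the same route as the paper: check the freeness condition at each jump time by cases, using the FSIP property of $\gamma_1$ on $[t_1,t_2]$ and, on $(t_2,t_3]$, the fact that a BIP from $\Z^d\times\{t_1\}$ to $(\gamma(s),s-)$ restricts to one from $\Z^d\times\{t_2\}$, contradicting the freeness of $\gamma_2$. The paper leaves the SIP verification and the truncation step implicit, so your write-up is just a more explicit version of the same argument.
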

\begin{proof}
Assume $\gamma(t) \neq \gamma(t-)$ for some $t$. If $t \in [t_1,t_2]$, then $\Z^d \times \{t_1\} \not \rsa (\gamma(t), t-)$ since $\gamma_1$ is an FSIP. If $t \in [t_2,t_3]$, then $\Z^d \times \{t_2\} \not \rsa (\gamma(t),t-)$ since $\gamma_2$ is an FSIP, so $\Z^d \times \{t_1\} \not \rsa (\gamma(t),t-)$. The second statement is evident.
\end{proof}

\section{Proof of Theorems \ref{thm:main} and \ref{thm:main2}}\label{s:proofst}

We will now state a key result about infection paths that will allow us to prove our main results. Before doing so, let us introduce some notation for subsets of $\Z^d$ and of $\Z^d \times [0,\infty)$.
\begin{definition}\label{def:spacetime} Define the sets
\begin{align*}
&B_x(r) = \{y \in \Z^d:\|y-x\| \leq r\},\quad x \in \Z^d,\; r \geq 0;\\
&\mathscr{R}(x,t,\ell) = B_x(\ell)\times [t-\ell,t],\quad x \in \Z^d,\; t \geq 0,\;\ell \in [0,t];\\
&\mathscr{C}(x,t,\alpha) = \{(y,s) \in \Z^d \times [t,\infty): \|y-x\| \leq  \alpha (s-t)\},\quad x \in \Z^d,\; t,\alpha  \geq 0.
\end{align*}
\end{definition}
\begin{proposition}\label{prop:cone0}
Assume $\lambda_1 > \lambda_2 > \lambda_c$. There exists $\bar{c} > 0$ and $\bar{\beta} > 0$ such that the following holds. For any $s> r> \ell >0$ and $x,y \in \Z^d$ with $(x,s) \in \mathscr{C}(y,r,\bbe)$, we have
$$\P\left[\left\{\Z^d \times \{0\} \not \rsa (x,s) \right\} \cup \left\{\begin{array}{r} \exists (y',r') \in \mathscr{R}(y,r,\ell): \Z^d \times \{0\} \rsa (y',r')\\[.2cm] \text{and $\exists$ an FSIP from $(y',r')$ to $(x,s)$} \end{array} \right\}\right] > 1-\exp(-\bar{c}\ell). $$
\end{proposition}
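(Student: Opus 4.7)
The plan is to deduce Proposition \ref{prop:cone0} from its dual formulation (Proposition \ref{prop:cone0_dual}), using the time-reversal symmetry of the augmented Harris system. Fixing $s$ as the time horizon, one reflects the time axis via $t \mapsto s-t$ on $[0,s]$ and simultaneously reverses the direction of every arrow (so $D^{x,y}$ becomes $D^{y,x}$) and every selective arrow. The resulting augmented Harris system has the same distribution as $\mathbb{H}$ on $[0,s]$, because the Poisson point processes of deaths and of (selective) arrows are invariant in law under this joint reflection-reversal. Forward infection paths of the original system correspond bijectively to backward infection paths of the reversed system, and events about the former translate to events about the latter with the same probabilities.

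Applied to our situation, the event $\{\Z^d \times \{0\} \rsa (x,s)\}$ translates to $\{(x,0) \rsa \Z^d \times \{s\}\}$ in the reversed system, the spacetime box $\mathscr{R}(y,r,\ell) = B_y(\ell) \times [r-\ell, r]$ translates to $B_y(\ell) \times [s-r, s-r+\ell]$, and the existence of a forward FSIP from $(y',r')$ to $(x,s)$ translates to the existence of a backward FSIP from $(x,0)$ to $(y', s-r')$, now with a dual notion of ``free'' asserting that every jump point cannot reach $\Z^d \times \{s\}$ forward in time. The cone condition $(x,s)\in \mathscr{C}(y,r,\bar\beta)$ is symmetric under the reversal in the sense that it becomes the condition that the reversed target lies within a backward cone of slope $\bar\beta$ from $(x,0)$. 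After this dictionary is set up, the proposition becomes precisely the content of Proposition \ref{prop:cone0_dual} (which the author describes as a ``modified version'' of Proposition \ref{prop:cone0} for exactly this reason), so the probability bound transfers verbatim with the same constants $\bar c,\bar\beta$.

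The principal obstacle will be the careful bookkeeping of the freeness condition under reversal: an FSIP is defined by a past-facing constraint (no BIP from the initial time reaches the jump point), and its image under time reversal is inevitably future-facing (no BIP from the jump point reaches the terminal time). One must verify that Proposition \ref{prop:cone0_dual} is formulated with precisely the matching dual condition and that the spatial/temporal boxes align under the reflection. This is a routine but delicate check using the definitions in Section \ref{s:prelim}, the uniqueness property of FBIPs (Lemma \ref{lem:unique_FBIP}), and the symmetry of $\mathbb{H}$ under reversal. Once the correspondence is in place, no further probabilistic input is required at this stage: the substantive content of the argument, namely the construction of the desired (dual) FSIP out of an available backward BIP via steering, lives entirely inside the proof of Proposition \ref{prop:cone0_dual}, and ultimately rests on the steering estimate Proposition \ref{prop:steer} together with the cone-containment Lemma \ref{lem:cone_finally}.
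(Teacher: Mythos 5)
Your proposal is correct and is essentially identical to the paper's own argument: the paper proves Proposition \ref{prop:cone0} exactly by reducing it, via the time-reversed system $\mathbb{H}^*_{[0,u]}$ (with arrows and selective arrows reversed), to Proposition \ref{prop:cone0_dual}, under the dictionary FSIP $\leftrightarrow$ RFSIP, $\mathscr{R} \leftrightarrow \mathscr{R}'$, $\mathscr{C} \leftrightarrow \mathscr{C}'$ that you describe, and then proves the dual statement from Proposition \ref{prop:steer} and Lemma \ref{lem:cone_finally}. (One small point: under reversal the freeness constraint refers to reaching the \emph{terminal time $t'$ of the path}, not $\Z^d\times\{s\}$, as you correctly state in your closing paragraph.)
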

See Figure \ref{fig:pair0} for a representation of the second event inside the probability.

\begin{figure}[htb]
\begin{center}
\setlength\fboxsep{0pt}
\setlength\fboxrule{0.5pt}
\fbox{\includegraphics[width = 0.5\textwidth]{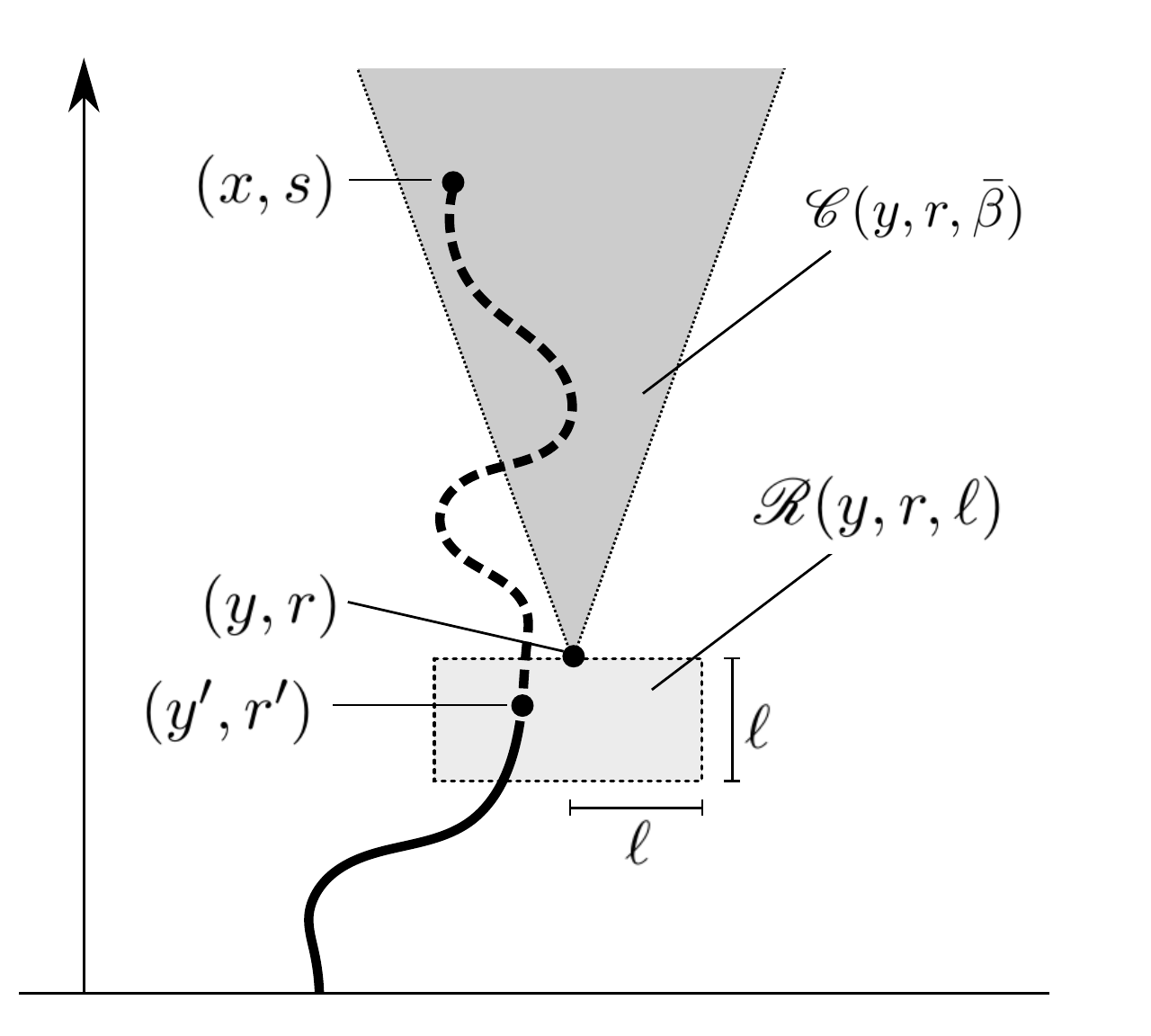}}
\end{center}
\caption{The event inside the conditional probability in Proposition \ref{prop:cone0} (in the $d = 1$ case). The thick black path is a basic infection path from some point in $\Z \times \{0\}$ to $(y',r')$. The dashed thick black path is a free selective infection path from $(y', r')$ to $(x,s)$. }
\label{fig:pair0}
\end{figure}

The proof of Proposition \ref{prop:cone0} will be carried out in stages in Sections \ref{s:ancestor}, \ref{s:times} and the Appendix. In the remainder of this section, we show how this proposition is used to prove our main theorems. 

We let $\bbe$ be as in Proposition \ref{prop:cone0} and define
\begin{equation*}
\bbe_k = \bbe\cdot 2^{-k},\; k \geq 1.
\end{equation*}

\begin{lemma}\label{lem:cone_integers}
For all $\varepsilon > 0$ there exists $m > 0$ such that, if $\xi_0 \equiv 1$ on $B_0(m)$, then 
\begin{equation}\label{eq:want_lemma_cone2}\P[\xi_t(x) \neq 2 \text{ for all $(x,t) \in \mathscr{C}(0,0,\bbe_1)$}] > 1-\varepsilon. \end{equation}
\end{lemma}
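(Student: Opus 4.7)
The plan is a time-doubling induction with Proposition~\ref{prop:cone0} as the propagation tool. It is convenient to first reduce to the worst case $\xi_0 \equiv 2$ on $\Z^d \setminus B_0(m)$, since swapping $0$'s for $2$'s outside $B_0(m)$ only makes the event $\{\exists (x,t) \in \mathscr{C}(0,0,\bbe_1) : \xi_t(x) = 2\}$ more likely, by a standard site-by-site coupling on the augmented Harris system. Under this reduction every site of $\Z^d$ has a nonzero initial value, a fact which will be crucial below.

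Define $A(T) := \{\xi_t(x) \neq 2 \text{ for all } (x,t) \in \mathscr{C}(0,0,\bbe_1) \text{ with } t \leq T\}$. For the base case, pick a threshold $T_0$: by \eqref{eq:set_2_contained}, $\xi_t(x) = 2$ forces a BIP from some $(y,0)$ with $\|y\| > m$ to $(x,t)$, so a union bound based on \eqref{eq:1cp_moves_slowly} (and its time reversal) over such $y$ and over $(x,t) \in \mathscr{C}(0,0,\bbe_1) \cap (\Z^d \times [0,T_0])$ gives $\P(A(T_0)^c) \leq C_d \exp(b_1 T_0 + b_2 \bbe_1 T_0 - b_2 m/2) < \varepsilon/2$ once $m$ is chosen large relative to $T_0$.

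For the inductive step, fix $(x,t) \in \mathscr{C}(0,0,\bbe_1)$ with $t \in (T, 2T]$ and apply Proposition~\ref{prop:cone0} with $(y,r) = (0, t/2)$ and $\ell = \alpha t$, where $\alpha := \bbe/(4+2\bbe)$. This choice of $\alpha$ is such that every $(y',r') \in \mathscr{R}(0, t/2, \alpha t)$ satisfies $\|y'\| \leq \alpha t \leq \bbe_1(t/2-\alpha t) \leq \bbe_1 r'$, hence $(y',r') \in \mathscr{C}(0,0,\bbe_1)$ with $r' \leq T$. On the good event from the proposition, either $\Z^d \times \{0\} \not\rsa (x,t)$ --- in which case the contrapositive of \eqref{eq:set_2_contained} gives $\xi_t(x) \neq 2$ --- or there exist such $(y',r')$ with $\Z^d \times \{0\} \rsa (y',r')$ and an FSIP from $(y',r')$ to $(x,t)$. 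Since $\xi_0$ is nowhere $0$, the first inclusion in \eqref{eq:larger_for_zero} yields $\xi_{r'}(y') \neq 0$; the inductive hypothesis $A(T)$ yields $\xi_{r'}(y') \neq 2$; together these force $\xi_{r'}(y') = 1$, and then \eqref{eq:cond_ifsip} (applied at starting time $r'$ via the Markov property) transports this $1$ along the FSIP, giving $\xi_t(x) = 1$.

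A union bound over the a.s.\ locally finite set of Poisson transition points at lattice sites in the cone slice $t \in (T, 2T]$ against the $e^{-\bar c \alpha t}$ error of Proposition~\ref{prop:cone0} gives $\P(A(2T)^c \cap A(T)) \leq \mathrm{poly}(T) e^{-\bar c \alpha T}$. Summing over dyadic scales $T_k = 2^k T_0$ and combining with the base case gives $\P(A(\infty)) > 1 - \varepsilon$ once $T_0$ (and hence $m$) is large. The main obstacle I expect is making this union bound rigorous in continuous time: one has to reduce from a continuum of $(x,t)$ to a countable, locally finite index set (the Poisson transition times of the augmented Harris system, which are the only instants at which $\xi$ can change value), and check that the resulting polynomial count per dyadic scale is dominated by the exponential $e^{-\bar c \alpha T}$ from the propagation estimate. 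A secondary delicacy is the rigorous justification of the worst-case initial reduction, which requires a careful graphical coupling within the multitype dynamics.
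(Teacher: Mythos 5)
Your proposal follows essentially the same route as the paper: reduce to the worst case $\xi_0 \equiv 2$ off $B_0(m)$ so that $\xi_0$ is nowhere $0$, establish a base case from the speed bound \eqref{eq:1cp_moves_slowly}, and then propagate forward in time by applying Proposition~\ref{prop:cone0} to land the free selective infection path in a previously controlled region, using exactly the chain ``$\xi_{r'}(y') \neq 0$ by the first inclusion in \eqref{eq:larger_for_zero}, $\neq 2$ by induction, hence $=1$, transported by \eqref{eq:cond_ifsip}.'' The differences from the paper's proof are cosmetic (dyadic scales $2^kT_0$ versus slabs $[r_k,r_{k+1}]$ with $r_k=\sum_{i\le k}\ell_i^2$; a target rectangle of radius $\alpha t$ at $(0,t/2)$ versus one of radius $\ell_k$ near the base of the slab), except for one point discussed below.

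The one genuine gap is the step you yourself flag: the reduction from the continuum of $(x,t)$ to a countable index set. Your proposed fix --- union bounding over the Poisson transition points of the augmented Harris system --- does not work as stated, because Proposition~\ref{prop:cone0} is an estimate for \emph{deterministic} space--time points, and you cannot union bound a per-point estimate over a \emph{random} index set whose locations are themselves measurable functions of the Harris system (conditioning on a transition point being at $(x,t^*)$ distorts the law of $\mathbb{H}$ near $(x,t^*)$, which is exactly where the proposition's event lives). The paper resolves this differently, and you already have all the tools for it: apply the propagation estimate only at integer times $s\in\N$ (a deterministic set of polynomial cardinality per scale, cf.\ the events $\bar{E}_k$), and then interpolate to non-integer times by a separate application of \eqref{eq:set_2_contained} and \eqref{eq:1cp_moves_slowly}: a $2$ at $(x,t)$ with $t\notin\N$ forces a basic infection path from some $(y,\lfloor t\rfloor)$ with $\xi_{\lfloor t\rfloor}(y)=2$, hence from $y$ outside the (slightly enlarged) controlled region at time $\lfloor t\rfloor$, and such a path must travel distance at least $\ell$ in time $1$, which has probability $e^{-c\ell}$. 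With that substitution your argument closes; everything else in your write-up, including the arithmetic verifying $\mathscr{R}(0,t/2,\alpha t)\subset\mathscr{C}(0,0,\bbe_1)\cap(\Z^d\times[0,T])$ for $\alpha=\bbe/(4+2\bbe)$, checks out.
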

\begin{proof}
It suffices to prove the lemma under the assumption that $\lambda_1 > \lambda_2 > \lambda_c$, since reducing the value of $\lambda_2$ can only increase the probability on the left-hand side of \eqref{eq:want_lemma_cone2}. Additionally, by simple stochastic comparison considerations, it suffices to prove the lemma under the assumption that $\xi_0 \equiv 2$ outside $B_0(m)$. Together with $\xi_0 \equiv 1$ on $B_0(m)$, this gives
\begin{equation}\label{eq:never_zero} \xi_0(x) \neq 0 \text{ for all } x \in \Z^d,\end{equation}
which will be convenient.

The proof will rely on space-time sets whose definition will be based on an integer $\ell_0 > 0$.  We will assume that $\ell_0$ is taken as large as needed. Also, $c$ will be a small constant whose value may change from line to line.

We define $m = \ell_0^3$ and  
\begin{equation*}
r_0 = 0,\quad \ell_k = \ell_0 + k,\quad r_k =  \sum_{i=1}^k \ell_i^2,\quad k \in \N.
\end{equation*}
Next, define 
$$A_0 = \{(x,t) \in \Z^d \times  [0,r_1],\;\|x\| \leq \bbe t\}$$
and, for $k \geq 1$, define
\begin{align*}
&A_k = \{(x,t) \in \Z^d \times [r_k,r_{k+1}]: \|x\| \leq \bbe_1\cdot  r_k + \bbe\cdot (t-r_k)\},\\
&\bar{A}_k = \{(x,t) \in \Z^d \times [r_k,r_{k+1}]: \|x\| \leq \bbe_1\cdot  r_k + \bbe\cdot (t-r_k) + \ell_k\},\\
&I_k = \{(x,r_k): x \in \Z^d,\; \|x\| \leq \bbe_1 \cdot r_k + 2 \ell_k\} = B_0(\bbe_1 \cdot r_k + 2\ell_k) \times \{r_k\};
\end{align*}
see Figure \ref{fig:induct}. Note that
\begin{equation}\label{eq:include_Ak}
\mathscr{C}(0,0,\bbe_1) \subset \bigcup_{k=0}^\infty A_k.
\end{equation}

\begin{figure}[htb]
\begin{center}
\setlength\fboxsep{0pt}
\setlength\fboxrule{0.5pt}
\fbox{\includegraphics[width = 0.7\textwidth]{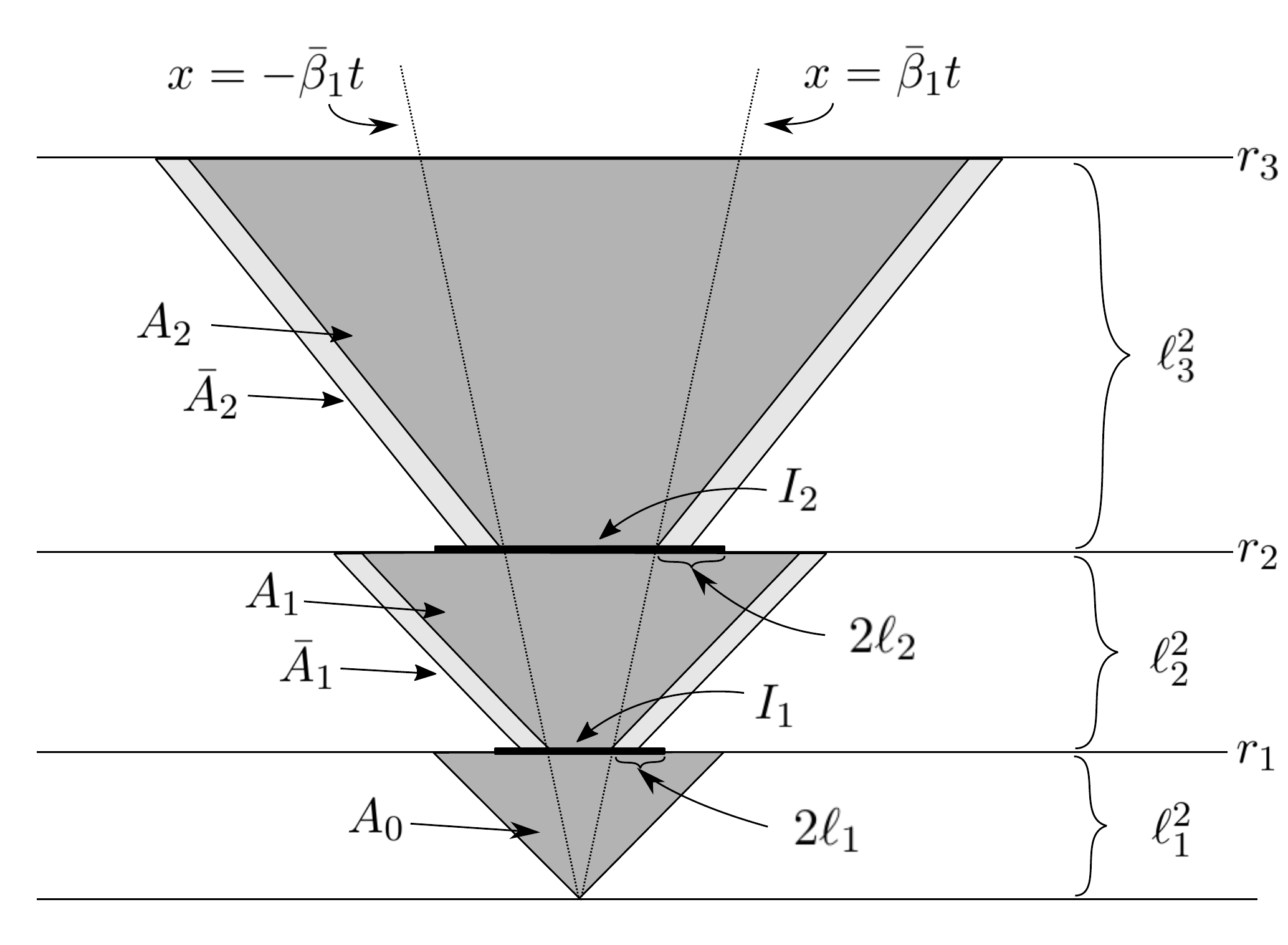}}
\end{center}
\caption{The sets $A_k$, $\bar{A}_k$ and $I_k$ in the proof of Lemma \ref{lem:cone_integers} in dimension one.}
\label{fig:induct}
\end{figure}

We claim that for any $k \geq 1$ and any $(x,s) \in \bar{A}_k$,
\begin{equation}\label{eq:claim_inside_proof}
\P\left[ \left\{\Z^d \times \{0\} \not\rsa (x,s)\right\} \cup \left\{\begin{array}{r}\exists (y',r') \in A_{k-1}:\;\Z^d \times \{0\} \rsa (y',r') \\ \text{ and $\exists$ an FSIP from $(y',r')$ to $(x,s)$} \end{array} \right\} \right]  > 1-\exp(-\bar{c}\ell_k),
\end{equation}
where $\bar{c}$ is the constant of Proposition \ref{prop:cone0}. Indeed, fix $(x,s) \in \bar{A}_k$. Using the definitions of $\bar{A}_k$ and $I_k$, it is easy to see that there exists $(y,r_k) \in I_k$ such that $(x,s) \in \mathscr{C}(y,r_k,\bbe)$. Moreover, using the fact that $(\bbe- \bbe_1)(r_k - r_{k-1}) \gg \ell_k$, we have $\mathscr{R}(y,r_k,\ell_k) \subset A_{k-1}$. Then, \eqref{eq:claim_inside_proof} follows directly from Proposition \ref{prop:cone0}.

We now define the events
\begin{align*}&E_0 = \{\xi_s(x) \neq 2 \text{ for all } (x,s) \in A_0\},\\[.2cm]& E_k = \{\xi_s(x) \neq 2 \text{ for all } (x,s) \in A_k\},\\[.2cm]
&\bar{E}_k = \{\xi_s(x) \neq 2 \text{ for all } (x,s) \in \bar{A}_k \text{ with } s \in \N\}, \quad k \geq 1.\end{align*}
We will show that if $\ell_0$ is large enough, there exists $c > 0$ such that
\begin{align}\label{eq:chain_E_0}
&\P(E_0) > 1-\exp(-c\ell_0),\\[.2cm]
&\label{eq:chain_E_1}\P(E_{k-1} \cap (\bar{E}_k)^c)< \exp(-c\ell_{k}),\; k\geq 1,\text{ and }\\[.2cm]
&\label{eq:chain_E_2}\P(\bar{E}_k \cap E_k^c) < \exp(-c\ell_k),\;k\geq 1.
\end{align}
These inequalities imply, for $\ell_0$ large enough, that
\begin{equation}\label{eq:big_intersection}\P\left(E_0 \cap \bigcap_{k=1}^\infty E_k\right) > 1-\varepsilon,\end{equation}
which by \eqref{eq:include_Ak} gives the desired result.

We start with \eqref{eq:chain_E_0}. By \eqref{eq:set_2_contained},
\begin{equation*} \begin{split}
\P(E_0^c) &\leq \sum_{y \in B_0(m)^c} \P\left[ (y,0) \rsa A_0\right]. 
\end{split}\end{equation*}
Since $A_0 \subset B_0(\bbe r_1) \times [0,r_1]$, for any $y$ in the above sum we have
\begin{align*}\P\left[(y,0) \rsa A_0\right] &\leq \P\left[(y,0) \rsa \left\{(x,s) \in \Z^d \times [0,r_1]: \|y-x\| \geq \|y\| - \bbe r_1\right\}\right]\\&\stackrel{\eqref{eq:1cp_moves_slowly}}{\leq}  \exp(b_1 r_1 - b_2 (\|y\|- \bbe r_1)) =  \exp(b_1 r_1 - b_2 (\|y\|- \bbe \ell_1^2)).\end{align*}
Since $m = \ell_0^3$, if $\ell_0$ is large enough and $c$ is small enough, \eqref{eq:chain_E_0} follows.

We now deal with \eqref{eq:chain_E_1}. For each $k \geq 1$ and each $(x,s) \in \bar{A}_k$, let $\bar{E}_k(x,s)$ be the event inside the probability in \eqref{eq:claim_inside_proof}, that is,
\begin{equation*} \bar{E}_k(x,s)= \{\Z^d \times \{0\} \not \rsa (x,s)\} \cup \left\{\begin{array}{r}\exists (y',r') \in A_{k-1}:\;\Z^d \times \{0\} \rsa (y',r') \text{ and  }\\ \text{$\exists$ a FSIP from $(y', r')$ to $(x,s)$ }  \end{array}\right\}.\end{equation*}
We claim that, for all $k \geq 1$,
\begin{equation}\label{eq:aux_with_prime} E_{k-1} \cap \left(\bigcap_{\substack{(x,s) \in \bar{A}_k:\\s\in \N}} \bar{E}_k(x,s)\right) \subset E_{k-1} \cap \bar{E}_k.\end{equation}
Indeed, assume that the event on the left-hand side occurs and fix $(x,s) \in \bar{A}_k$ with $s \in \N$; we have to prove that $\xi_s(x) \in \{0,1\}$. First assume that $\Z^d \times \{0\} \not\rsa (x,s)$, that is, there is no BIP from $\Z^d \times \{0\}$ to $(x,s)$. Then, by \eqref{eq:set_2_contained}, we have $\xi_s(x) \neq 2$ as desired. Now assume that  $\Z^d \times \{0\} \rsa (x,s)$; since $\bar{E}_k(x,s)$ occurs, there exists some $(y',r') \in A_{k-1}$ such that \begin{equation}\label{eq:very_aux0}\Z^d \times \{0\} \rsa (y',r')\end{equation} and \begin{equation} \label{eq:very_aux1}\text{there exists an FSIP from $(y',r')$ to $(x,s)$.}\end{equation} Now, \eqref{eq:never_zero}, \eqref{eq:very_aux0} and \eqref{eq:larger_for_zero} give $\xi_{r'}(y') \neq 0$. Then, since $(y',r') \in A_{k-1}$ and we are also under the assumption that $E_{k-1}$ occurs, we get $\xi_{r'}(y') = 1$. Then, \eqref{eq:cond_ifsip} and \eqref{eq:very_aux1} give $\xi_s(x) = 1$. This proves \eqref{eq:aux_with_prime}. We thus have
\begin{align}\label{eq:estimate_E_prime}
\P(E_{k-1} \cap (\bar{E}_k)^c) \leq \sum_{\substack{(x,s) \in \bar{A}_k:\\s \in \N}} \P((\bar{E}_k(x,s))^c).
\end{align}
It follows from \eqref{eq:claim_inside_proof} that, for any $(x,s) \in \bar{A}_k$,
\begin{equation*}
\P(\bar{E}_k(x,s)) > 1- \exp(-\bar{c}\ell_{k}).
\end{equation*}
Moreover, since $\bar{A}_k \subset B_0(\bbe r_k) \times [r_{k},r_{k+1}]$,
$$\#\{(x,s) \in \bar{A}_k: s \in \N\} \leq (2\bbe r_k)^{d} \cdot r_{k+1} \leq \ell_k^{10d} $$
if $\ell_0$ (and hence $\ell_k$) is large enough. Using these estimates in \eqref{eq:estimate_E_prime} gives \eqref{eq:chain_E_1}.

Finally, we turn to \eqref{eq:chain_E_2}:
\begin{align}\label{eq:very_aux_333}
\P(\bar{E}_k\cap E_k^c) \leq \sum_{j = \lfloor r_k \rfloor}^{\lfloor r_{k+1} \rfloor} \sum_{\substack{y \in \Z^d:\\(y,j) \notin \bar{A}_k}} \P\left[(y,j) \rsa \{(x,s) \in A_k: s \in [j,j+1]\} \right].
\end{align}
Fix $j \in \{\lfloor r_k\rfloor,\lfloor r_k\rfloor+1,\ldots,\lfloor r_{k+1}\rfloor\}$, $y \in \Z^d$ with $(y,j) \notin \bar{A}_k$ and $(x,s) \in A_k$ with $s \in [j,j+1]$. Letting $a_{k,j} = \bbe_1 r_k + \bbe(j-r_k)$, note that
$$(y,j) \notin \bar{A}_k \Longrightarrow \|y \| \geq a_{k,j} + \ell_k,\qquad (x,s) \in A_k \Longrightarrow \|x\| \leq \bbe_1 r_k + \bbe(j+1-r_k) =  a_{k,j} + \bbe;$$
from the second implication it follows that
$$\|y-x\| \geq | \|y\| - \|x\|| \geq  \|y\| - a_{k,j} - \bbe.$$ 
This shows that 
\begin{align*}&\P\left[(y,j) \rsa \{(x,s) \in A_k: s \in [j,j+1]\} \right] \\
&\leq \P\left[(y,j) \rsa \{(x,s): s \in [j,j+1],\; \|x-y\| \geq \|y\| - a_{k,j} + \bbe \} \right]\\
&\stackrel{\eqref{eq:1cp_moves_slowly}}{\leq} \exp\left(b_1 - b_2(\|y\| - a_{k,j} - \bbe)\right).\end{align*}
Using this bound in \eqref{eq:very_aux_333}, we obtain
$$\P(\bar{E}_k\cap E_k^c) \leq \sum_{j = \lfloor r_k \rfloor}^{\lfloor r_{k+1} \rfloor} \sum_{\substack{y \in \Z^d:\\\|y\| \geq a_{k,j} + \ell_k}}\exp\left(b_1 - b_2(\|y\| - a_{k,j} - \bbe)\right) < \exp(-c\ell_k)$$
for some $c > 0$.
\end{proof}

\begin{definition} Define the set of configurations
\begin{equation*}
G(n,u_1,u_2) = \left\{\begin{array}{ll}\xi \in \{0,1,2\}^{\Z^d}:& \xi(x) \neq 2 \text{ for all $x \in B_0(u_2)$,}\\[.2cm] &\#\{x \in B_0(u_1):\;\xi(x) = 1 \} > 
n \end{array} \right\}, \;n \in \N,\;u_1, u_2 > 0.
\end{equation*}
\end{definition}
Note that
\begin{equation}\label{eq:comparisonn}
n \geq n',\; u_1 \leq u_1',\; u_2 \geq u_2' \quad \Longrightarrow \quad G(n,u_1,u_2) \subseteq G(n',u_1',u_2').
\end{equation}

Recall the definition of $\mathcal{S}_1$ in \eqref{eq:survival_event}. 
\begin{lemma}
\label{lem:Lchange1}
For all $\varepsilon > 0$ and $n \in \N$ there exists $m > 0$ such that
\begin{equation*} \label{eq:aux_aux_m}\xi_0 \equiv 1 \text{ on } B_0(m) \quad \Longrightarrow \quad \P\left[\xi_t \in G(n,m,\bbe_1 t)\right] > 1-\varepsilon \text{ for all } t \geq m^3. \end{equation*}
In particular, for any $\varepsilon > 0$ there exists $m > 0$ such that
\begin{equation}\label{eq:aux_aux_mm}
\xi_0 \equiv 1 \text{ on }B_0(m) \quad \Longrightarrow \quad \P(\mathcal{S}_1) > 1-\varepsilon.
\end{equation}
\end{lemma}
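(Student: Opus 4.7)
The plan is to combine Lemma \ref{lem:cone_integers} (no $2$'s in the forward cone) with a density estimate for the one-type contact process to force many $1$'s into $B_0(m)$ at time $t$. As in the proof of Lemma \ref{lem:cone_integers}, I would begin with two standard monotonicity reductions: replacing the $0$'s in $\xi_0$ by $2$'s outside $B_0(m)$ only makes the event $\{\xi_t \in G(n,m,\bbe_1 t)\}$ less likely, and increasing $\lambda_2$ has the same effect. After these reductions, I may assume $\xi_0 \equiv 2$ on $B_0(m)^c$ (in particular $\xi_0 \not\equiv 0$) and $\lambda_1 > \lambda_2 > \lambda_c$, so that $H$ alone corresponds to a supercritical one-type contact process.

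Next, apply Lemma \ref{lem:cone_integers} with error $\varepsilon/2$: for $m$ large enough, the event
$E_1 := \{\xi_s(x) \neq 2 \text{ for all } (x,s) \in \mathscr{C}(0,0,\bbe_1)\}$ has probability $> 1 - \varepsilon/2$, which already delivers the ``no $2$'s in $B_0(\bbe_1 t)$'' half of the definition of $G(n,m,\bbe_1 t)$ at every $t$.

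For the ``many $1$'s'' half, consider $\Xi_t := \{x \in B_0(m) : \Z^d \times \{0\} \rsa (x,t)\}$. This event depends only on $H$ and coincides with $\{x \in B_0(m) : \zeta_t^{\un 1}(x) = 1\}$, where $\zeta^{\un 1}$ is the rate-$\lambda_2$ one-type contact process started from the fully occupied configuration. Since $\un 1$ is the maximal configuration and the one-type semigroup is monotone, $\zeta_t^{\un 1}$ is stochastically decreasing in $t$ and converges to the upper invariant measure $\bar\mu_{\lambda_2}$; consequently
\[
\sup_{t \geq 0} \P(\#\Xi_t \leq n) \;\leq\; \bar\mu_{\lambda_2}\!\left(\#\{x \in B_0(m) : \eta(x) = 1\} \leq n\right).
\]
Translation-invariance and ergodicity of $\bar\mu_{\lambda_2}$ together with $\bar\mu_{\lambda_2}(\eta(0) = 1) > 0$ (which holds by supercriticality of $\lambda_2$) imply that the right-hand side tends to $0$ as $m \to \infty$ for any fixed $n$, so I may enlarge $m$ to arrange $\P(\#\Xi_t > n) > 1 - \varepsilon/2$ uniformly in $t$.

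On $E_1 \cap \{\#\Xi_t > n\}$, which has probability $> 1 - \varepsilon$, take any $t \geq m^3$ and any $x \in \Xi_t$: then $\|x\| \leq m \leq \bbe_1 m^3 \leq \bbe_1 t$, so $(x,t) \in \mathscr{C}(0,0,\bbe_1)$; the first inclusion in \eqref{eq:larger_for_zero} (with $\xi_0 \not\equiv 0$) gives $\xi_t(x) \neq 0$, and $E_1$ gives $\xi_t(x) \neq 2$, hence $\xi_t(x) = 1$. This yields $\xi_t \in G(n,m,\bbe_1 t)$, proving the first statement. For the ``in particular'' claim, apply the first part with $n = 1$: $\P(\xi_t \notin \mathscr{A}_2) > 1 - \varepsilon$ for every $t \geq m^3$; since $\mathscr{A}_2$ is absorbing, $\{\xi_t \in \mathscr{A}_2\}$ is increasing in $t$, so $\P(\mathcal{S}_1^c) = \lim_{t \to \infty} \P(\xi_t \in \mathscr{A}_2) \leq \varepsilon$. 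The only genuine technical point is the uniform-in-$t$ concentration of $\#\Xi_t$ in the third paragraph, but the stochastic monotonicity in $t$ reduces it to the concentration of $\bar\mu_{\lambda_2}$, a classical feature of the supercritical contact process.
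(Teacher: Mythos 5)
Your proof is correct, and it follows the paper's strategy for two of the three ingredients: the ``no 2's in the cone'' part via Lemma \ref{lem:cone_integers}, and the ``many occupied backward-reachable sites'' part via the stochastic monotonicity in $t$ of $\{x \in B_0(m): \Z^d \times \{0\} \rsa (x,t)\}$ and the positive density of the upper invariant measure of the rate-$\lambda_2$ contact process (this is verbatim the paper's event $A_3$). Where you genuinely diverge is in how you upgrade ``$\Z^d \times \{0\} \rsa (x,t)$'' to ``$\xi_t(x) \neq 0$'': the paper keeps $\xi_0$ arbitrary outside $B_0(m)$ and therefore must route the infection through a site of $B_0(m)$ known to be in state $1$ at time $0$; this forces it to introduce two extra high-probability events — survival from $B_0(m)$ (via \eqref{eq:dies_many}) and the coupling/crossing event $A_4$ (via \eqref{eq:no_cross}) guaranteeing that a surviving source in $B_0(m)$ actually connects to $(x,t)$. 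You instead invoke attractiveness with respect to the order $2 \prec 0 \prec 1$ to reduce to $\xi_0 \equiv 2$ off $B_0(m)$, after which \emph{every} basic infection path ending at $(x,t)$ starts from a non-empty site, and the first inclusion in \eqref{eq:larger_for_zero} gives $\xi_t(x) \neq 0$ with no further estimates. This is a legitimate simplification — it is exactly the same ``simple stochastic comparison'' the paper itself performs at the start of the proof of Lemma \ref{lem:cone_integers}, and it lets you dispense with \eqref{eq:no_cross} entirely in this lemma. The only points you are leaning on implicitly are (i) that the event $\{\xi_t \in G(n,m,\bbe_1 t)\}$ is increasing for the order $2 \prec 0 \prec 1$ (both of its defining conditions are), so the reductions in $\xi_0$ and in $\lambda_2$ go the right way, and (ii) the coupling under which increasing $\lambda_2$ converts selective arrows into basic ones and hence decreases $\xi_t$ in that order; both are standard and of the same nature as the unproved comparisons the paper already uses. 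The deduction of \eqref{eq:aux_aux_mm} from the absorbing character of $\mathscr{A}_2$ matches the paper's closing remark.
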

\begin{proof}
It suffices to prove the statements under the assumption that $\lambda_1 > \lambda_2 > \lambda_c$.

We claim that, if $n$ is fixed, $m$ is then taken large enough, $\xi_0$ is identically one on $B_0(m)$ and $t \geq m^3$, then the following four events occur with high probability:
\begin{align*}
&A_1 = \{\xi_t(x)\neq 2 \text{ for all } x \in B_0(\bbe_1 t)\};\\
&A_2 = \{\exists y \in B_0(m): (y,0) \rsa \Z^d \times \{t\}\};\\
&A_3 = \{\# \{x \in B_0(m): \Z^d \times \{0\} \rsa (x,t)\} > n \};\\
&A_4 = \{\text{for any } x, y \in B_0(m),\text{ if } (y,0) \rsa \Z^d \times \{t\} \text{ and } \Z^d \times \{0\} \rsa (x,t),\text{ then } (y,0) \rsa (x,t)\}.
\end{align*}
To see that $A_1, A_2$ and $A_4$ hold with high probability when $m$ is large enough and $t \geq m^3$, respectively apply Lemma \ref{lem:cone_integers}, \eqref{eq:dies_many} and \eqref{eq:no_cross}. For $A_3$, note that under $\mathbb{P}$ the set $\{x \in B_0(m): \Z^d \times \{0\} \rsa (x,t)\}$ is stochastically decreasing in $t$ (since it has the same distribution as $\{x \in B_0(m): (x,0) \rsa \Z^d \times \{t\}\}$), 
 and hence
\begin{align*}\P(A_3) &\geq \lim_{s\to\infty} \P\left[\#\{x \in B_0(m): \Z^d \times \{0\} \rsa (x,s)\} > n \right] \\&= \mu_1'(\{\xi: \#\{x \in B_0(m): \xi(x) = 1\} > n\}),\end{align*}
where $\mu_1'$ is the upper stationary distribution of a one-type contact process with rate $\lambda_2$ (rather than $\lambda_1$). Now, since $\mu_1'$ is supported on configurations with infinitely many 1's, we can choose $m$ so that the right-hand side is arbitrarily close to 1.

Suppose now that the four events occur. Fix $x \in B_0(m)$ such that $\Z^d \times \{0\} \rsa (x,t)$. Since $A_2$ occurs, we can take $y \in B_0(m)$ such that $(y,0) \rsa \Z^d \times \{t\}$; then, since $A_4$ occurs, we have $(y,0) \rsa (x,t)$. Using the first inclusion in \eqref{eq:larger_for_zero} and the fact that $\xi_0(y) = 1$, we obtain $\xi_t(x) \neq 0$. Since $A_1$ occurs, we then have $\xi_t(x) = 1$.

The second statement of the lemma follows from observing that $\mathcal{S}_1 = \cap_{t \geq 0} \{\exists x: \xi_t(x) = 1\}$.
\end{proof}

\begin{proof}[Proof of Theorem \ref{thm:main}]
We will need the fact:
\begin{equation}
\label{eq:for_all_M_delta}\begin{split}
&\forall m > 0\; \exists \delta > 0:\;\P\left[\xi_{t+1} \equiv 1 \text{ on } B_x(m) \mid \xi_t(x) = 1 \right] > \delta,\quad (x,t) \in \Z^d\times [0,\infty).\end{split}
\end{equation}
This follows from the fact that, if $\xi_t(x) = 1$, then $\xi_{t+1} \equiv 1$ on $B_x(m)$ can be achieved from finitely many prescription on the Poisson processes of the Harris system on the space-time set $B_x(m) \times [t,t+1]$. In fact, by using several disjoint space-time sets of this form, we can also show that
\begin{equation}\label{eq:for_all_M_eps}\begin{split}
&\forall \varepsilon > 0\;\forall m > 0\; \exists n > 0:\;\P\left[\exists x:\;\xi_{t+1}\equiv 1 \text{ on } B_x(m) \mid \#\{x:\xi_t(x) = 1\} \geq n \right] > 1-\varepsilon.
\end{split}
\end{equation}

By simple monotonicity and translation invariance considerations, to prove the first statement of Theorem \ref{thm:main}, it suffices to prove that $\P(\mathcal{S}_1) > 0$ for the case where $\xi_0$ is the configuration defined by $\xi_0(0) = 1$ and $\xi_0(x) = 2$ for all $x \neq 0$. But this is an immediate consequence of \eqref{eq:aux_aux_mm} and \eqref{eq:for_all_M_delta}.
We now turn to the second statement of the theorem. 
We start noting that, for any $n > 0$,
\begin{equation}\label{eq:drop_below_n}\P\left[\mathcal{S}_1 \cap \left\{\liminf_{t\to\infty} \#\{x: \xi_t(x) = 1\} < n\right\} \right] = 0.\end{equation}
This follows from elementary considerations concerning absorption probabilities of Markov processes: each time we have $\#\{x: \xi_t(x) = 1\} < n$, there is a positive chance $\delta_n > 0$ that, in the next second, all the 1's die without giving birth; hence, if the 1's are to survive, the population of 1's cannot drop below $n$ infinitely many times.

Now, \eqref{eq:for_all_M_eps} and \eqref{eq:drop_below_n} together imply that, for all $m > 0$,
\begin{equation*}
\P\left[\mathcal{S}_1 \backslash \left\{ \exists (x,t): \xi_t \equiv 1 \text{ on } B_x(m) \right\} \right] = 0
\end{equation*}
Together with Lemma \ref{lem:cone_integers}, this gives
\begin{equation}\label{eq:last_overlap}
\P\left[\exists (x,t): \xi_s(y) \neq 2 \text{ for all } (y,s) \text{ with } s \geq t,\; \|y-x\| \leq \bbe_1 (s-t)\mid \mathcal{S}_1 \right] = 1.
\end{equation}
Now, note that for any $(x,t) \in \Z^d \times [0,\infty)$, there exists $t' > 0$ such that
$$\{(y,s): s\geq t,\; \|y-x\| \leq \bbe_1(s-t)\} \supset \{(y,s): s \geq t',\; \|y\| \leq \bbe_2\cdot s\}.$$
Hence, \eqref{eq:last_overlap} gives the desired result, with $\alpha = \bbe_2$.
\end{proof}

\begin{lemma}\label{lem:many_survival_goes}
For all $\varepsilon > 0$ there exists $n \in \N$ such that, if $\#\{x: \xi_0(x) = 1\} \geq n$, then $\P(\mathcal{S}_1) > 1-\varepsilon$.
\end{lemma}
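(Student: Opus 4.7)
The plan is to chain together two facts already established in the excerpt: \eqref{eq:aux_aux_mm} (which says that a large enough \emph{ball} of type 1 individuals in the initial configuration suffices for survival with probability close to 1) and \eqref{eq:for_all_M_eps} (which says that a large enough \emph{number} of type 1 individuals suffices to force, after one unit of time, the creation of such a ball with probability close to 1). The strong Markov property and translation invariance then glue these together.

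More precisely, given $\varepsilon > 0$, I would first invoke \eqref{eq:aux_aux_mm} with $\varepsilon/2$ in place of $\varepsilon$ to obtain an integer $m > 0$ with the property that, for the process $(\xi_t)$ started from any configuration $\eta$ satisfying $\eta \equiv 1$ on $B_0(m)$, one has $\P_\eta(\mathcal{S}_1) > 1 - \varepsilon/2$. By translation invariance of the dynamics, the same bound holds with $B_0(m)$ replaced by $B_x(m)$ for any $x \in \Z^d$. Next, I would apply \eqref{eq:for_all_M_eps} with this $m$ and with $\varepsilon/2$ to obtain $n \in \N$ such that, for every $\xi_0$ with $\#\{x:\xi_0(x)=1\} \geq n$, the event
\[
E := \{\exists x \in \Z^d : \xi_1 \equiv 1 \text{ on } B_x(m)\}
\]
has probability greater than $1 - \varepsilon/2$.

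To finish, I would apply the strong Markov property at time $1$. On $E$, select (in a measurable way, e.g.\ by a lexicographic minimum) a site $X$ such that $\xi_1 \equiv 1$ on $B_X(m)$; conditionally on $\xi_1$ and on $\{X = x\}$, the process $(\xi_{1+s})_{s \geq 0}$ is distributed as the multitype contact process started from $\xi_1$, and its law is dominated from below (in the sense of survival probability) by that of the process started from any configuration which equals $1$ throughout $B_x(m)$. Thus, by the first step, the conditional probability of $\mathcal{S}_1$ given $\xi_1$ is at least $1-\varepsilon/2$ on $E$. Consequently,
\[
\P(\mathcal{S}_1) \;\geq\; \P(E)\cdot(1-\varepsilon/2) \;>\; (1-\varepsilon/2)^2 \;>\; 1-\varepsilon,
\]
which is the desired conclusion. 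There is no real obstacle here beyond the bookkeeping above; the content has already been done in Lemma \ref{lem:Lchange1} and in \eqref{eq:for_all_M_eps}, and the present lemma is just the strengthening from the deterministic hypothesis $\xi_0 \equiv 1$ on $B_0(m)$ to the weaker hypothesis that $\xi_0$ merely contains many type 1 individuals (possibly scattered).
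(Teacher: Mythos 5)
Your proposal is correct and is exactly the argument the paper intends: the paper's proof of this lemma is the single sentence that it is ``an immediate consequence of \eqref{eq:aux_aux_mm} and \eqref{eq:for_all_M_eps}'', and you have simply filled in the Markov-property bookkeeping (note that since \eqref{eq:aux_aux_mm} holds for \emph{any} configuration that is identically $1$ on a ball of radius $m$, you can apply it directly to $\xi_1$ on the event $E$ without even invoking a domination argument).
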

\begin{proof}
The statement is an immediate consequence of \eqref{eq:aux_aux_mm} and \eqref{eq:for_all_M_eps}.
\end{proof}

\begin{lemma}\label{lem:Lchange3}
Let $\xi_0 \in \{0,1,2\}^{\Z^d}$ be a configuration with at least one site in state 1. For all $\varepsilon > 0$ and $n > 0$ there exists $s_0$ and $r_0$ such that
\begin{equation*}
s \geq s_0 \quad \Longrightarrow \quad \P\left[ \xi_s \in G(n,r_0,\bbe_2 s) \mid \mathcal{S}_1\right] > 1-\varepsilon.
\end{equation*}
\end{lemma}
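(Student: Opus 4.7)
The plan is to reduce to Lemma \ref{lem:Lchange1} via the strong Markov property. The key input, already exploited in the proof of Theorem \ref{thm:main}, is that on $\mathcal{S}_1$, for every prescribed $m > 0$, almost surely there exist a finite time $\tau$ and a site $X$ with $\xi_\tau \equiv 1$ on $B_X(m)$ --- this follows from combining \eqref{eq:for_all_M_eps} and \eqref{eq:drop_below_n}. Once such a pair $(X, \tau)$ is guaranteed with $\|X\|$ and $\tau$ a priori bounded (with high conditional probability), I restart at time $\tau$, translate $X$ to the origin, and apply Lemma \ref{lem:Lchange1}. The resulting empty cone of slope $\bbe_1$ centered at $X$ will contain $B_0(\bbe_2 s)$ for $s$ large, because $\bbe_2 < \bbe_1$ and $\|X\|, \tau$ are bounded.

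Fix $\varepsilon > 0$, $n \in \N$, and set $p = \P(\mathcal{S}_1) > 0$. First I would apply Lemma \ref{lem:Lchange1} with parameters $\varepsilon p/2$ and $n$ to produce a radius $m > 0$ such that every configuration $\equiv 1$ on $B_0(m)$ satisfies $\P[\xi'_t \in G(n, m, \bbe_1 t)] > 1 - \varepsilon p/2$ for all $t \geq m^3$. Using the almost-sure existence of a filled ball on $\mathcal{S}_1$ noted above, I would then choose $R, T > 0$ so that
\[
\P[\mathcal{S}_1 \cap E^c] < \varepsilon p / 2, \quad \text{where} \quad E = \{\exists (x,\tau) \in B_0(R) \times [0,T]:\; \xi_\tau \equiv 1 \text{ on } B_x(m)\}.
\]
On $E$, I would let $\tau$ be the hitting time of the closed set of configurations that are $\equiv 1$ on $B_x(m)$ for some $x \in B_0(R)$, and take $X$ to be the lexicographically smallest such $x$ at time $\tau$. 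Then $\tau$ is a stopping time with $\tau \leq T$ and $X$ is $\mathcal{F}_\tau$-measurable with $\|X\| \leq R$.

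The main step is the application of the strong Markov property at $\tau$, combined with translation invariance of the Harris system: conditionally on $\mathcal{F}_\tau$ and on $E$, the shifted process $u \mapsto \xi_{\tau+u}(\cdot + X)$ is distributed as a multitype contact process whose initial configuration is $\equiv 1$ on $B_0(m)$. Lemma \ref{lem:Lchange1} applied to this shifted process at time $u = s - \tau \geq m^3$ gives, with conditional probability at least $1 - \varepsilon p / 2$ on $E$, both (i) no 2's in $B_X(\bbe_1(s-\tau))$ and (ii) more than $n$ ones in $B_X(m)$. Choosing $r_0 = m + R$ and $s_0 = \max\bigl(T + m^3,\, (\bbe_1 T + R)/(\bbe_1 - \bbe_2)\bigr)$, the triangle inequality gives $B_X(\bbe_1(s-\tau)) \supseteq B_0(\bbe_2 s)$ and $B_X(m) \subseteq B_0(r_0)$ for every $s \geq s_0$, so (i) and (ii) together imply $\xi_s \in G(n, r_0, \bbe_2 s)$. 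A union bound then yields $\P[\mathcal{S}_1 \cap \{\xi_s \notin G(n, r_0, \bbe_2 s)\}] < \varepsilon p$, equivalent to the desired conditional bound.

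The subtlety I expect is not in any single step but in the precise setup of the stopping-time construction for $(\tau, X)$, so that the strong Markov property and translation invariance can be invoked cleanly and the shifted process can be recognized as having the initial distribution required by Lemma \ref{lem:Lchange1}. A deterministic tie-breaking rule on the finite set $B_0(R) \cap \Z^d$ handles this; the remaining checks --- the triangle-inequality comparisons, the union bounds, and the correct ordering of constants ($m$ from Lemma \ref{lem:Lchange1}, then $R, T$ depending on $m$, then $r_0$ and $s_0$) --- are routine.
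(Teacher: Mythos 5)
Your proof is correct and follows essentially the same route as the paper's: locate a fully occupied ball $B_x(m)$ at a bounded space--time location with high conditional probability on $\mathcal{S}_1$ (via \eqref{eq:for_all_M_eps} and \eqref{eq:drop_below_n}), restart there and invoke Lemma \ref{lem:Lchange1}, then choose $r_0$ and $s_0$ so that the $\bbe_1$-cone based at the random center absorbs $B_0(\bbe_2 s)$. The only cosmetic difference is that the paper conditions at a deterministic time $t_0$ (so the ordinary Markov property suffices), whereas you use a stopping time $\tau\in[0,T]$ and the strong Markov property; both work.
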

\begin{proof} Fix $\xi_0, \varepsilon, n$ as in the statement of the lemma. Choose $m$ corresponding to $\varepsilon$ and $n$ in the first part of Lemma \ref{lem:Lchange1}. Using \eqref{eq:for_all_M_eps} and \eqref{eq:drop_below_n}, it is easy to see that there exist $t_0 > 0$ and $\ell_0 > 0$ such that, defining
\begin{equation*}
E_1 = \left\{\exists x_0 \in B_0(\ell_0):\;\xi_{t_0} \equiv 1 \text{ on } B_{x_0}(m)\right\},
\end{equation*}
we have $\P(E_1 \mid \mathcal{S}_1) > 1-\varepsilon$. Next, defining
$$E_2(t) = \left\{\begin{array}{ll}\exists x_0 \in B_0(\ell_0):& \xi_t(y) \neq 2 \;\forall y \in B_{x_0}(\bbe_1(t-t_0)),\\[.2cm]& \# \{y \in B_{x_0}(m): \xi_t(y) = 1\} > n \end{array} \right\},\quad t \geq t_0 + m^3,$$
the choice of $m$ implies in $\P(E_2(t) \mid E_1) > 1-\varepsilon$ for all $t \geq t_0 + m^3$.
Hence,
$$\P(\mathcal{S}_1 \cap E_2(t)^c) \leq \P(\mathcal{S}_1 \cap E_1^c) + \P(E_1 \cap E_2(t)^c) \leq 2\varepsilon\quad \Longrightarrow \quad \P(E_2(t) \mid \mathcal{S}_1) \geq 1- \frac{2\varepsilon}{\P(\mathcal{S}_1)}.$$
To conclude, choose $r_0 > \ell_0 + m$ and choose $s_0$ large enough that $\bbe_1(s_0 - t_0) > \bbe_2 s_0 + \ell_0$, so that
$$x_0 \in B_0(\ell_0),\;s \geq s_0\quad \Longrightarrow \quad B_{x_0}(m) \subset B_0(r_0),\;B_{x_0}(\bbe_1(s-t)) \supset B_0(\bbe_2 s).$$
Due to these inclusions, for any $s \geq s_0$ we have $E_2(s) \subset \{\xi_s \in G(n,r_0,\bbe_2 s)\}$.
\end{proof}

\begin{lemma}\label{lem:Lchange4}
Let $f: \{0,1,2\}^{\Z^d} \to \mathbb{R}$ be a function depending only on finitely many coordinates. For all $\varepsilon > 0$ there exists $n_0 \in \N$ and $u_0 > 0$ such that
\begin{equation*}
n \geq n_0,\;u \geq u_0,\;\xi_0 \in G(n,\sqrt{u},u^2)\quad \Longrightarrow \quad \left| \E[f(\xi_u)] - \int fd\mu_1\right| < \varepsilon.
\end{equation*}
\end{lemma}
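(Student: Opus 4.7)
The plan is to reduce the problem to a one-type $\lambda_1$ contact process and then invoke the complete convergence theorem. Define $\eta_t$ to be the one-type $\lambda_1$ contact process built from the augmented Harris system (so using both basic and selective arrows), with $\eta_0 := \mathds{1}\{\xi_0 = 1\}$. Because $\xi_0 \in G(n, \sqrt u, u^2)$, the restrictions $\xi_0|_{B_0(u^2)}$ and $\eta_0|_{B_0(u^2)}$ coincide, and $\eta_0$ has at least $n$ ones in $B_0(\sqrt u)$. Also, since the multitype process restricted to $\mathscr{A}_1$ is a one-type $\lambda_1$ contact process, $\bar\mu_1$ is the upper invariant measure of this $\lambda_1$ process.

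\textbf{Step 1 (coupling).} I would first show that $\xi_u|_F = \eta_u|_F$ on an event of probability $1 - o(1)$ as $u \to \infty$, where $F := \mathrm{supp}(f)$. Fix $A > b_1/b_2$ with $b_1, b_2$ from \eqref{eq:1cp_moves_slowly}, and consider the events $E^{(1)}_u = \{\text{no BIP from } (y,0)\text{ with }\|y\|>u^2\text{ reaches }B_0(Au)\times[0,u]\}$ and $E^{(2)}_u = \{\text{every SIP ending in }F\times\{u\}\text{ stays inside } B_0(Au)\times[0,u]\}$. A union bound combined with time reversal of the Harris system and \eqref{eq:1cp_moves_slowly} (applied to the basic system for $E^{(1)}_u$, and to the combined-arrow $\lambda_1$ system for $E^{(2)}_u$) shows that $\P((E^{(1)}_u)^c \cup (E^{(2)}_u)^c) \to 0$ as $u \to \infty$. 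On $E^{(1)}_u$, Lemma \ref{lem:first_prop_paths}\eqref{eq:set_2_contained} forbids any type $2$ inside $B_0(Au) \times [0, u]$; combined with $E^{(2)}_u$ and the equality $\xi_0|_{B_0(Au)} = \eta_0|_{B_0(Au)}$, this forces $\xi_u|_F = \eta_u|_F$.

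\textbf{Step 2 (convergence of $\eta_u$).} By \eqref{eq:dies_many}, $\P[\eta\text{ dies out}] \leq e^{-\bar c_1 n}$, which is below $\varepsilon/(8\|f\|_\infty)$ for $n \geq n_0$. The complete convergence theorem for the one-type contact process then gives, for each fixed $\eta_0$, $\E[f(\eta_u)] \to \P[\eta\text{ survives}] \int f\,d\bar\mu_1 + \P[\eta\text{ dies out}] f(\underline 0)$ as $u \to \infty$. Combined with Step 1, this yields the lemma, provided the rate of convergence in Step 2 can be made uniform over the admissible family of initial configurations.

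\textbf{Main obstacle.} The main technical difficulty is this uniformity, which the standard complete convergence theorem does not supply. I would handle it by a restart-and-sandwich argument: pick $m$ large (depending on $\varepsilon$), then apply \eqref{eq:for_all_M_eps} at time $1$ to obtain $n_0$ such that, with probability $\geq 1 - \varepsilon/(8\|f\|_\infty)$, there exists a random $x_0 \in B_0(\sqrt u + R)$ with $\eta_1 \equiv 1$ on $B_{x_0}(m)$. On this event, the basic coupling yields $\tilde\eta^{(x_0)}_t \leq \eta_t \leq \zeta^*_t$ for $t \geq 1$, where $\tilde\eta^{(x_0)}$ is the $\lambda_1$ one-type process started at time $1$ from $\mathds{1}_{B_{x_0}(m)}$ and $\zeta^*$ is the same process started at time $0$ from $\underline 1$; thus $f(\eta_u) = f(\zeta^*_u)$ whenever $\tilde\eta^{(x_0)}_u|_F = \zeta^*_u|_F$. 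Passing to the one-type $\lambda_1$ dual $\hat\eta^x$ started from $x$, one has $\P[\zeta^*_u(x) = 1, \tilde\eta^{(x_0)}_u(x) = 0] = \P[\hat\eta^x_u \neq \varnothing, \hat\eta^x_{u-1} \cap B_{x_0}(m) = \varnothing]$. For $u$ large enough that $\alpha(u-1)$ exceeds $\sqrt u + R + \mathrm{diam}(F)$ (with $\alpha$ the asymptotic speed), the shape theorem and translation invariance of $\bar\mu_1$ show that this probability tends, as $u\to\infty$ and uniformly in $x_0$, to $\bar\mu_1(\{\zeta: \zeta|_{B_0(m)}\equiv 0\})$, which can be made arbitrarily small by choosing $m$ large. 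Combined with the universal-rate convergence $\E[f(\zeta^*_u)] \to \int f\,d\bar\mu_1$, choosing $m$, then $n_0$, then $u_0$ in this order closes the argument.
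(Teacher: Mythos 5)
Your proposal is correct in outline, but it resolves the crucial step by a genuinely different route than the paper. The two arguments share the first reduction: replace $\xi$ by a one-type rate-$\lambda_1$ process that agrees with $\xi_0$ near the origin, using \eqref{eq:1cp_moves_slowly} to show the two coincide on the support of $f$ at time $u$. (In your Step 1 the one link you leave implicit is why $E^{(1)}_u \cap E^{(2)}_u$ ``forces'' $\xi_u|_F = \eta_u|_F$: the clean justification, which is also how the paper phrases it, is that the discrepancy set $\{x:\xi_t(x)\neq\eta_t(x)\}$ can only be created along arrows emanating from already-discrepant sites and is erased by death marks, hence is dominated by a rate-$\lambda_1$ contact process started from $\{\xi_0=2\}\subseteq B_0(u^2)^c$; with that observation your event $E^{(2)}_u$ suffices.) The divergence is in how uniformity over initial configurations is obtained. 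The paper compares the truncated process directly with the all-ones process on $B_0(\sqrt u)$ and covers the disagreement event by $E_1\cup E_2$, where $E_1$ is the event that the SIP-cluster of $\Lambda$ (the 1's of $\xi_0$ in $B_0(\sqrt u)$) dies out, with probability at most $e^{-\bar{c}_1 n}$ by \eqref{eq:dies_many}, and $E_2$ is the event that some pair $(y,0)$, $(x,u)$ with $y\in\Lambda$, $x\in B_0(\sqrt u)$ are both ``alive'' but not joined by an SIP, with probability at most $Cu^d e^{-\bar{c}_2 u}$ by \eqref{eq:no_cross} and a union bound; both bounds are manifestly uniform over $G(n,\sqrt u,u^2)$ and use only the stated preliminaries. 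You instead restart at time $1$ to manufacture a filled ball $B_{x_0}(m)$, sandwich $\eta$ between the process started from that ball and the all-ones process, and control the gap by self-duality together with the shape theorem (uniform coupling of $\zeta^{\{x\}}_t$ with $\zeta^{\Z^d}_t$ on the linearly growing coupled region). This works and is the classical pattern in complete-convergence arguments, but it imports the shape theorem as an external input, whereas \eqref{eq:no_cross} is precisely the two-point consequence of the same renormalization that the paper has already set up; the paper's route also dispenses with the restart step, since \eqref{eq:dies_many} already yields survival of the $\Lambda$-cluster with probability $1-e^{-\bar{c}_1 n}$ uniformly over all $\Lambda$ with $\#\Lambda>n$.
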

\begin{proof}
Fix $\varepsilon > 0$. Let $n \in \N$ and $u > 0$ (throughout the proof, we will assume that $n$ and $u$ are large enough) and fix $\xi_0 \in G(n,\sqrt{u},u^2)$. Define $\Lambda = \{x \in B_0(\sqrt{u}): \xi_0(x) = 1\}$.
By assumption, $\#\Lambda > n$. Also define the following configurations:
$$\xi_0' = \xi_0 \cdot \mathds{1}_{B_0(u^2)}, \qquad \xi_0^{\underline{1}}(x) = 1 \; \forall x\in \Z^d. $$
We consider the three processes $(\xi_t)$, $(\xi'_t)$ and $(\xi_t^{\underline{1}})$, respectively started from $\xi_0$, $\xi'_0$ and $\xi_0^{\underline{1}}$, constructed using the same augmented Harris system $\mathbb{H}$. Note that type 2 is absent from $(\xi'_t)$ and $(\xi_t^{\underline{1}})$, so that these are in fact one-type contact processes satisfying
\begin{align}\nonumber&\xi'_t(x) = 1 \quad \text{if and only if} \quad \exists \text{$y \in B_0(u^2)$ with $\xi'_0(y) = 1$} \\[-.6cm] \label{eq:SIP_propp1} \\ \nonumber&\hspace{5cm} \text{and there is an SIP from $(y,0)$ to $(x,t)$};\\[.2cm]
&\xi^{\underline{1}}_t(x) = 1 \quad \text{if and only if} \quad \text{there is an SIP from $\Z^d \times \{0\}$ to $(x,t)$}.\label{eq:SIP_propp2}
\end{align}
Also note that ($\xi_t^{\underline{1}}$) converges to $\mu_1$ as $t \to \infty$, so if $u$ is large enough, 
$$\left|\mathbb{E}[f(\xi^{\underline{1}}_u)] - \int fd\mu_1\right| < \varepsilon.$$ 
The statement of the lemma will thus follow once we prove that, if $u$ is large enough,
\begin{align}
\label{eq:aux_claim_1}& \P\left[\xi_u(x) = \xi'_u(x) \text{ for all } x \in B_0(\sqrt{u}) \right] > 1-\varepsilon \text{ and }\\[.2cm]
\label{eq:aux_claim_2} &\P\left[\xi'_u(x) = \xi^{\underline{1}}_u(x) \text{ for all } x \in B_0(\sqrt{u}) \right] > 1-\varepsilon.
\end{align}
The proof of \eqref{eq:aux_claim_1} is simple and we only sketch it. Observe that the process
$$\{x: \xi_t(x) \neq \xi'_t(x)\},\; t\geq 0$$
can be stochastically dominated by a (one-type) contact process with rate $\lambda_1$; this process is empty on $B_0(u^2)$ at time 0. Hence, \eqref{eq:aux_claim_1} follows from an application of \eqref{eq:1cp_moves_slowly}: from time 0 to time $u$, the occupied sites in this process do not have time to reach $B_0(\sqrt{u})$.

Let us prove \eqref{eq:aux_claim_2}. By \eqref{eq:SIP_propp1} and \eqref{eq:SIP_propp2}, we have $\xi'_u \leq \xi^{\underline{1}}_u$, so these two configurations can only differ in $B_0(\sqrt{u})$ if for some $x \in B_0(\sqrt{u})$ we have $\xi'_u(x) = 0$ and $\xi^{\underline{1}}_u(x) = 1$. Moreover, we have
\begin{equation}\label{eq:almost_over_bound}
\{\exists x \in B_0(\sqrt{u}):\xi'_u(x) =0,\; \xi^{\underline{1}}_u(x)= 1\} \subset E_1 \cup E_2,
\end{equation}
where
\begin{align*} &E_1 =\{\text{there is no SIP from $\Lambda \times \{0\}$ to $\Z^d \times \{t\}$}\},\\
&E_2 = \left\{\begin{array}{ll}\exists y \in \Lambda,\; x\in B_0(\sqrt{u}):& \text{there is no SIP from $(y,0)$ to $(x,u)$},\\[.2cm]& \text{there is an SIP from $(y,0)$ to $\Z^d \times \{u\}$},\\[.2cm]& \text{there is an SIP from  $\Z^d \times \{0\}$ to $(x,u)$}\end{array} \right\}.
\end{align*}
Hence, \eqref{eq:aux_claim_2} follows from \eqref{eq:almost_over_bound}, \eqref{eq:dies_many}, \eqref{eq:no_cross}, and a union bound. 
\end{proof}

\begin{proof}[Proof of Theorem \ref{thm:main2}]
Let $f: \{0,1,2\}^{\Z^d} \to \mathbb{R}$ be a function depending only on finitely many coordinates and fix $\xi_0 \in \{0,1,2\}^{\Z^d}$. By the Dominated Convergence Theorem, \begin{equation*}\mathbb{E}[f(\xi_t)\cdot \mathds{1}_{(\mathcal{S}_1 \cup \mathcal{S}_2)^c}] \xrightarrow{t \to \infty} f(\underline{0}) \cdot \P((\mathcal{S}_1 \cup \mathcal{S}_2)^c). \end{equation*}
We will prove that
\begin{align}\label{eq:dominate_part2}
&\mathbb{E}[f(\xi_t)\cdot \mathds{1}_{\mathcal{S}_1 }] \xrightarrow{t \to \infty} \int fd\mu_1 \cdot \P(\mathcal{S}_1) \text{ and }\\
&\mathbb{E}[f(\xi_t)\cdot \mathds{1}_{\mathcal{S}_1^c \cap \mathcal{S}_2 }] \xrightarrow{t \to \infty} \int fd\mu_2 \cdot \P(\mathcal{S}_1^c \cap \mathcal{S}_2)\label{eq:dominate_part3}
\end{align}
also hold. These three convergences imply in \eqref{eq:thm_main2}. The fact that the set of extremal stationary distributions is equal to $\{\mu_1,\mu_2,\delta_{\underline{0}}\}$ is an immediate consequence.

To prove \eqref{eq:dominate_part2}, assume $\xi_0$ has at least one site in state 1 and fix $\varepsilon > 0$. We choose variables as follows:
\begin{itemize}
\item choose $n_0, u_0$ corresponding to $f,\varepsilon$ in Lemma \ref{lem:Lchange4};
\item fix $n \geq n_0$ large enough corresponding to $\varepsilon$ in Lemma \ref{lem:many_survival_goes};
\item choose $r_0,s_0$ corresponding to $\xi_0,\varepsilon,n$ in Lemma \ref{lem:Lchange3};
\item fix $u \geq \max\{u_0,r_0^2\}$, then fix $s_1 \geq s_0$ with $\bbe_2 s_1 \geq u^2$, so that, by \eqref{eq:comparisonn},
\begin{equation*}
t \geq s_1 \quad \Longrightarrow \quad G(n,r_0,\bbe_2 t) \subset G(n, \sqrt{u},u^2).
\end{equation*}
\end{itemize}
With these choices, the implications of the three lemmas (Lemma \ref{lem:many_survival_goes}, \ref{lem:Lchange3} and \ref{lem:Lchange4}) give:
\begin{align}\label{eq:last_reason1}
&\P[\mathcal{S}_1 \mid \xi_t \in G(n,\sqrt{u},u^2)] > 1-\varepsilon \quad \forall t \geq 0,\\[.2cm]
\label{eq:last_reason2}&\P\left[\xi_t \in G(n,\sqrt{u},u^2)\mid \mathcal{S}_1\right] > 1-\varepsilon \quad \forall t \geq s_1,\\[.2cm]
\label{eq:last_reason3}&\left| \E \left[f(\xi_{t+u})\mid \xi_t \in G(n,\sqrt{u},u^2) \right] - \int fd\mu_1\right| < \varepsilon \quad \forall t \geq 0.
\end{align}

Now, for any $t \geq s_1 + u$ we have
\begin{align*}
\E[f(\xi_t) \cdot \mathds{1}_{\mathcal{S}_1}] - \int f d\mu_1 \cdot \P(\mathcal{S}_1) 
=& \E\left[f(\xi_t)\cdot \left(\mathds{1}_{\mathcal{S}_1} - \mathds{1}_{\{\xi_{t-u} \in G(n,\sqrt{u},u^2)\}} \right)\right]\\&+  \mathbb{E}\left[\left(f(\xi_t) - \int f d\mu_1 \right) \cdot \mathds{1}_{\{ \xi_{t-u} \in G(n,\sqrt{u},u^2)\}} \right] \\&+ \int f d\mu_1 \cdot \left(\P[\xi_{t-u} \in G(n,\sqrt{u},u^2)] - \P(\mathcal{S}_1)\right).
\end{align*}
We bound the absolute values of the three terms on the right-hand side as follows. By \eqref{eq:last_reason1} and \eqref{eq:last_reason2},
$$\left| \E\left[f(\xi_t)\cdot \left(\mathds{1}_{\mathcal{S}_1} - \mathds{1}_{\{\xi_{t-u} \in G(n,\sqrt{u},u^2)\}} \right)\right]\right|\leq \|f\|_\infty \cdot (\P(\mathcal{S}_1) - \P\left[\xi_{t-u} \in G(n,\sqrt{u},u^2) \right]< 2\varepsilon  \|f\|_\infty  $$
and
$$ \left|\int f d\mu_1 \cdot \left(\P[\xi_{t-u} \in G(n,\sqrt{u},u^2)] - \P(\mathcal{S}_1)\right)\right| \leq  2\varepsilon \|f\|_\infty;$$
next, by \eqref{eq:last_reason3},
$$\left| \mathbb{E}\left[\left(f(\xi_t) - \int f d\mu_1 \right) \cdot \mathds{1}_{\{ \xi_{t-u} \in G(n,\sqrt{u},u^2)\}} \right] \right| \leq \varepsilon. $$
This proves that, for any $t \geq s_1 + u$,
$$ \left| \E[f(\xi_t) \cdot \mathds{1}_{\mathcal{S}_1}] - \int f d\mu_1 \cdot \P(\mathcal{S}_1) \right| < \varepsilon + 4\varepsilon \|f\|_\infty,$$
proving \eqref{eq:dominate_part2}.

Let us now prove \eqref{eq:dominate_part3}. If $0 < s < t$, we have
\begin{align*}
\E\left[f(\xi_t) \cdot \mathds{1}_{\mathcal{S}_1^c \cap \mathcal{S}_2}\right] - \int f d\mu_2 \cdot \P(\mathcal{S}_1^c \cap \mathcal{S}_2) =& \mathbb{E}\left[f(\xi_t) \cdot \mathds{1}_{\mathcal{S}_2} \cdot \left(\mathds{1}_{\mathcal{S}_1^c} - \mathds{1}_{\{\xi_s \in \mathscr{A}_2\}} \right) \right]\\[.2cm]
&+ \mathbb{E}\left[\left(f(\xi_t) - \int fd\mu_2\right) \cdot \mathds{1}_{\{\xi_s \in \mathscr{A}_2\}\cap \mathcal{S}_2}   \right] \\[.2cm]
&+ \int f d\mu_2 \left(\P(\{\xi_s \in \mathscr{A}_2\}\cap \mathcal{S}_2) - \P(\mathcal{S}_1^c \cap \mathcal{S}_2 )\right) .
\end{align*}
Since $\mathcal{S}_1^c = \cup_{s\geq 0} \{\xi_s \in \mathscr{A}_2\}$, the first and third terms on the right-hand side can be made arbitrarily small if $s$ is large enough. Next, the second term on the right-hand side is equal to
$$\E\left[\mathds{1}_{\{\xi_s \in \mathscr{A}_2\}} \cdot \left( \E\left[\left.f(\xi_t) \cdot \mathds{1}_{\mathcal{S}_2}\right| \xi_s \right] - \int fd\mu_2 \cdot \P\left[\left. \mathcal{S}_2\right| \xi_s\right]\right) \right] \xrightarrow{t \to \infty} 0,$$
by the complete convergence theorem for the one-type contact process.
\end{proof}

\section{Reversing time, steering paths}
\label{s:ancestor}
So far we have proved our main results assuming the validity of Proposition \ref{prop:cone0}. Proving this proposition will be the focus of our efforts in the remainder of the paper. In this section, we perform three tasks:\begin{itemize}\item First, we state a modified version of Proposition \ref{prop:cone0} (see Proposition \ref{prop:cone0_dual} below) which is more convenient to prove.\item Second, we state a result (Proposition \ref{prop:steer} below) which is our essential tool in proving  Proposition \ref{prop:cone0_dual}. The proof of Proposition \ref{prop:steer} is postponed to Section \ref{s:times} and the Appendix.\item Third, we show how Proposition \ref{prop:steer} implies Proposition \ref{prop:cone0_dual} (though part of this argument is again postponed to the Appendix).\end{itemize}

In what follows, we will often refer to time restrictions and space-time shifts of augmented Harris systems; let us introduce these. Given an augmented Harris system $\mathbb{H}$ and an interval $I \subseteq [0,\infty]$, the \textit{restriction of $\mathbb{H}$ to $I$} is the triple
$$\mathbb{H}_{I}= \left( (D^x_{I}), (D^{x,y}_{I}), (\mathscr{D}^{x,y}_{I}) \right) = \left( (D^x \cap I), (D^{x,y} \cap I), (\mathscr{D}^{x,y} \cap I)\right).$$
Let $\Omega$ be the set of all possible realizations of $\mathbb{H}$. 

Given $(x_0,t_0) \in \Z^d \times [0,\infty)$, we define the \textit{space-time shift of $\mathbb{H}$ by $(x_0,t_0)$} by
$$[\theta(x_0,t_0)](\mathbb{H}) = \left(([\theta(x_0,t_0)](D^x)),\; ([\theta(x_0,t_0)](D^{x,y})),\; ([\theta(x_0,t_0)](\mathscr{D}^{x,y})) \right),$$
where $[\theta(x_0,t_0)](D^x) = \{t-t_0:\; t \in D^{x_0+x} \cap [t_0,\infty)\}$, and similarly for $[\theta(x_0,t_0)](D^{x,y})$ and $[\theta(x_0,t_0)](\mathscr{D}^{x,y})$. If $X = X(\mathbb{H})$ is a function of augmented Harris systems, we denote  $[X \circ \theta(x_0,t_0)](\mathbb{H}) = X([\theta(x_0,t_0)](\mathbb{H}))$. In this notation, we will often omit $\mathbb{H}$ and simply write $X \circ \theta(x_0,t_0)$.

\subsection{Time reversal of Proposition \ref{prop:cone0}}
We start with some definitions. As in the previous section, we fix an augmented Harris system $\mathbb{H} = (H,\mathcal{H})$.
\begin{definition}
A \emph{reverse free basic infection path} (RFBIP) is a basic infection path $\gamma: [t_1,t_2]\to\Z^d$ satisfying
\begin{equation}
s \in [t_1,t_2],\;\gamma(s) \neq \gamma(s-) \quad \Longrightarrow \quad  (\gamma(s-), s+) \not \rsa \Z^d \times \{t_2\}.
\label{eq:prop_reverse}\end{equation}
A \emph{reverse free selective infection path} (RFSIP) is a selective infection path satisfying \eqref{eq:prop_reverse}.
\end{definition}
The reason for using the word `reverse' will be clear in a moment. 
\begin{definition} Define the space-time sets
\begin{align*}
&\mathscr{R}'(x,t,\ell) = [x-\ell,x+\ell]^d \times [t, t+\ell],\quad x \in \Z^d,\;t,\ell \geq 0.\\
&\mathscr{C}'(x,t,\alpha) = \{(y,s) \in \Z^d \times [0,t]:\;\|y-x\| \leq  \alpha(t-s)\},\quad x \in \Z^d,\; t, \alpha \geq 0.
\end{align*}
\end{definition}
We are now ready to state
\begin{proposition}
\label{prop:cone0_dual}
Assume $\lambda_1 > \lambda_2 > \lambda_c$. There exists $\bar{c} > 0$ and $\bbe > 0$ such that the following holds. For any $u > t > 0$, $\ell \in (0, u-t)$ and $x,y \in \Z^d$ with $(x,0) \in \mathscr{C}'(y,t,\bbe)$, we have
\begin{equation*}
\P\left[\left\{(x,0) \not \rsa \Z^d \times \{u\} \right\} \cup \left\{\begin{array}{r} \exists (y',t') \in \mathscr{R}'(y,t,\ell): \;(y',t') \rsa \Z^d \times \{u\} \\[.2cm]\text{and $\exists$ an RFSIP from $(x,0)$ to $(y',t')$} \end{array}\right\}\right] > 1-\exp(-\bar{c}\ell).
\end{equation*}
\end{proposition}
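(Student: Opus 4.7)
The plan is to condition on the non-trivial event $\{(x,0) \rsa \Z^d \times \{u\}\}$ (the other branch of the union in the conclusion holds trivially on its complement) and, on this event, exhibit with probability $1 - e^{-\bar c \ell}$ a space-time point $(y',t') \in \mathscr{R}'(y,t,\ell)$ that satisfies both (i) $(y',t') \rsa \Z^d \times \{u\}$, and (ii) there is an RFSIP from $(x,0)$ to $(y',t')$. These two clauses should be obtained, respectively, from Lemma \ref{lem:cone_finally} and Proposition \ref{prop:steer}.

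For (i), since $(x,0)\rsa\Z^d\times\{u\}$ and $\lambda_2 > \lambda_c$, the BIP-descendants of $(x,0)$ form a supercritical one-type cluster and should occupy, by a renormalization argument in the spirit of Bezuidenhout--Grimmett (as sketched after \eqref{eq:no_cross}), a positive density of space-time points inside the cone $\mathscr{C}(x,0,\bbe_0)$ for any $\bbe_0$ strictly less than the asymptotic infection speed. Taking the constant $\bbe$ in the statement slightly smaller than $\bbe_0$, the block $\mathscr{R}'(y,t,\ell)$ is well inside this cone, so Lemma \ref{lem:cone_finally} should produce a point $(y',t')$ in the block with $(x,0) \rsa (y',t')$ and $(y',t') \rsa \Z^d \times \{u\}$, except on a set of probability $e^{-c\ell}$.

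For (ii), given such a $(y',t')$, Proposition \ref{prop:steer} would be used to upgrade the canonical basic infection path from $(x,0)$ to $(y',t')$ into an RFSIP. The mechanism is that whenever this basic path makes a jump at time $s$ whose source-particle $(\gamma(s-),s+)$ still has descendants at time $t'$, one can steer around the bad source by inserting an extra jump through a fresh selective arrow (of rate $\lambda_1-\lambda_2>0$) and choosing the new intermediate source so that it dies before $t'$. Because the selective Poisson processes are independent of the basic Harris system $H$, the local availability of such a bypass fails with exponentially small probability in the time budget, and this construction can be performed conditionally on the outcome of the step above.

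The main obstacle I expect is the size mismatch between a typical basic path from $(x,0)$ to $(y',t')$, which contains $O(t)$ jumps, and the error budget $e^{-\bar c \ell}$ in which $\ell$ may be much smaller than $t$. Resolving this requires the ``free'' condition \eqref{eq:prop_reverse} to be enforced only at jumps occurring in a final window of length $O(\ell)$ near $(y',t')$: any earlier jump-source has ample time left in $[s,t']$ to die on its own with probability exponentially close to $1$ via \eqref{eq:dies_quickly}, and the $O(t)$ count of earlier jumps is absorbed by their extremely small individual survival probability. This localization is in my view the heart of why the statement can be proved with the $\ell$-dependent error bound $e^{-\bar c \ell}$, and is what the remainder of Section \ref{s:ancestor} and the following sections seem set up to carry out in detail.
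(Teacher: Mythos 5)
Your opening reduction is fine and matches the paper: on $\{(x,0)\not\rsa\Z^d\times\{u\}\}$ the first branch of the union holds trivially, and one conditions on $\{(x,0)\rsa\Z^d\times\{u\}\}$ (the paper then passes further to conditioning on $\{(x,0)\rsa\infty\}$, at a cost $e^{-cu}\le e^{-c\ell}$ controlled by \eqref{eq:dies_quickly}). The gap is in how you produce the RFSIP. You propose to first find $(y',t')$ in the box with a basic infection path from $(x,0)$, and then repair that BIP into an RFSIP, enforcing the free condition \eqref{eq:prop_reverse} only at jumps in a final window of length $O(\ell)$, on the grounds that each earlier jump-source ``has ample time to die on its own with probability exponentially close to $1$ via \eqref{eq:dies_quickly}.'' That estimate does not say this: it bounds $\P[t<T^\Lambda<\infty]$, i.e.\ death, \emph{if it occurs}, occurs quickly. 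Since $\lambda_2>\lambda_c$, a jump-source at $(z,s)$ satisfies $(z,s+)\rsa\Z^d\times\{t'\}$ with probability at least $\P[(0,0)\rsa\infty]>0$, uniformly in $t'-s$. So a positive fraction of the $O(t)$ jumps of a typical BIP violate \eqref{eq:prop_reverse}, no matter how far they are from $t'$, and your localization/union-bound argument collapses. The proposed local surgery (inserting a selective arrow to reroute around a surviving source) faces the same obstruction: the new intermediate source must itself fail to survive to $t'$, which again happens only with probability bounded away from $1$.

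The paper never repairs a BIP. Instead, the point $(y',t')$ is taken to be a renewal point $(S_n,\uptau_n)$ of the steering construction: each increment $(\mathcal{X},\mathcal{T})$ of Proposition \ref{prop:steer} is engineered around a bifurcation time of the ancestor process followed by waiting until one of the two resulting ancestries dies out, and this structure guarantees, case by case, that every jump of the resulting path has a non-surviving source. The RFSIP from $(x,0)$ to $(S_n,\uptau_n)$ therefore exists \emph{by construction} via concatenation \eqref{eq:concatenate_serial}, and $(S_n,\uptau_n)\rsa\infty$ holds by \eqref{eq:join_serial}, which gives the onward connection to $\Z^d\times\{u\}$ after appending the unique RFBIP as in \eqref{eq:dual_conc_prop}. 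Lemma \ref{lem:cone_finally} is then a hitting/tightness estimate for this steered random walk of renewal points — not, as you use it, a density statement about the cluster of BIP-descendants of $(x,0)$. Without the constructive renewal mechanism your two clauses (i) and (ii) cannot be assembled, so the argument as proposed does not go through.
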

To show that this is equivalent to Proposition \ref{prop:cone0}, fix $u>0$ and consider $\mathbb{H}_{[0,u]}= \left( (D^x_{[0,u]}), (D^{(x,y)}_{[0,u]}), (\mathcal{D}^{(x,y)}_{[0,u]}) \right),$ the restriction of $\mathbb{H}$ to the time interval $[0,u]$. We now define $\mathbb{H}^*_{[0,u]}$ as the augmented Harris system on $[0,u]$ defined from $\mathbb{H}_{[0,u]}$ by reversing the sense of time and of the arrows. Formally, we let $$\mathbb{H}^*_{[0,u]}=\left((B^x_{[0,u]}), (B^{(x,y)}_{[0,u]}), (\mathcal{B}^{(x,y)}_{[0,u]}) \right),$$
where
\begin{align*}
&B^x_{[0,u]} = \{t \in [0,u]:\; u-t \in D^x\},\quad x \in \Z^d,\\[.2cm]
&B^{(x,y)}_{[0,u]} = \{t \in [0,u]:\; u-t \in D^{(y,x)}\},\quad x,y \in \Z^d, \; 0< \|x-y\| \leq R,\\[.2cm]
&\mathcal{B}^{(x,y)}_{[0,u]} = \{t \in [0,u]:\; u-t \in \mathscr{D}^{(y,x)}\},\quad x,y \in \Z^d, \; 0< \|x-y\| \leq R\end{align*}
respectively give the sets of death marks, arrows and selective arrows of $\mathbb{H}^*_{[0,u]}$. Given a function $\gamma: [t_1,t_2] \to \Z^d$ with $0\leq t_1 \leq t_2 \leq u$, define $\gamma^*: [u-t_2,u-t_1] \to \Z^d$ by setting $\gamma^*(t) = \gamma(u-t)$ for each $t$. Then, it is readily seen that $\gamma$ is respectively a BIP, SIP, RFBIP, or RFSIP with respect to $\mathbb{H}$ if and only if $\gamma^*$ is respectively a BIP, SIP, FBIP, or FSIP with respect to $\mathbb{H}^*_{[0,u]}$. This, together with the fact that $\mathbb{H}_{[0,u]}$ and $\mathbb{H}^*_{[0,u]}$ have the same distribution, implies the equivalence between Propositions \ref{prop:cone0} and \ref{prop:cone0_dual}.

It will be useful to note that, as a consequence of Lemma \ref{lem:unique_FBIP} (applied to $\mathbb{H}^*_{[0,u]}$) we have that, in $\mathbb{H}$,
\begin{equation}\label{eq:dual_unique_prop}
\begin{array}{ll}\forall x \in \Z^d,\; 0 \leq t_1 \leq t_2,&\text{either } (x,t_1) \not \rsa \Z^d \times \{t_2\} \\&\text{ or there is a unique RFBIP from $(x,t_1)$ to $\Z^d \times \{t_2\}$.}\end{array}
\end{equation}
In order to find the unique RFBIP mentioned in \eqref{eq:dual_unique_prop}, one can follow a procedure that is a time reversal of what was explained after Lemma \ref{lem:unique_FBIP}. Namely, start with an arbitrary BIP $\upgamma$ from $(x,t_1)$ to $\Z^d \times \{t_2\}$, consider the \textit{smallest} jump time $s$ for which \eqref{eq:prop_reverse} is violated in $\upgamma$, take another BIP $\hat{\upgamma}$ from $(\upgamma(s), s+)$ to $\Z^d \times \{t_2\}$, define $\upgamma_1 = \upgamma \cdot \mathds{1}_{[t_1,s]} + \hat{\upgamma} \cdot \mathds{1}_{(s,t_2]}$, and so on.

As a consequence of Lemma \ref{lem:concatenation}, we have that
\begin{equation}\label{eq:dual_conc_prop}
\begin{split}&\text{if }\gamma_1: [t_1,t_2] \to \Z^d,\; \gamma_2: [t_2,t_3] \to \Z^d \text{ are RFSIP's with $\gamma_1(t_2) = \gamma_2(t_2)$, then }\\&  \gamma = \gamma_1 \mathds{1}_{[t_1,t_2]} + \gamma_2 \mathds{1}_{(t_2,t_3]}\text{ is an RFSIP; if $\gamma_1,\gamma_2$ are RFBIP's, then $\gamma$ is an RFBIP}.
\end{split}
\end{equation}
It is often fruitful to consider the two systems $\mathbb{H}_{[0,u]}$ and $\mathbb{H}^*_{[0,u]}$ jointly and exploit duality-type relations between them. However, we will not need to do so in the rest of the paper. From now on, we will have a single augmented Harris system $\mathbb{H}$ (defined on $[0,\infty)$) and will  work on proving that the set of BIP's, SIP's, RFBIP's and RFSIP's of $\mathbb{H}$ are such that Proposition \ref{prop:cone0_dual} is satisfied. In particular, we will use properties \eqref{eq:dual_unique_prop} and \eqref{eq:dual_conc_prop} without making reference to a time-reversed copy of the augmented Harris system.

\subsection{Steering reverse free selective infection paths}
The essential tool in our proof of Proposition \ref{prop:cone0_dual} will be the following. We denote by $e_1,\ldots, e_d$ the canonical vectors of $\mathbb{Z}^d$.
\begin{proposition}
\label{prop:steer}
Assume $\lambda_1 > \lambda_2 > \lambda_c$. On the event $\{(0,0) \rightsquigarrow \infty\}$, there exist random variables $\mathcal{T} \in [0,\infty)$ and $\mathcal{X} \in \mathbb{Z}^d$ such that
\begin{itemize}
\item[(1)] there is an RFSIP from $(0,0)$ to $(\mathcal{X}, \mathcal{T})$;
\item[(2)] for any events $E_1$ and $E_2$ on augmented Harris systems,
\begin{equation}\begin{split}
&\P\left[\mathbb{H}_{[0,\mathcal{T}]} \in E_1,\; \mathbb{H}\circ \theta(\mathcal{X},\mathcal{T}) \in E_2 \mid (0,0) \rsa \infty\right] \\&\hspace{2cm}= {\P}\left[\mathbb{H}_{[0,\mathcal{T}]} \in E_1\mid (0,0) \rsa \infty\right] \cdot {\P}\left[\mathbb{H} \in E_2\mid (0,0) \rsa \infty\right];\end{split}
\end{equation}
\item[(3)] if $\sigma > 0$ is small enough, $${{\mathbb{E}}[\exp(\sigma \mathcal{T})\mid (0,0) \rsa \infty] < \infty,\;\; {\mathbb{E}}[\exp(\sigma \cdot \|\mathcal{X}\|)\mid (0,0) \rsa \infty] < \infty;}$$
\item[(4)] ${\mathbb{E}}[\mathcal{X}\mid (0,0) \rsa \infty] = \alpha e_1,$
where $\alpha > 0$.
\end{itemize}
\end{proposition}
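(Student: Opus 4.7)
The construction of $(\mathcal{X},\mathcal{T})$ follows the spirit of Kuczek's break-point construction for the supercritical one-type contact process, adapted to RFSIPs and biased in the $+e_1$ direction to produce the drift required in part~(4). At the macroscopic level, the idea is to build $(\mathcal{X},\mathcal{T})$ as the outcome of the first of a sequence of ``trials'', each attempting to push the infection by $Le_1$ in time $T$ via an RFSIP that also serves as a Kuczek funnel.

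I would fix macroscopic parameters $L>0$ and $T>0$ and, for each space-time starting point $(z,s)$, define a \emph{trial good event} $\mathcal{G}(z,s)$, measurable with respect to $\mathbb{H}$ restricted to a finite space-time box surrounding the corridor from $(z,s)$ to $(z+Le_1,s+T)$. The event $\mathcal{G}(z,s)$ would enforce: (i) there exists an RFBIP (a fortiori an RFSIP) from $(z,s)$ to $(z+Le_1,s+T)$ lying entirely inside the corridor; (ii) every BIP from $(z,s)$ alive at time $s+T$ ends at $(z+Le_1,s+T)$ (the Kuczek funnel property); (iii) no BIP from $(z,s)$ leaves the spatial box during $[s,s+T]$. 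Using Bezuidenhout--Grimmett renormalization for the rate-$\lambda_2$ BIP cluster (valid because $\lambda_2>\lambda_c$), combined with a few prescribed selective arrows to secure the RFSIP to the designated target, I would show $\P(\mathcal{G}(z,s))>0$ uniformly in $(z,s)$ by translation invariance.

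Starting from $(z_0,s_0)=(0,0)$, I would then iterate: check $\mathcal{G}(z_n,s_n)\cap\{(z_n+Le_1,s_n+T)\rsa\infty\}$; if successful, set $(\mathcal{X},\mathcal{T})=(z_n+Le_1,s_n+T)$, otherwise use the surviving part of the BIP cluster from $(z_n,s_n)$ to select a fresh base $(z_{n+1},s_{n+1})$ with $(z_{n+1},s_{n+1})\rsa\infty$ and with its forward Harris-system region disjoint from what has already been examined. Using the disjointness for independence across trials and the uniform positive success probability, the number of trials $N$ is geometric on $\{(0,0)\rsa\infty\}$, giving $\mathcal{T}$ and $\|\mathcal{X}\|$ exponential moments. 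Property~(1) is immediate by construction and Lemma~\ref{lem:concatenation}; property~(3) follows from the geometric tail of $N$ and the linear scaling $\mathcal{T}\lesssim NT$, $\|\mathcal{X}\|\lesssim NL$; property~(4) follows because each successful trial contributes $+Le_1$, and the restart mechanism can be chosen rotationally symmetric about the $e_1$-axis so its mean contribution to $\mathcal{X}$ is also a positive multiple of $e_1$. Property~(2) is the conceptual heart: the Kuczek funnel~(ii) forces $\{(0,0)\rsa\infty\}$ to factorize, given the pre-$\mathcal{T}$ data, as $\{\text{pre-$\mathcal{T}$ success event}\}\cap\{(\mathcal{X},\mathcal{T})\rsa\infty\}$, where the second factor under the space-time shift $\theta(\mathcal{X},\mathcal{T})$ is the event $\{(0,0)\rsa\infty\}$ in $\mathbb{H}\circ\theta(\mathcal{X},\mathcal{T})$; this factorization, combined with the disjointness of the pre- and post-$\mathcal{T}$ Harris-system regions used, yields the product identity in part~(2).

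The main obstacle will be the rigorous implementation of the restart after a failed trial: the fresh base $(z_{n+1},s_{n+1})$ must be identified using only Harris-system information already inspected by the failed trial, must itself satisfy $(z_{n+1},s_{n+1})\rsa\infty$, and must have its outgoing region disjoint from what has been seen. A naive restart (for instance, waiting a deterministic time) either fails to produce a surviving descendant or re-inspects regions of $\mathbb{H}$, destroying the independence needed for part~(2). The resolution I expect is an explicit exploration procedure along the trialed corridors, terminating each failure at a well-chosen break point on a surviving BIP from which a fresh independent trial can be launched. The detailed construction and its probability estimates are deferred to Section~\ref{s:times} and the first part of the Appendix.
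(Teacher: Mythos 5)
Your overall architecture (a Kuczek-style break point: repeated local trials, restart on failure, independence of the pre- and post-renewal parts of $\mathbb{H}$, geometric number of trials giving exponential moments) is the same as the paper's, and your treatment of properties (2) and (3) is in the right spirit. The proof, however, has a genuine gap at the one place where something beyond the standard one-type break-point construction is needed, namely property (4).

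\textbf{The drift cannot come from an RFBIP funnel.} By \eqref{eq:dual_unique_prop}, the only point reachable from $(0,0)$ at time $t$ by an RFBIP is the ancestor $\eta_t$. The law of $H$ is invariant under the reflection $x\mapsto -x$, the event $\{(0,0)\rsa\infty\}$ is invariant, and the ancestor process is equivariant, so $\E[\eta_t\mid (0,0)\rsa\infty]=0$ for every $t$. Consequently \emph{any} renewal point reached by basic arrows alone must satisfy $\E[\mathcal{X}\mid(0,0)\rsa\infty]=0$: writing $0=\P(\mathcal{G})\,\E[\cdot\mid\mathcal{G}]+\P(\mathcal{G}^c)\,\E[\cdot\mid\mathcal{G}^c]$ shows that the $+Le_1$ you gain on the funnel event is \emph{exactly} cancelled by the (necessarily negative) conditional mean of the restart displacement on $\mathcal{G}^c$; your assertion that a ``rotationally symmetric'' restart contributes a \emph{positive} multiple of $e_1$ is not justified and is in fact false. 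The strict drift must be manufactured from the selective arrows, i.e.\ from $\lambda_1-\lambda_2>0$; your phrase ``a few prescribed selective arrows to secure the RFSIP to the designated target'' does not explain how. The paper does this by locating a \emph{bifurcation} of the ancestor at which the ancestral site $y$ dies after emitting one arrow to $y-e_1$ at time $t_{(-)}$ and one to $y+e_1$ at time $t_{(+)}$; the RFSIP may follow either branch (the jump off $y$ is reverse-free because $y$ dies), the branch actually chosen is the one whose descendant survives (Ingredient 2), and the tie between the two symmetric alternatives is broken by the presence of a selective arrow towards $+e_1$, which pushes $\P[E\cup\{t_{(+)}>t_{(-)}\}]$ strictly above $\tfrac12$ and yields $\alpha>0$ in \eqref{eq:ingredients3}. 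Some mechanism of this kind is indispensable and is absent from your plan.

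\textbf{The restart must respect the reverse-free constraint.} You correctly identify the restart as the main obstacle for independence, but there is a second difficulty you do not mention: the fresh base $(z_{n+1},s_{n+1})$ must be reachable from $(0,0)$ by a path that is reverse-free \emph{relative to the final time} $\mathcal{T}$, and by \eqref{eq:dual_unique_prop} the only such point accessible by basic arrows at a given time is the ancestor. So the failed-trial bases cannot be arbitrary points of a surviving BIP; they must be taken along the ancestor process of $(0,0)$ (this is exactly why the paper phrases Ingredient 1 in terms of $\eta$ and defines $U_{k+1}$ as the next bifurcation time of $\eta$ after $V_k$), and the concatenation \eqref{eq:dual_conc_prop} must be checked at each junction. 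Note also that after a funnel trial that succeeds locally but whose target dies, the entire BIP cluster of $(z_n,s_n)$ dies, so on $\{(0,0)\rsa\infty\}$ the next base necessarily lies outside that cluster; your scheme of restarting from ``the surviving part of the BIP cluster from $(z_n,s_n)$'' does not produce such a point. These two issues are fixable, but they are precisely the content of Section \ref{s:times}, not routine details.
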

The proof of Proposition \ref{prop:steer} will be carried out in the next section and the Appendix. In the remainder of this section, we show how it is employed to prove Proposition \ref{prop:cone0_dual}.

\begin{definition} Given $i\in\{1,\ldots, d\}$ and $\kappa \in \{-1,1\}$, on the event $\{(0,0) \rsa \infty\}$ we define $(\mathcal{X}^{i,\kappa},\mathcal{T}^{i,\kappa}) \in \Z^d \times [0,\infty)$ by
$$(\mathcal{X}^{i,\kappa}(\mathbb{H}),\mathcal{T}^{i,\kappa}(\mathbb{H}))= (\mathcal{X}(\psi^{(i,\kappa)}(\mathbb{H})),\mathcal{T}(\psi^{(i,\kappa)}(\mathbb{H}))),$$
where $\psi^{(i,\kappa)} : \R^d \to \R^d$ is the linear transformation given by 
\begin{equation}\label{eq:linear_psi}\psi^{(i,\kappa)}(e_1) = \kappa e_i,\quad \psi^{(i,\kappa)}(e_i) = \kappa e_1,\quad \psi^{(i,\kappa)}(e_j) = e_j \text{ for all }j \notin \{1,i\}.\end{equation}\end{definition} 
Note that $(\mathcal{X}^{(1,1)},\mathcal{T}^{(1,1)}) = (\mathcal{X},\mathcal{T})$. Since $\mathbb{H}$ and $\psi^{(i,\kappa)}(\mathbb{H})$ have the same distribution, $(\mathcal{X}^{(i,\kappa)}, \mathcal{T}^{(i,\kappa)})$ satisfies properties (1)-(3) of Proposition \ref{prop:steer}, and property (4) is replaced by
\begin{equation}\label{eq:kappa_e_i}
{\mathbb{E}}[\mathcal{X}^{(i,\kappa)}\mid (0,0) \rsa \infty] =  \kappa \alpha e_i.
\end{equation}
Moreover, the distributions of $\mathcal{T}$ and $\mathcal{T}^{(i,\kappa)}$ are the same.
\begin{definition}
Let $\vec{\kappa} \in \{-1,1\}^d$. On the event $\{(0,0) \rsa\infty\}$, we define a random space-time point $(\mathcal{X}^{\vec{\kappa}},\mathcal{T}^{\vec{\kappa}})$ as follows. First define the following vectors recursively:
$$(x_1,t_1) = (\mathcal{X}^{1,\kappa_1},\mathcal{T}^{1,\kappa_1}),\quad  (x_i,t_i) = (x_{i-1},t_{i-1}) + (\mathcal{X}^{i,\kappa_i},\mathcal{T}^{i,\kappa_i}) \circ \theta(x_{i-1},t_{i-1}),\;\; i \in \{2,\ldots, d\}.$$
Then, put $(\mathcal{X}^{\vec{\kappa}},\mathcal{T}^{\vec{\kappa}}) = (x_d,t_d)$. \end{definition} The following is then an immediate consequence of Proposition \ref{prop:steer} and the identity \eqref{eq:kappa_e_i}.
\begin{corollary}\label{cor:steer}
Assume $\lambda_1 > \lambda_2 > \lambda_c$.  For any $\vec{\kappa} = (\kappa_1,\ldots, \kappa_d) \in \{-1,1\}^{d}$,
\begin{itemize}
\item[(1)] there is an RFSIP from $(0,0)$ to $(\mathcal{X}^{\vec{\kappa}}, \mathcal{T}^{\vec{\kappa}})$;
\item[(2)] for any events $E_1$ and $E_2$ on augmented Harris systems,
\begin{equation}\begin{split}
&\P\left[\mathbb{H}_{[0,\mathcal{T}^{\vec{\kappa}}]} \in E_1,\; \mathbb{H}\circ \theta(\mathcal{X}^{\vec{\kappa}},\mathcal{T}^{\vec{\kappa}}) \in E_2 \mid (0,0) \rsa \infty\right] \\&\hspace{2cm}= {\P}\left[\mathbb{H}_{[0,\mathcal{T}^{\vec{\kappa}}]} \in E_1\mid (0,0) \rsa \infty\right] \cdot {\P}\left[\mathbb{H} \in E_2\mid (0,0) \rsa \infty\right];\end{split}
\end{equation}
\item[(3)] {$\displaystyle{{\mathbb{E}}[\exp(\sigma \mathcal{T}^{\vec{\kappa}})\mid (0,0) \rsa \infty] < \infty,\;\; {\mathbb{E}}[\exp(\sigma \cdot \|\mathcal{X}^{\vec{\kappa}}\|)\mid (0,0) \rsa \infty] < \infty;}$}
\item[(4)] ${\mathbb{E}}[\mathcal{X}^{\vec{\kappa}}\mid (0,0) \rsa \infty] = \alpha \sum_{i=1}^d\kappa_i\cdot e_i;$
\item[(5)] the distribution of $$\left(|\mathcal{X}^{\vec{\kappa}} \cdot e_1|, \ldots, |\mathcal{X}^{\vec{\kappa}} \cdot e_d|, \mathcal{T}^{\vec{\kappa}}\right)$$ does not depend on $\vec{\kappa}$.
\end{itemize}
\end{corollary}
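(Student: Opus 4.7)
My plan is to establish all five properties by induction on $i \in \{1,\ldots, d\}$, iterating Proposition \ref{prop:steer} $d$ times through the recursive definition $(x_i, t_i) = (x_{i-1}, t_{i-1}) + (\mathcal{X}^{i,\kappa_i}, \mathcal{T}^{i,\kappa_i}) \circ \theta(x_{i-1}, t_{i-1})$. For the base case $i = 1$, I would apply Proposition \ref{prop:steer} directly through the linear symmetry $\psi^{(1,\kappa_1)}$: since this map is an involution of $\Z^d$ fixing the origin under which the distribution of $\mathbb{H}$ is invariant, the event $\{(0,0) \rsa \infty\}$ is preserved, and any RFSIP in $\psi^{(1,\kappa_1)}(\mathbb{H})$ corresponds, by pushforward through $\psi^{(1,\kappa_1)}$, to an RFSIP in $\mathbb{H}$ with the correct endpoint. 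This transfers properties (1)--(3) to $(x_1, t_1) = (\mathcal{X}^{1,\kappa_1}, \mathcal{T}^{1,\kappa_1})$, while \eqref{eq:kappa_e_i} gives the analogue of (4).

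For the inductive step at stage $i$, I would first appeal to the independence statement of Proposition \ref{prop:steer}(2) at stage $i-1$: conditionally on $\{(0,0) \rsa \infty\}$, the shifted system $\mathbb{H} \circ \theta(x_{i-1}, t_{i-1})$ is independent of $\mathbb{H}_{[0, t_{i-1}]}$ and has the conditional law of $\mathbb{H}$ given $\{(0,0) \rsa \infty\}$. In particular, $(x_{i-1}, t_{i-1}) \rsa \infty$ holds almost surely on $\{(0,0) \rsa \infty\}$, so that the $i$-th increment $(\mathcal{X}^{i,\kappa_i}, \mathcal{T}^{i,\kappa_i}) \circ \theta(x_{i-1}, t_{i-1})$ is well defined. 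Applying the base-case argument to this shifted system produces an RFSIP in $\mathbb{H}$ from $(x_{i-1}, t_{i-1})$ to $(x_i, t_i)$, which I would concatenate with the RFSIP from the inductive hypothesis via property \eqref{eq:dual_conc_prop} to obtain (1); a second appeal to Proposition \ref{prop:steer}(2) at stage $i$ then yields the inductive version of (2). At $i = d$, properties (3) and (4) follow by writing $\mathcal{T}^{\vec{\kappa}} = \sum_{i=1}^d (\mathcal{T}^{i,\kappa_i} \circ \theta(x_{i-1}, t_{i-1}))$ and $\mathcal{X}^{\vec{\kappa}} = \sum_{i=1}^d (\mathcal{X}^{i,\kappa_i} \circ \theta(x_{i-1}, t_{i-1}))$ as sums of $d$ conditionally independent increments: linearity of expectation combined with \eqref{eq:kappa_e_i} gives (4), while the elementary fact that a finite sum of independent variables with exponential moments again has one (for a possibly smaller $\sigma$) gives (3).

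For property (5), my plan is to exploit the distributional invariance of $\mathbb{H}$ under each coordinate reflection $\phi_j : e_j \mapsto -e_j$ and to argue that flipping a single sign $\kappa_j$ in the recursive construction produces a random vector distributed as $(\phi_j(\mathcal{X}^{\vec{\kappa}}), \mathcal{T}^{\vec{\kappa}})$; since $\phi_j$ only negates the $e_j$-coordinate and leaves $\mathcal{T}$ invariant, taking absolute values kills the $\vec{\kappa}$-dependence. The main technical obstacle throughout is the careful bookkeeping: at each stage one must verify that the successive shifts $\theta(x_{i-1}, t_{i-1})$ preserve the conditioning $\{(0,0) \rsa \infty\}$ (which is precisely what Proposition \ref{prop:steer}(2) provides) and that the linear symmetries $\psi^{(i,\kappa_i)}$ correctly transport RFSIPs between $\mathbb{H}$ and its $\psi$-image. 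Modulo this bookkeeping, the corollary is a direct consequence of Proposition \ref{prop:steer} iterated $d$ times, as the paper advertises.
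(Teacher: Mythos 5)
Your proposal is correct and is precisely the argument the paper intends: the paper gives no written proof, declaring the corollary an immediate consequence of Proposition~\ref{prop:steer} and \eqref{eq:kappa_e_i}, and your $d$-fold iteration --- using part (2) for the renewal/independence structure, \eqref{eq:dual_conc_prop} to concatenate the RFSIPs, the symmetries $\psi^{(i,\kappa_i)}$ for the drift directions, and summation of independent increments for (3) and (4) --- is exactly the intended fleshing-out. The only step needing a touch more care than your sketch suggests is (5): conjugating a global reflection $\phi$ through the coordinate swaps gives $\phi\,\psi^{(i,\kappa_i)}=\psi^{(i,\kappa_i')}\rho_i$ with $\rho_i$ a reflection \emph{fixing} $e_1$, so besides the distributional invariance of $\mathbb{H}$ one also uses that the law of $(\mathcal{X},\mathcal{T})$ is invariant under reflections of the coordinates $e_2,\dots,e_d$ (true, since the bifurcation construction singles out only the $e_1$ direction).
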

\begin{definition} Given $x \in \Z^d$, on the event $\{(x,0) \rsa \infty\}$, define a sequence $(S_n,\uptau_n)$ as follows. Let $(S_0,\uptau_0) = (x,0)$ and, recursively,
$$(S_{n+1},\uptau_{n+1}) = (S_n, \uptau_n) + (\mathcal{X}^{\vec{\kappa}^{(n)}},\mathcal{T}^{\vec{\kappa}^{(n)}}) \circ \theta(S_n,\uptau_n), \qquad n \geq 0,$$
where $\vec{\kappa}^{(n)} \in \{-1,1\}^d$ is defined by
$$\vec{\kappa}^{(n)} \cdot e_i = \begin{cases} 1&\text{if } S_n \cdot e_i\leq 0;\\-1&\text{if } S_n \cdot e_i > 0,\end{cases} \qquad i = 1,\ldots, d.$$ \end{definition}
By Corollary \ref{cor:steer}, $(\uptau_n)_{n \geq 0}$ is a renewal sequence, and $(S_n)_{n\geq 0}$ is a Markov chain on $\Z^d$ such that, in each coordinate, from outside the origin, the step distribution has a drift in the direction of the origin. We will need two properties of $(S_n,\uptau_n)$ in what follows. First, for each $n$ there is an RFSIP from $(S_n,\uptau_n)$ to $(S_{n+1},\uptau_{n+1})$; hence, concatenating as in \eqref{eq:dual_conc_prop}, 
\begin{equation}\label{eq:concatenate_serial} \text{for each $n$ there exists an RFSIP $\upgamma:[0,\uptau_n] \to \Z^d$  with $\upgamma(0) = x$ and $\upgamma(\uptau_n) = S_n$.}\end{equation}
Second, by part (2) of Corollary \ref{cor:steer}, for each $n$, under $\mathbb{P}(\cdot \mid (x,0) \rsa \infty)$, the distribution of $\mathbb{H} \circ (S_n,\uptau_n)$ is equal to $\mathbb{P}(\cdot \mid (0,0) \rsa \infty)$.  Consequently, \begin{equation} \label{eq:join_serial}\P[(S_n,\uptau_n) \rsa \infty \;\forall n \mid (x,0) \rsa \infty] = 1.\end{equation}

The following is a tightness-type result for the sequence $(S_n, \uptau_n)$.
\begin{lemma}\label{lem:cone_finally}
Assume $\lambda_1 > \lambda_2 > \lambda_c$. There exists $\bar{c}_0 > 0$ and $\bbe > 0$ such that the following holds. For any $u > t > 0$, $\ell \in (0, u-t)$ and $x,y \in \Z^d$ with $(x,0) \in \mathscr{C}'(y,t,\bbe)$, if $(S_0,\uptau_0) = (x,0)$,
\begin{equation}\label{eq:good_n_steer}
\P\left[\exists n: (S_n,\uptau_n) \in \mathscr{R}'(y,t,\ell)\mid (x,0) \rsa \infty\right] > 1-\exp(-\bar{c}_0\ell).
\end{equation}
\end{lemma}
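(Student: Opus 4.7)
The plan is to use translation invariance of the augmented Harris system $\mathbb{H}$ to reduce to the canonical case $y = 0$, after which the hypothesis becomes $\|x\| \leq \bbe t$, the walk $(S_n)$ drifts in each coordinate toward the origin with magnitude $\alpha$, and the goal is to show it lands in $B_0(\ell) \times [t, t+\ell]$ with probability $\geq 1 - e^{-\bar c_0 \ell}$. Concretely, shifting $\mathbb{H}$ by $-y$ identifies the walk with steering rule $\vec\kappa^{(n)}$ based on $\mathrm{sign}((S_n - y)\cdot e_i)$ (attracting $S_n$ to $y$, started from $x$) with the walk attracting $S_n$ to $0$ started from $x - y$, using invariance of the law of $\mathbb{H}$ under spatial translation. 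The essential inputs from Corollary \ref{cor:steer} are exponential moments for both $\mathcal{T}^{\vec\kappa}$ and $\|\mathcal{X}^{\vec\kappa}\|$, a deterministic spatial drift $\alpha \sum_i \kappa_i e_i$ pointing toward $0$, and the common mean $\bar T := \E[\mathcal T^{\vec\kappa} \mid (0,0) \rsa \infty]$ (independent of $\vec\kappa$ by part (5)). I fix $\bbe$ small enough that $\bbe \bar T < \alpha / 4$, ensuring the walk has time to thermalize around the origin well before time $t$.

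The proof proper has two prongs. First, an exponential Lyapunov estimate: for $V(s) = e^{\sigma \|s\|_\infty}$ with $\sigma > 0$ small, exponential moments of the step and drift toward $0$ give $\E[V(S_{n+1}) \mid \mathscr{F}_n] \leq \rho V(S_n) + K$ for some $\rho \in (0,1)$ and $K < \infty$. Iterating, $\E[V(S_n)] \leq \rho^n V(x) + K/(1 - \rho)$, which is $O(1)$ uniformly for $n \geq t/(2 \bar T)$, provided $\sigma \bbe < (-\log \rho)/(2\bar T)$; Markov's inequality then gives $\P[\|S_n\| > \ell] \leq C e^{-\sigma \ell}$ for each such $n$. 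Second, a renewal estimate: exponential moments of $\mathcal T^{\vec\kappa}$ and standard large deviation bounds imply that with probability $\geq 1 - e^{-c\ell}$ some $\uptau_n$ lies in $[t, t+\ell]$, and with probability $\geq 1 - e^{-ct}$ every such $n$ lies in $[t/(2\bar T), 2t/\bar T]$. A union bound then yields
\[ \P\left[\exists n : (S_n, \uptau_n) \in B_0(\ell) \times [t, t+\ell]\right] \geq 1 - Cte^{-\sigma \ell} - e^{-c \ell}. \]

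The main obstacle will be absorbing the $t$ pre-factor from the union bound into the final exponential $e^{-\bar c_0 \ell}$. For $\ell \gtrsim \log t$ this is immediate; for smaller $\ell$ the stated bound $1 - e^{-\bar c_0 \ell}$ is either trivially satisfied or can be handled by evaluating $V$ at the stopping time $N := \min\{n : \uptau_n \geq t\}$, via the shifted supermartingale $V(S_n)\rho^{-n} - K\sum_{k < n} \rho^{-(k+1)}$ together with the strong Markov property and exponential-moment control of the overshoot $\uptau_N - t$. A secondary technicality is verifying the Lyapunov inequality uniformly near the origin, where $\vec\kappa^{(n)}$ flips sign; this follows from the exponential moments of Corollary \ref{cor:steer}(3) but needs a brief separate argument covering the "switching" steps.
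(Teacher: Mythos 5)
Your reduction to $y=0$ by translation invariance, the Foster--Lyapunov bound at deterministic indices, and the overshoot/renewal control are all sound, and together they play roughly the role of Lemma \ref{lem:right_line} and Lemma \ref{lem:large_deviations} in the paper's route (which goes through the one-dimensional statement, Proposition \ref{prop:cone}, applied to each coordinate $(S_n\cdot e_i,\uptau_n)$ with a union bound over $i$). The difficulty you flag at the end, however, is not a technicality but the heart of the lemma, and your proposed fix does not close it. The union bound over the $\asymp t$ indices $n$ for which $\uptau_n$ could lie in $[t,t+\ell]$ leaves a prefactor $t$ that cannot be absorbed when $1 \ll \ell \ll \log t$ --- a regime the statement must cover, since $\ell$ ranges over all of $(0,u-t)$ and $\bar c_0$ must be uniform in $t$. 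Optional stopping of $M_n = V(S_n)\rho^{-n} - K\sum_{k<n}\rho^{-(k+1)}$ at $N=\min\{n:\uptau_n\geq t\}$ only yields $\E[\rho^{-N}V(S_N)]\leq V(S_0)+\tfrac{K}{1-\rho}\,\E[\rho^{-N}]$; since $N\approx t/\bar T$, the factor $\E[\rho^{-N}]$ is exponentially large in $t$, and the resulting bound on $\E[V(S_N)]$ is useless. There is no cheap way around this: a geometric drift condition controls $V$ at deterministic times, not at an arbitrary stopping time (the first time $V$ exceeds $e^{\sigma\ell}$ is also a stopping time), $N$ cannot be decoupled from $S_N$ because $\mathcal X^{\vec\kappa}$ and $\mathcal T^{\vec\kappa}$ are dependent within each step, and you cannot aim for the window $[t,t+\ell]$ with a deterministic index, since $\uptau_n$ fluctuates around $n\bar T$ at scale $\sqrt t\gg\ell$.

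The paper closes exactly this gap with a multiscale construction (Lemma \ref{lem:straight} and the boxes $B_i$ in the proof of Proposition \ref{prop:cone}): one chains space-time boxes of spatial half-width $\ell+k+1-i$ located at times $t_i$ whose successive gaps $(\ell+k-i)^4$ are only polynomial in the current box size, so the union bound inside each stage is absorbed into $\exp(-c(\ell+k-i))$ and the total failure probability telescopes to $\sum_i \exp(-c(\ell+k-i)) \le C\exp(-c\ell)$, with no $t$-dependence. You would need this idea (or an equivalent regeneration-at-shrinking-scales argument) to finish; as written your argument proves the lemma only for $\ell \gtrsim \log t$. As a minor remark, working coordinate-by-coordinate as the paper does also removes the $\ell^\infty$-Lyapunov technicality near the switching hyperplanes that you mention.
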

Since this result is more about random walks embedded in renewal times than it is about the multitype contact process, we deal with it in the Appendix. (Lemma \ref{lem:cone_finally} follows from Proposition \ref{prop:cone} in the Appendix. Note that Proposition \ref{prop:cone} assumes that the spatial coordinate is one-dimensional; in order to obtain Lemma \ref{lem:cone_finally}, we must apply Proposition \ref{prop:cone} to $(S_n \cdot e_i, \uptau_n)_{n\geq 0}$ for each $i$, together with a union bound).

\begin{proof}[Proof of Proposition \ref{prop:cone0_dual}] Fix $t,u,\ell,x,y$ as in the statement of the proposition. It suffices to prove that there exist $c > 0$ and $\bbe > 0$ such that
\begin{equation}\label{eq:the_prob}
\P\left[\left.\begin{array}{r}\exists (y',t') \in \mathscr{R}'(y,t,\ell): (y',t') \rsa \Z^d \times \{u\}\\[.2cm] \text{ and  $\exists$ an RFSIP from $(x,0)$ to $(y',t')$} \end{array} \right| (x,0) \rsa \Z^d \times \{u\}\right] > 1 - \exp(-c\ell).
\end{equation}
Let $E$ be the event inside the conditional probability. We start by bounding:
\begin{align}\nonumber
\P[E\mid(x,0) \rsa \Z^d \times \{u\}] &\geq \P[(x,0) \rsa \Z^d \times \{u\}]^{-1}\cdot \P\left[E \cap \{(x,0) \rsa \infty\}\right]\\[.2cm]
&= \frac{\P[(x,0) \rsa \infty]}{\P[(x,0) \rsa \Z^d \times \{u\}]} \cdot \P\left[E \mid (x,0) \rsa \infty\right]. \label{eq:refer_terms}
\end{align}
Next, we bound
\begin{align*}
\frac{\P[(x,0) \rsa \infty]}{\P[(x,0) \rsa \Z^d \times \{u\}]} = 1- \frac{\P[(x,0) \rsa \Z^d \times \{u\},\; (x,0) \not \rsa \infty]}{ \P[(x,0) \rsa \Z^d \times \{u\}]} \stackrel{\eqref{eq:dies_quickly}}{\geq} 1- \frac{\exp(-c\cdot u)}{\P[(0,0) \rsa \infty]}.
\end{align*}

Let us treat the second term in \eqref{eq:refer_terms}. Assume that there exists $n$ as in the conditional probability in \eqref{eq:good_n_steer}; let us show that $E$ then occurs. As noted in \eqref{eq:concatenate_serial}, there is an RFSIP from $(x,0)$ to $(S_n, T_n)$. Then, by \eqref{eq:good_n_steer}, we have $(S_n,T_n) \rsa \infty$, so $(S_n,T_n) \rsa \Z^d \times \{u\}$, so by \eqref{eq:dual_unique_prop} there is a unique RFBIP from $(S_n,T_n)$ to $\Z^d \times \{u\}$. The conclusion follows by concatenating the RFSIP and the RFBIP, as in \eqref{eq:dual_conc_prop}.
\end{proof}

\section{Ancestor process and renewal-type random times}
\label{s:times}

In this section we prove Proposition \ref{prop:steer}, the building block of our steering procedure. \textit{Throughout this section, we assume that $\lambda_1 > \lambda_2 > \lambda_c$,} since this condition is assumed in Proposition \ref{prop:steer}. We start defining an auxiliary process, first introduced and studied in \cite{neuhauser}, which will be a useful tool in our proofs.

\subsection{Ancestor process}
\begin{definition}
Given $(x,s) \in \Z^d \times [0,\infty)$, the \emph{ancestor process} of $(x,s)$, denoted $(\eta^{(x,s)}_t)_{t \geq s}$, is the process taking values on $\Z^d \cup \{\triangle\}$ defined as follows. For each $t \geq s$,
\begin{itemize}
\item if $(x,s) \not \rsa \Z^d \times \{t\}$, let $\eta^{(x,s)}_t = \triangle$;
\item if $(x,s) \rsa \Z^d \times \{t\}$, let $\eta^{(x,s)}_t = \gamma(t)$, where $\gamma$ is the unique RFBIP $\gamma: [s,t] \to \Z^d$ with $\gamma(s) = x$ (see \eqref{eq:dual_unique_prop}). 
\end{itemize}
In case $(x,s) = (0,0)$, we write $\eta_t$ instead of $\eta^{(0,0)}_t$.
\end{definition}
This process has been introduced in \cite{neuhauser} and further studied in \cite{valesin}.
 We emphasize that $(\eta^{(x,s)}_t)_{t \geq 0}$ only depends on $\mathbb{H} = (H,\mathcal{H})$ through $H$. Also note that for each $t \geq s$, $\eta^{(x,s)}_t$ only depends on $H_{[s,t]}$.  A useful consequence of \eqref{eq:dual_conc_prop} is:
\begin{equation}
0\leq s<t<u,\; x,y,z\in\Z^d,\; \eta^{(x,s)}_t = y,\; \eta^{(y,t)}_u = z \quad \Longrightarrow \quad \eta^{(x,s)}_u = z.
\end{equation}

Let us clarify one potential source of confusion in the definition of the ancestor process. If $\eta^{(x,s)}_t = y \neq \triangle$, then by definition there is an RFBIP $\gamma: [s,t] \to \Z^d$ with $\gamma(s) = x$ and $\gamma(t) = y$, but $\gamma$ does not necessarily coincide  with the path $[s,t] \ni r \mapsto \eta^{(x,s)}_r$. In fact, $r \mapsto \eta^{(x,s)}_r$ needs not even be a basic infection path. See Figure \ref{fig:illustrate} for an example of the ancestor process which illustrates this distinction.

\begin{figure}[htb]
\begin{center}
\setlength\fboxsep{0pt}
\setlength\fboxrule{0.5pt}
\fbox{\includegraphics[width = 0.8\textwidth]{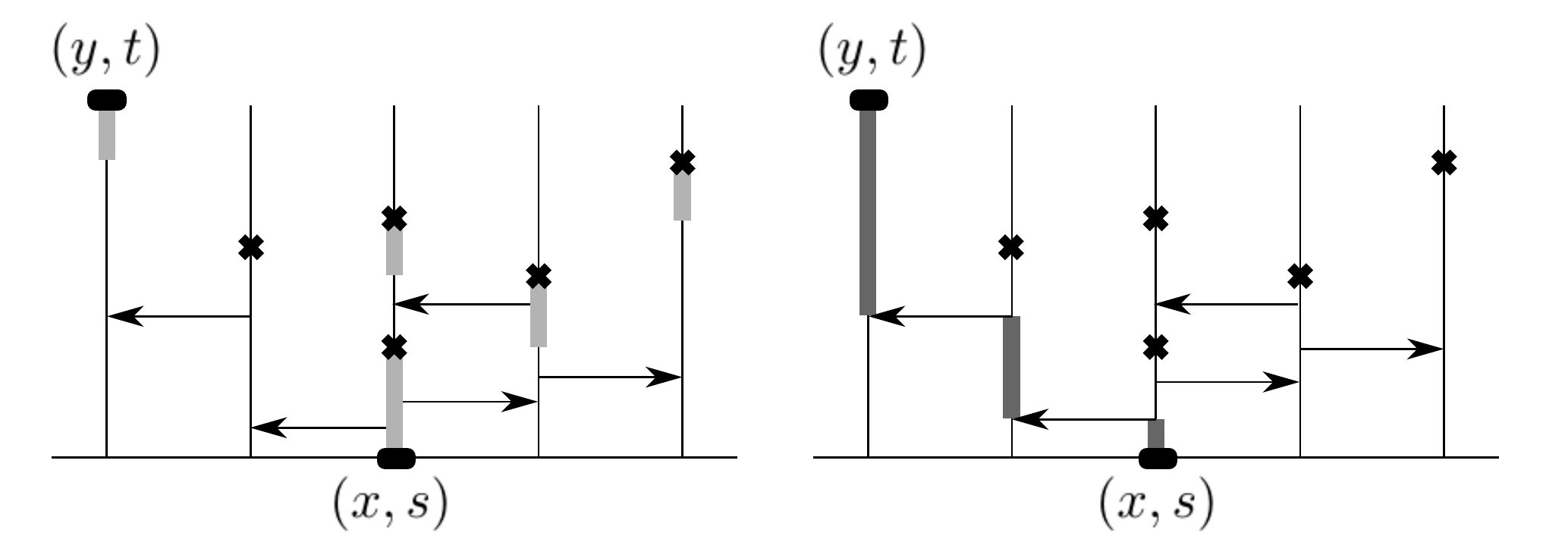}}
\end{center}
\caption{Left: the thick gray trajectory represents the values of $\eta^{(x,s)}_r$ for each $r \in [s,t]$ (in order to identify this process, one can use the procedure mentioned after \eqref{eq:dual_unique_prop}). Right: the  thick (darker) gray trajectory shows the unique RFBIP from $(x,s)$ to $\Z \times \{t\}$.}
\label{fig:illustrate}
\end{figure}

\subsection{Introducing a drift: proof of Proposition \ref{prop:steer}}
Our proof of Proposition \ref{prop:steer} consists of two ``ingredients'', each involving the definition of a random space-time point and the discussion of some of its properties. These ingredients are then combined to define the space-time point $(\mathcal{X},\mathcal{T})$ of Proposition \ref{prop:steer}.
\subsubsection{Ingredient 1: Bifurcation times of the ancestor process}
We start our steering construction defining random times at which the ancestor process of $(0,0)$, $(\eta_t)_{t\geq 0}$, satisfies a list of conditions. The construction depends on a constant $L \in \mathbb{N}$, $L > 1$, which we will choose later.

In what follows, the word `arrow' does \textit{not} refer to selective arrows. 
Let $t \geq 3$ and assume that $(0,0) \rsa \Z^d \times \{t\}$, so that $\eta_t \neq \triangle$.  Let $y = \eta_{t-3}$. We say that $t$ is a \textit{bifurcation time} of the ancestor process $\eta$ if we can find:
\begin{itemize}
\item no death mark in $\{y-e_1,y,y+e_1\} \times [t-3,t-1]$;
\item a death mark in $\{y\} \times [t-1,t]$;
\item exactly two arrows started from $\{y\} \times [t-3,t-2]$; one from $y$ to $y + e_1$ and the other from $y$ to $y - e_1$;
\item no arrow started from $\{y\} \times [t-2,t]$;
\item no arrow started from $\{y-e_1,y+e_1\} \times [t-3,t-1]$;
\item exactly one basic infection path from $(y-e_1,t-1)$ to $\Z^d \times \{t\}$, ending at $y-Le_1$;
\item exactly one basic infection path from $(y+e_1,t-1)$ to $\Z^d \times \{t\}$, ending at $y+Le_1$.
\end{itemize}
Note that  bifurcation times depend on $H$ and not on $\mathcal{H}$. See Figure \ref{fig:bifurcation} for an illustration of a bifurcation time.

\begin{figure}[htb]
\begin{center}
\setlength\fboxsep{0pt}
\setlength\fboxrule{0.5pt}
\fbox{\includegraphics[width = 0.7\textwidth]{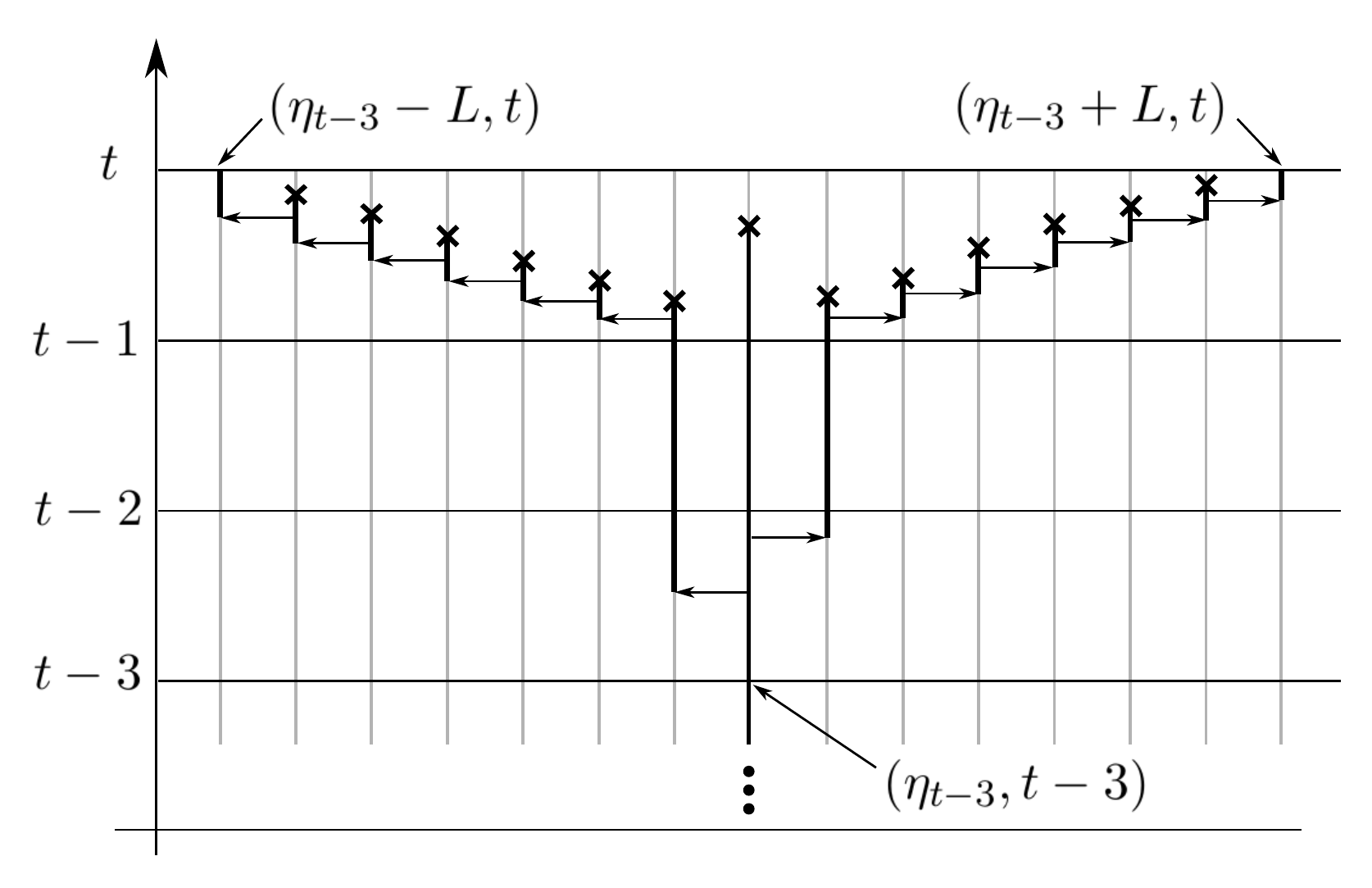}}
\end{center}
\caption{Illustration of a bifurcation time $t$ of the ancestor process $\eta$ (in the $d = 1$ case).}
\label{fig:bifurcation}
\end{figure}

We now want to find a bifurcation time $t$ around a spatial location $y$ with the property that $(y-Le_1,t) \rsa \infty$ or $(y+Le_1,t) \rsa \infty$ (or both). Let us first give an heuristic explanation to our approach to find such a point. We start following the ancestor process $\eta$ until  a bifurcation time $u$ is found (the bifurcation occurs in the time interval $[u-3,u]$, around a spatial location $y = \eta_{u-3}$). We then ask if at least one of $(y-Le_1,u) \rsa \infty$ and $(y+Le_1,u) \rsa \infty$ holds. If the answer is affirmative, we are done with our search. Otherwise, we wait until the first time $v$ such that $\{(y-Le_1,u),(y+Le_1,u)\} \not \rsa \Z^d \times \{v\}$; then we look for a new bifurcation time $u'$ after $v$, and repeat the procedure. 

We now give the rigorous description of this procedure. Let $U_1$ be the smallest  bifurcation time in $H$ (with the convention that $U_1 = \infty$ if there is no bifurcation time). Note that $U_1$ is a stopping time with respect to the natural filtration of the Poisson processes in $H$. In the event $\{U_1 < \infty\}$, we let $Y_1 = \eta_{U_1 - 3}$, that is, $Y_1$ is the spatial position around which the bifurcation at $U_1$ occurs. Then define another stopping time $V_1$ as: 
$$V_1 = \begin{cases}\sup\{t: \{(Y_1 - Le_1, U_1), (Y_1 + Le_1, U_1)\} \rsa \Z^d \times \{t\}\}&\text{if } U_1 < \infty;\\\infty&\text{otherwise}. \end{cases}$$
In words, in case $U_1 < \infty$, $V_1$ is the supremum of all times $t$ that can be reached by BIP's $\gamma:[U_1, t] \to \Z^d$ with $\gamma(U_1) \in \{Y_1 - Le_1, Y_1 + Le_1\}$.
Next, if $V_1 < \infty$, $\eta_{V_1} \neq \triangle$ and the ancestor process has at least one bifurcation time $t \geq V_1 + 3$, we let $U_2$ be the smallest bifurcation time larger than $V_1 + 3$, and let $Y_2 = \eta_{U_2-3}$. In all other cases (that is, (a) if $V_1 = \infty$, (b) if $V_1 < \infty$ but $\eta_{V_1} = \triangle$, or (c) if the ancestor process has no bifurcation time after $V_1 + 3$), we let $U_2 = \infty$ and $Y_2 = \triangle$. Then, $V_2$ is defined exactly as $V_1$, with $U_1, Y_1$ replaced by $U_2, Y_2$. We then proceed similarly for other values of $k$ to obtain a sequence $(U_k, V_k)$ with $U_k \leq V_k \leq U_{k+1}$ for each $k$.

In case there exists ${k_\star}$ for which $U_{k_\star} < \infty$ (so that there is a bifurcation connecting $(Y_{k_\star}, U_{k_\star}-3)$ to the two points $(Y_{k_\star}-Le_1,U_{k_\star})$ and $(Y_{k_\star} + Le_1, U_{k_\star})$) and $V_{k_\star} = \infty$ (so that $(Y_{k_\star}-Le_1,U_{k_\star})\rsa \infty$ or $(Y_{k_\star} + Le_1, U_{k_\star})\rsa \infty$, or both), we let $U_\star = U_{k_\star}$ and $Y_\star = Y_{k_\star}$. Otherwise, we let $U_\star = \infty$ and $Y_\star = \triangle$.

We now state two lemmas that will be needed about these random times; the proofs are postponed to Section \ref{s:proofs_5} in the Appendix.

\begin{lemma}
\label{lem:properties_bifurcation}
Conditioned on $\{(0,0)\rightsquigarrow \infty\}$, $U_\star$ is almost surely finite:
\begin{equation}\label{eq:U_star_finite}
\P[U_\star < \infty \mid (0,0) \rsa \infty] = \sum_{k=0}^\infty \P[U_k < \infty,\; V_k = \infty \mid (0,0) \rsa \infty] = 1.
\end{equation}
Moreover, there exists $\sigma_1 > 0$ (depending on $L$) such that
\begin{equation}\label{eq:tail_U_star}
{\mathbb{E}}\left[\exp(\sigma_1\cdot  U_\star) \mid (0,0) \rsa \infty\right] < \infty.
\end{equation} 
\end{lemma}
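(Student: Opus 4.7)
The plan is to establish the lemma in two main stages. First, I would show that the waiting time $U_1$ for the first bifurcation has an exponential tail under $\P[\,\cdot\mid(0,0)\rsa\infty]$. The key input is that the event ``$3n+3$ is a bifurcation time'' depends only on the Harris system $H$ in a bounded space-time box around $(\eta_{3n},3n)$, and hence, conditional on $\mathcal{F}_{3n}$ and on $\eta_{3n}\ne\triangle$, has probability at least some $q=q(L,\lambda_2)>0$, uniformly in $\eta_{3n}$ by translation invariance of $H$. Since the local events in disjoint blocks $[3n,3n+3]$ use disjoint time-slices of $H$ and are therefore conditionally independent,
$$\P\!\left[U_1>3N,\;\eta_{3n}\ne\triangle\text{ for all }n\le N\right]\;\le\;(1-q)^N.$$
Because $(0,0)\rsa\infty$ implies $\eta_t\ne\triangle$ for every $t$, dividing by $\P[(0,0)\rsa\infty]>0$ yields an exponential tail for $U_1$ under the conditional measure. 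The very same argument, applied after the stopping time $V_k$ on $\{V_k<\infty\}$, shows that the waiting time $U_{k+1}-V_k$ has an exponential tail uniformly in $k$.

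Second, I would use the strong Markov property at each stopping time $U_k$ to control both the success probability of the $k$-th attempt and the duration of a failed attempt. On $\{U_k<\infty\}$, the event $\{V_k=\infty\}$ contains $\{(Y_k+Le_1,U_k)\rsa\infty\}$, whose conditional probability given $\mathcal{F}_{U_k}$ equals $\rho=\P[(0,0)\rsa\infty]>0$; hence $\P[V_k<\infty\mid\mathcal{F}_{U_k}]\le 1-\rho$ on $\{U_k<\infty\}$. On $\{V_k<\infty\}$, the variable $V_k-U_k$ is the extinction time of a (one-type, rate $\lambda_2$) contact process started from the two-point set $\{Y_k\pm Le_1\}$, so by \eqref{eq:dies_quickly} we have $\P[V_k-U_k>t,\,V_k<\infty\mid\mathcal{F}_{U_k}]\le e^{-\bar c\,t}$. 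Writing $U_\star=U_{k_\star}=U_1+\sum_{j=1}^{k_\star-1}\bigl[(V_j-U_j)+(U_{j+1}-V_j)\bigr]$ and noting that $k_\star$ is stochastically dominated by a Geometric($\rho$) variable while each summand has uniform exponential tails given the preceding filtration, a standard moment-generating-function computation for a geometric sum of exponentially integrable increments yields \eqref{eq:tail_U_star}, and \eqref{eq:U_star_finite} follows as an immediate byproduct.

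The hard part will be ensuring that these tail bounds and success probabilities are truly uniform in $k$, despite the $k$-dependent (random) location $Y_k$ and the delicate conditioning on the global survival event $\{(0,0)\rsa\infty\}$. My strategy is to work first with the \emph{unconditional} probabilities, leveraging translation invariance and the strong Markov property of the augmented Harris system at the stopping times $U_k$ and $V_k$, and only at the very end to divide by $\P[(0,0)\rsa\infty]>0$; this sidesteps the usual difficulty of dealing with conditional probabilities given an event in the distant future.
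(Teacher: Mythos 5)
Your proposal is correct and follows essentially the same route as the paper: an iteration over time blocks of length $3$ giving a uniform positive probability of a bifurcation (hence exponential tails for $U_1$ and $U_{k+1}-V_k$ on the event that the ancestor is alive), the strong Markov property at $U_k$ to show each attempt succeeds with probability at least $\P[(0,0)\rsa\infty]$, the bound \eqref{eq:dies_quickly} for the duration $V_k-U_k$ of a failed attempt, and a geometric-sum/moment-generating-function computation, all done unconditionally and divided by $\P[(0,0)\rsa\infty]$ at the end. The only cosmetic difference is that the paper packages the final step as $\E[e^{\sigma U_k}\mathds{1}_{\{U_k<\infty\}}]\le (C_\sigma g(\sigma))^k$ with $C_\sigma g(\sigma)<1$ for small $\sigma$, which is exactly your ``geometric sum of exponentially integrable increments.''
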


\begin{lemma}
\label{lem:fit_U_star}
Given events $E_1,E_2$ on Harris systems,
\begin{equation}\label{eq:distr_ingr_1}\begin{split}
&{\mathbb{P}}\left[\mathbb{H}_{[0,U_\star]} \in E_1,\;  \mathbb{H} \circ \theta(Y_\star,U_\star) \in E_2 \mid (0,0) \rsa \infty\right]\\& ={\mathbb{P}}\left[\mathbb{H}_{[0,U_\star]} \in E_1 \mid (0,0) \rsa \infty\right]\cdot {\mathbb{P}}\left[ \mathbb{H}\in E_2 \mid \{(-L,0)\rightsquigarrow \infty\} \cup \{(L,0)\rightsquigarrow \infty\} \right].
\end{split}
\end{equation}
\end{lemma}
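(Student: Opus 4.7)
The plan is to decompose $\{U_\star < \infty\}$ as the disjoint union $\bigsqcup_{k \geq 1}\{U_k < \infty,\, V_k = \infty\}$, apply the strong Markov property of the augmented Harris system at the stopping time $U_k$ for each $k$ separately, and then sum. By Lemma~\ref{lem:properties_bifurcation} this disjoint union coincides, up to a null event, with $\{(0,0) \rsa \infty\}$; and on $\{U_k < \infty,\, V_k = \infty\}$ one has $(U_\star, Y_\star) = (U_k, Y_k)$, so it is enough to work index by index.

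Preliminarily, I would check that each $U_k$ and each $V_k$ is a stopping time (in the generalised sense, allowing the value $+\infty$) for the natural filtration $(\mathcal{F}_t)_{t \geq 0}$ generated by $\mathbb{H}$. The bifurcation condition inside a window $[t-3,t]$ is $\mathcal{F}_t$-measurable, so $U_1$ is a stopping time; likewise, whether the two prescribed basic infection paths started at $(Y_k \pm Le_1, U_k)$ have both died out by time $t$ is $\mathcal{F}_t$-measurable, so each $V_k$ is a stopping time, and the alternating recursion carries the property to every $U_k, V_k$. On $\{U_k < \infty\}$ the location $Y_k = \eta_{U_k-3}$ and the restriction $\mathbb{H}_{[0,U_k]}$ are then $\mathcal{F}_{U_k}$-measurable. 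Writing
\begin{equation*}
\tilde{A} = \{\mathbb{H} : (-Le_1, 0) \rsa \infty\} \cup \{\mathbb{H} : (Le_1, 0) \rsa \infty\},
\end{equation*}
the key identification I would use is that, on $\{U_k < \infty\}$, the event $\{V_k = \infty\}$ is equivalent to $\{\mathbb{H} \circ \theta(Y_k, U_k) \in \tilde{A}\}$: both simply say that at least one of the space-time points $(Y_k \pm Le_1, U_k)$ is the base of a basic infection path reaching arbitrarily large times.

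With these observations in hand, the strong Markov property at $U_k$ states that, conditionally on $\mathcal{F}_{U_k}$ on the event $\{U_k < \infty\}$, the shifted system $\mathbb{H} \circ \theta(Y_k, U_k)$ has the unconditional law of $\mathbb{H}$ and is independent of $\mathcal{F}_{U_k}$; this rests on the independence and memoryless property of the Poisson processes composing $\mathbb{H}$ together with the translation invariance of their joint law, and legitimises the spatial recentering by the $\mathcal{F}_{U_k}$-measurable $Y_k$. It follows that, for any events $E_1, E_2$,
\begin{equation*}
\mathbb{P}\bigl[\mathbb{H}_{[0,U_k]} \in E_1,\; U_k < \infty,\; V_k = \infty,\; \mathbb{H} \circ \theta(Y_k, U_k) \in E_2\bigr] = \mathbb{P}\bigl[\mathbb{H}_{[0,U_k]} \in E_1,\; U_k < \infty\bigr] \cdot \mathbb{P}[\mathbb{H} \in E_2 \cap \tilde{A}].
\end{equation*}
Specialising $E_2 = \Omega$ yields $\mathbb{P}[\mathbb{H}_{[0,U_k]} \in E_1,\, U_k < \infty,\, V_k = \infty] = \mathbb{P}[\mathbb{H}_{[0,U_k]} \in E_1,\, U_k < \infty] \cdot \mathbb{P}[\tilde{A}]$, and feeding this back into the display recasts the right-hand side as $\mathbb{P}[\mathbb{H}_{[0,U_k]} \in E_1,\, U_k < \infty,\, V_k = \infty] \cdot \mathbb{P}[\mathbb{H} \in E_2 \mid \tilde{A}]$. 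Summing over $k \geq 1$ using the disjoint decomposition, then dividing through by $\mathbb{P}[(0,0) \rsa \infty]$, gives precisely \eqref{eq:distr_ingr_1}.

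The only delicate step is the strong Markov application with a random spatial recentering by the $\mathcal{F}_{U_k}$-measurable location $Y_k$; the rest is bookkeeping of the disjoint decomposition and the reformulation of $\{V_k = \infty\}$ as an event on the shifted system.
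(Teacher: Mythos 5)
Your proof is correct and follows essentially the same route as the paper: decompose over the index $k$ with $U_k<\infty$, $V_k=\infty$, identify $\{V_k=\infty\}$ with the event that the shifted system lies in $\{(-Le_1,0)\rsa\infty\}\cup\{(Le_1,0)\rsa\infty\}$, and factor via the strong Markov/renewal property at $U_k$. The only cosmetic difference is that the paper additionally sums over the possible values $y$ of $Y_k$ so that the spatial shift becomes deterministic in each term, whereas you invoke the strong Markov property with the $\mathcal{F}_{U_k}$-measurable recentering $Y_k$ directly; these are equivalent.
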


\subsubsection{Ingredient 2: Survival time for one ancestry out of a pair}
The second ingredient in our construction is another random space-time point $(Z_\star, W_\star)$ obtained as a function of the augmented Harris system $\mathbb{H}$ (again, it will depend on $H$ and not $\mathcal{H}$). The definition of this space-time point will refer to the same constant $L$ that was used in the previous subsection. For now we only assume $L \in \N$ and $L > 1$, but Lemma \ref{lem:choice_of_L} will require $L$ to be chosen large.

We assume a Harris system $H$ is given and define 
$$W_0 \equiv 0,\qquad Z_0 \equiv 0,\qquad W_1 = \sup\{t:(Le_1,0) \rsa \Z^d \times \{t\}\}.$$
 We then let 
$$Z_1 = \begin{cases} \eta^{(-Le_1,0)}_{W_1}&\text{if } W_1 < \infty;\\[.2cm]\triangle&\text{otherwise,}\end{cases}$$
and inductively, for $k \geq 1$,
\begin{align*} &W_{k+1}  = \begin{cases} \sup\{t: (Z_k,W_k) \rsa \Z^d \times \{t\}\}&\text{if } W_k < \infty,\;Z_k \neq \triangle;\\[.2cm] \infty&\text{otherwise,}\end{cases} \\[.2cm] &Z_{k+1} = \begin{cases}  \eta^{(-L e_1,0)}_{W_{k+1}}&\text{if } W_{k+1} < \infty,\\[.2cm]\triangle&\text{otherwise.}\end{cases}\end{align*}
Now, if $(Le_1,0) \rsa \infty$, so that $W_1 = \infty$, we define $(Z_\star, W_\star) = (Le_1,0)$. If, on the other hand, there exists a (necessarily unique) $k_\star \geq 1$ such that $W_{k_\star}< \infty$, $Z_{k_\star} \neq \triangle$ and $W_{{k_\star}+1} = \infty$ (so that $(Z_{k_\star},W_{k_\star}) \rsa \infty$), we let $(Z_\star, W_\star) = (Z_{k_\star},W_{k_\star})$. In all other cases, we simply put $(Z_\star, W_\star) = (\triangle, \infty)$. These definitions are illustrated in Figure \ref{fig:pair}.

\begin{figure}[htb]
\begin{center}
\setlength\fboxsep{0pt}
\setlength\fboxrule{0.5pt}
\fbox{\includegraphics[width = 0.5\textwidth]{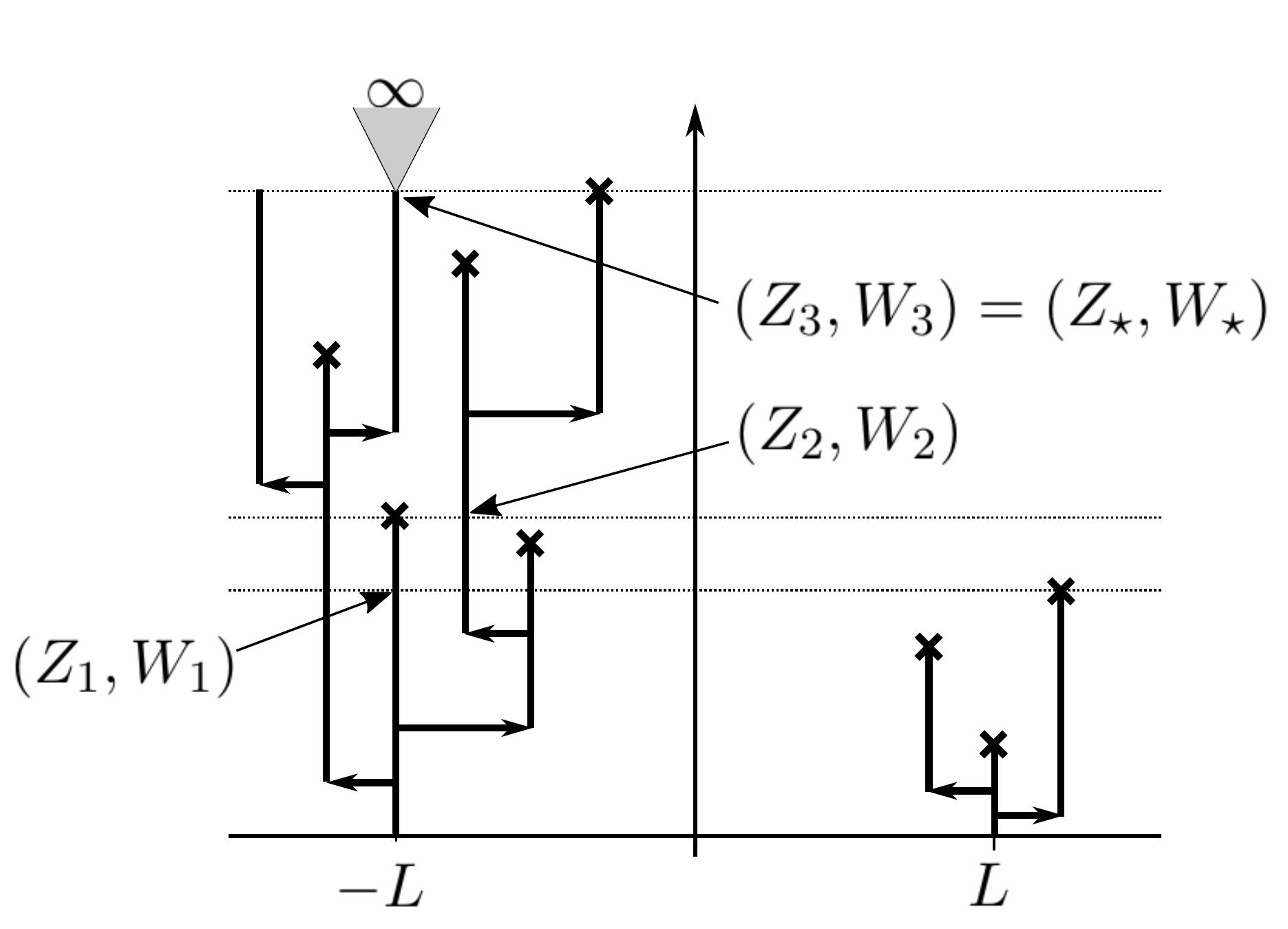}}
\end{center}
\caption{The space-time points $(Z_k, W_k)$ and $(Z_\star, W_\star)$ (dimension one). The grey triangle with the infinity symbol indicates that $(Z_3, W_3) \rsa \infty$.}
\label{fig:pair}
\end{figure}

\begin{lemma} Conditioned on $\{(Le_1,0) \rsa \infty\} \cup \{(-Le_1,0)\rsa \infty\}$, $W_\star$ is almost surely finite:
\begin{equation}\begin{split}
&\P\left[W_\star < \infty \mid \{(Le_1,0) \rsa \infty\} \cup \{(-Le_1,0)\rsa \infty\} \right] \\[.2cm]&\hspace{2cm}= \P\left[W_\star = 0 \mid \{(Le_1,0) \rsa \infty\} \cup \{(-Le_1,0)\rsa \infty\}\right]\\& \hspace{2.5cm}+ \sum_{k=1}^\infty \P\left[W_\star = W_k < \infty \mid \{(Le_1,0) \rsa \infty\} \cup \{(-Le_1,0)\rsa \infty\}\right] = 1;\end{split}
\end{equation}
moreover, there exists a constant $\sigma_2 > 0$, independent of $L$, such that
\begin{equation}\label{eq:tail_hat}
{\mathbb{E}}\left[\exp(\sigma_2 \cdot W_\star) \mid \{(Le_1,0) \rsa \infty\} \cup \{(-Le_1,0)\rsa \infty\}\right] < \infty.
\end{equation}
\end{lemma}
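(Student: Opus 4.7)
The plan is to condition the augmented Harris system on the conditioning event $\{(Le_1,0)\rsa\infty\}\cup\{(-Le_1,0)\rsa\infty\}$ and split into Case 1 (where $(Le_1,0)\rsa\infty$, on which $W_\star=0$ trivially) and Case 2 (where $(Le_1,0)\not\rsa\infty$ but $(-Le_1,0)\rsa\infty$). I focus on Case 2. Crucially, on this event the ancestor path $\eta^{(-Le_1,0)}_t$ is defined for every $t\geq 0$, so $Z_k\neq\triangle$ whenever $W_k<\infty$; hence the iterative procedure terminates precisely at the first $k\geq 1$ with $W_{k+1}=\infty$, and there is no loss coming from the $Z_k=\triangle$ escape route.

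Next I would verify that each $W_k$ is a stopping time for $\cF_t=\sigma(\mathbb{H}_{[0,t]})$ (since $\{W_k\leq t\}$ can be read off from arrows and death marks in $[0,t]$) and invoke the strong Markov property together with translation invariance of $\mathbb{H}$: conditional on $\cF_{W_k}$ on $\{W_k<\infty,\,Z_k\neq\triangle\}$, the shifted system $\mathbb{H}\circ\theta(Z_k,W_k)$ is independent of $\cF_{W_k}$ and distributed as $\mathbb{H}$. Setting $p=\P[(0,0)\rsa\infty]>0$, this yields
\[
\P[W_{k+1}=\infty\mid \cF_{W_k},\,W_k<\infty,\,Z_k\neq\triangle]=p,
\]
so $\P[W_k<\infty\ \forall k]\leq(1-p)^k\to 0$. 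Dividing by $\P[\text{c.e.}]\geq p$ gives almost sure finiteness of $W_\star$ on the conditioning event, with the decomposition claimed in the lemma coming from the identity $\{W_\star<\infty\}=\{W_\star=0\}\cup\bigcup_{k\geq 1}\{W_\star=W_k<\infty\}$.

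For the exponential moment, set $T=\sup\{t:(0,0)\rsa\Z^d\times\{t\}\}$ and $q(\sigma):=\mathbb{E}[\exp(\sigma T);\,T<\infty]$. The bound \eqref{eq:dies_quickly} gives $\P[t<T<\infty]\leq e^{-\bar{c}t}$ with $\bar{c}$ depending only on $d,\lambda_2,R$; integration by parts then shows $q$ is continuous on $[0,\bar{c})$ with $q(0)=\P[T<\infty]=1-p<1$, so $q(\sigma)<1$ for all sufficiently small $\sigma>0$. A further strong Markov application at $W_k$ gives
\[
\mathbb{E}\!\left[\exp(\sigma(W_{k+1}-W_k))\one_{W_{k+1}<\infty}\bigm|\cF_{W_k},\,W_k<\infty,\,Z_k\neq\triangle\right]=q(\sigma),
\]
and by translation invariance $\mathbb{E}[\exp(\sigma W_1)\one_{W_1<\infty}]=q(\sigma)$ as well. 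Iterating via the tower property yields $\mathbb{E}[\exp(\sigma W_k)\one_{W_1,\ldots,W_k<\infty}]\leq q(\sigma)^k$, whence
\[
\mathbb{E}\!\left[\exp(\sigma W_\star)\right]\leq \P[\text{Case 1}]+p\sum_{k\geq 1}q(\sigma)^k<\infty,
\]
and dividing by $\P[\text{c.e.}]\geq p$ yields the desired conditional moment. The key point for the statement is that $\bar{c}$---and hence the permissible $\sigma_2$---depends only on the parameters of the basic one-type contact process at rate $\lambda_2$, and in particular is independent of $L$.

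The main obstacle will be the careful handling of the strong Markov property at the random times $W_k$: one must check that $W_k$ is indeed a stopping time, that $(Z_k,W_k)$ is $\cF_{W_k}$-measurable, and that the spatial shift by $Z_k$ is innocuous by translation invariance. A secondary technical point is that the conditioning event $\{(-Le_1,0)\rsa\infty\}$ is not $\cF_{W_k}$-measurable, which I circumvent by establishing both a.s.\ finiteness and the exponential moment bound unconditionally and only then dividing by $\P[\text{c.e.}]\geq p$.
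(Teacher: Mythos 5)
Your proof is correct and is essentially the argument the paper intends: the paper omits the proof of this lemma, saying it is a simpler version of the proof of Lemma \ref{lem:properties_bifurcation}, and your argument --- geometric decay of $\P[W_k<\infty]$ via the strong Markov property at the stopping times $W_k$, together with the factorization $\E[e^{\sigma W_k}\mathds{1}_{\{W_k<\infty\}}]\le q(\sigma)^k$ where $q(\sigma)=\E[e^{\sigma T^{\{0\}}}\mathds{1}_{\{T^{\{0\}}<\infty\}}]<1$ for $\sigma$ small --- is precisely that simpler analogue. Your remark that $q$ depends only on the rate-$\lambda_2$ contact process (and not on $L$) is exactly what justifies the claimed uniformity of $\sigma_2$ in $L$.
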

The proof of this result is similar to that of Lemma \ref{eq:U_star_finite}, only simpler, so we will omit it.

\begin{lemma}\label{lem:law_ing2}
Given events $E_1$ and $E_2$ on Harris systems,
\begin{equation*}\begin{split}
&\P\left[\mathbb{H}_{[0,W_\star]} \in E_1,\; \mathbb{H}\circ \theta(Z_\star, W_\star) \in E_2 \mid  \{(Le_1,0) \rsa \infty\} \cup \{(-Le_1,0)\rsa \infty\}\right] \\[.2cm] &= \P\left[\mathbb{H}_{[0,W_\star]} \in E_1 \mid  \{(Le_1,0) \rsa \infty\} \cup \{(-Le_1,0)\rsa \infty\}\right]\\[.2cm] &\hspace{4.5cm} \cdot \P\left[\mathbb{H} \in E_2 \mid  \{(Le_1,0) \rsa \infty\} \cup \{(-Le_1,0)\rsa \infty\}\right].\end{split}
\end{equation*}
\end{lemma}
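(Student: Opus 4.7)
The plan is to partition the conditioning event $\mathcal{E} = \{(Le_1,0) \rsa \infty\} \cup \{(-Le_1,0) \rsa \infty\}$ according to the random index $k_\star \geq 0$ realizing $W_\star = W_{k_\star}$ (where $k_\star = 0$ corresponds to $W_\star = 0$, triggered by $\{(Le_1,0) \rsa \infty\}$, and $k_\star \geq 1$ corresponds to $(Z_{k_\star},W_{k_\star}) \rsa \infty$), and then to apply the strong Markov property of $\mathbb{H}$ at the random time $W_{k_\star}$. The only substantive step is the strong Markov argument; the rest is a renewal-style bookkeeping/summation over $k \geq 0$.

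The preparatory step is to verify that $W_k$ is a stopping time of the natural filtration $\mathcal{F}_t = \sigma(\mathbb{H}_{[0,t]})$ for each $k$. The key observation is that, for any space-time point $(x,s)$, the extinction time $T^{(x,s)} = \sup\{t : (x,s) \rsa \Z^d \times \{t\}\}$ is a classical stopping time, since $\{T^{(x,s)} \leq t\}$ is the event that no BIP from $(x,s)$ reaches $\Z^d \times \{t\}$, which is $\mathbb{H}_{[s,t]}$-measurable. An induction on $k$, using that $(Z_{k-1}, W_{k-1}) \in \mathcal{F}_{W_{k-1}}$, then yields the claim. Consequently the event $A_k := \{W_j < \infty,\; Z_j \neq \triangle \text{ for all } j \leq k\}$ lies in $\mathcal{F}_{W_k}$, and the remaining constraint in $\{W_\star = W_k\}$, namely $(Z_k,W_k) \rsa \infty$, is precisely the event $\{(0,0) \rsa \infty\}$ in the shifted system $\mathbb{H}\circ\theta(Z_k,W_k)$.

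Now I invoke the strong Markov property at $W_k$ jointly with translation invariance: conditionally on $\mathcal{F}_{W_k}$, on $\{W_k < \infty\}$, the shifted system $\mathbb{H}\circ\theta(Z_k,W_k)$ has law $\mathbb{P}$ and is independent of $\mathcal{F}_{W_k}$. Applied to the indicator of $A_k \cap \{\mathbb{H}_{[0,W_k]} \in E_1\}$ (which is $\mathcal{F}_{W_k}$-measurable) paired with the indicator of $\{\mathbb{H}\circ\theta(Z_k,W_k) \in E_2\cap\{(0,0)\rsa\infty\}\}$ (which lives purely in the shifted system), this gives, for every $k \geq 1$,
\begin{equation*}
\mathbb{P}\bigl[\mathbb{H}_{[0,W_k]}\in E_1,\ W_\star = W_k,\ \mathbb{H}\circ\theta(Z_k,W_k)\in E_2\bigr] = \mathbb{P}\bigl[\mathbb{H}_{[0,W_k]}\in E_1,\ A_k\bigr]\cdot \mathbb{P}\bigl[\mathbb{H}\in E_2,\ (0,0)\rsa\infty\bigr],
\end{equation*}
with a parallel identity for $k = 0$ obtained by direct translation invariance (shifting by $(Le_1,0)$ rather than by $(Z_0,W_0) = (0,0)$). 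Summing over $k \geq 0$ and using that $\mathcal{E} = \bigsqcup_{k\geq 0}\{W_\star = W_k\}$ up to a null set, the $k$-independent right-hand factor pulls out of the sum, and the remaining sum equals $\mathbb{P}[\mathbb{H}_{[0,W_\star]}\in E_1,\ \mathcal{E}]$; the same computation with $E_1$ and $E_2$ trivialized evaluates the marginals, and the factorization in the statement follows upon dividing through by $\mathbb{P}[\mathcal{E}]$.

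The main obstacle is the stopping-time character of $W_k$: at first sight $W_k$ resembles a ``last exit'' time, for which the strong Markov property is not available in general. The observation that extinction of a finitely-originated BIP infection is already visible from $\mathbb{H}_{[s,t]}$ (as opposed to requiring $\mathbb{H}_{[s,\infty)}$) is what rescues the argument and turns $W_k$ into an honest stopping time. Once this is in place, the remainder is routine, with only mild extra care needed for the $k = 0$ case, where the spatial shift point $(Le_1,0)$ does not coincide with $(Z_0,W_0) = (0,0)$.
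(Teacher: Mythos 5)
Your proposal is correct and follows essentially the same route as the paper: decompose over the index $k$ realizing $W_\star=W_k$ (with the $k=0$ case handled separately via the shift by $(Le_1,0)$), use that $\{W_k<\infty,\,Z_k=z,\,\mathbb{H}_{[0,W_k]}\in E_1\}$ is determined by $\mathbb{H}_{[0,W_k]}$ while $\{(Z_k,W_k)\rsa\infty\}\cap\{\mathbb{H}\circ\theta(Z_k,W_k)\in E_2\}$ lives in the shifted system, factor, and resum. The only cosmetic difference is that the paper additionally sums over the values $z$ of $Z_k$ so as to apply independence with a deterministic spatial shift, whereas you invoke the strong Markov property at the stopping time $W_k$ with the $\scF_{W_k}$-measurable random shift $Z_k$ directly — these are the same argument.
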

Lemma \ref{lem:law_ing2} is proved in Section \ref{s:proofs_5} in the Appendix.

\begin{lemma}
\label{lem:choice_of_L} If $L$ is large enough, $${\E}[Z_\star \mid \{(Le_1,0) \rsa \infty\} \cup \{(-Le_1,0) \rsa \infty\}] = \alpha_L \cdot e_1,$$ where $\alpha_L > 0$.
\end{lemma}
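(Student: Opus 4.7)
My approach is to show that $\E[Z_\star \mid A \cup B]$ is a positive multiple of $e_1$ when $L$ is large, where I set $A := \{(Le_1,0)\rsa\infty\}$, $B := \{(-Le_1,0)\rsa\infty\}$, and $\rho := \P((0,0)\rsa\infty) > 0$ (positive because of the global assumption $\lambda_2 > \lambda_c$). First I kill the transverse coordinates by reflection symmetry, and then I extract positivity of the $e_1$-component by decomposing on $A$ vs.\ $A^c\cap B$ and using translation invariance combined with FKG.

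For each $i \in \{2,\ldots,d\}$, the spatial reflection $\phi_i: x\mapsto x - 2(x\cdot e_i)e_i$ is an isometry fixing both $\pm Le_1$, and the induced action on $\mathbb{H}$ preserves its distribution. Because the two reference points $\pm Le_1$ are $\phi_i$-fixed and the ancestor process transforms covariantly under spatial symmetries, the construction of $(Z_k, W_k)$ is $\phi_i$-equivariant in its spatial coordinate (while each $W_k$ is $\phi_i$-invariant); consequently so is $(Z_\star, W_\star)$. Since the event $A\cup B$ is also $\phi_i$-invariant, $\phi_i(Z_\star)$ and $Z_\star$ have the same conditional distribution given $A\cup B$, which forces $\E[Z_\star\cdot e_i\mid A\cup B] = 0$ for $i\geq 2$. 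Thus $\E[Z_\star\mid A\cup B] = \alpha_L e_1$, and the lemma is equivalent to showing $\E[Z_\star\cdot e_1\mathds{1}_{A\cup B}] > 0$ (since $\P(A\cup B) > 0$).

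To handle the $e_1$-component I split $\E[Z_\star\cdot e_1\mathds{1}_{A\cup B}] = \E[Z_\star\cdot e_1\mathds{1}_A] + \E[Z_\star\cdot e_1\mathds{1}_{A^c\cap B}]$. On $A$ the construction sets $Z_\star = Le_1$, contributing $L\P(A)$. On $A^c\cap B$ one has $Z_\star = \eta^{(-Le_1,0)}_{W_\star}$, which is well-defined and not $\triangle$ because the ancestor line persists on $B$ and $W_\star < \infty$ a.s.\ on $A\cup B$ by \eqref{eq:tail_hat}. Writing $Z_\star\cdot e_1 = -L + (Z_\star + Le_1)\cdot e_1$ and using translation invariance $\P(A) = \P(B)$ to simplify $\P(A) - \P(A^c\cap B) = \P(A\cap B)$, I obtain
\begin{equation*}
\E[Z_\star\cdot e_1\mathds{1}_{A\cup B}] = L\,\P(A\cap B) + \E[(Z_\star + Le_1)\cdot e_1\mathds{1}_{A^c\cap B}].
\end{equation*}
By FKG (each of $A, B$ is increasing in arrows and decreasing in death marks of $H$), $\P(A\cap B) \geq \rho^2 > 0$.

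The final step is to bound the error $|\E[(Z_\star + Le_1)\cdot e_1\mathds{1}_{A^c\cap B}]|$ by a constant $C$ independent of $L$. This is essentially the expected $\ell^1$-displacement of the ancestor of $(-Le_1,0)$ up to the random time $W_\star$, which I would control by combining the light-cone estimate \eqref{eq:1cp_moves_slowly} applied at $(-Le_1,0)$ with the exponential tail of $W_\star$ provided by \eqref{eq:tail_hat}: splitting on $\{W_\star \leq t\}$ versus $\{W_\star > t\}$ and optimizing $t$ proportional to $r$ yields $\P[\|\eta^{(-Le_1,0)}_{W_\star} + Le_1\| > r,\,A^c\cap B] \leq C_1 e^{-\delta r}$ for some $C_1, \delta > 0$, and integration in $r$ gives a uniform constant. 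Putting everything together, $\E[Z_\star\cdot e_1\mathds{1}_{A\cup B}] \geq L\rho^2 - C$, positive for $L > C/\rho^2$, so $\alpha_L > 0$. The main obstacle is securing $L$-uniformity of the tail bound for $W_\star$: while \eqref{eq:tail_hat} asserts this only for the exponent $\sigma_2$, a careful reading of the omitted renewal-type proof should show the prefactor can also be taken $L$-independent, since each ``attempt'' succeeds with probability at least $\rho$ (uniform in $L$) and, by \eqref{eq:dies_quickly}, its failure contributes an $L$-independent exponentially integrable amount of time.
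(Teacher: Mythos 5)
Your proposal is correct and follows essentially the same route as the paper: the same decomposition of $\E[Z_\star\cdot e_1\mathds{1}_{A\cup B}]$ on $A$ versus $A^c\cap B$, translation invariance to produce $L\,\P(A\cap B)$, FKG to bound this below by $L\rho^2$, and an $L$-uniform bound on the correction term via the light-cone estimate \eqref{eq:1cp_moves_slowly} combined with the exponential tail \eqref{eq:tail_hat} of $W_\star$. The $L$-uniformity of the prefactor in the tail bound for $W_\star$, which you flag as the main remaining concern, is asserted (with the same justification you sketch) but not proved in detail in the paper either, so your argument is at the same level of rigor.
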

\begin{proof}
We abbreviate
$$A_- = \{(-Le_1,0) \rsa \infty\},\qquad A_+ = \{(Le_1,0) \rsa \infty\},\qquad A = A_- \cup A_+.$$
We first need to prove that
\begin{equation} \label{eq:Z_star_integrable}
\E\left[ \|Z_\star\| \cdot \mathds{1}_{A}\right] < \infty.
\end{equation} 
To this end, we write $A = A_+ \cup (A_+^c \cap A_-)$, so 
\begin{equation}\label{eq:Z_star_decomp}\begin{split} &Z_\star \cdot \mathds{1}_{A} = Le_1 \cdot \mathds{1}_{A_+} - Le_1 \cdot \mathds{1}_{A_+^c \cap A_-} + (Z_\star + Le_1)  \cdot \mathds{1}_{A^c_+ \cap A_-}.   \end{split}\end{equation}
Because of  this equality, \eqref{eq:Z_star_integrable} will follow from the statement:
\begin{equation} \label{eq:Z_star_need2}\exists C > 0: \; \forall L > 0,\;\;\E\left[ \|Z_\star + Le_1\| \cdot \mathds{1}_{A_+^c \cap A_-}\right] < C.\end{equation}
Let us prove \eqref{eq:Z_star_need2}. The expectation is equal to
$$\E\left[ \left\|\eta^{(-Le_1,0)}_{W_\star} + Le_1\right\| \cdot \mathds{1}_{A_+^c \cap A_-}\right].$$
Define
$$M_t = \sup\{\|z+ Le_1\|:\; z \in \Z^d,\; \exists s \leq t:\; (-Le_1,0) \rsa (z,s)\},\quad t \geq 0.$$
For any $x > 0$  and $\alpha \in (0,1)$ we have
\begin{equation}\label{eq:aux_tail00}\begin{split}
\P\left[ \left\|\eta^{(-Le_1,0)}_{W_\star} + Le_1\right\| \cdot \mathds{1}_{A_+^c \cap A_-} > x\right] \leq &\P\left[ \{W_\star > \alpha x\} \cap A_+^c \cap A_- \right]+ \P\left[M_{\alpha x} > x\right].
\end{split}\end{equation}
Using \eqref{eq:tail_hat} and the Chebyshev's inequality,
\begin{equation}\label{eq:aux_tail10}\begin{split}\P\left[ \{W_\star > \alpha x\} \cap A_+^c \cap A_- \right] &\leq \P\left[ \{W_\star > \alpha x\} \cap A_+^c \cap A_- \mid A\right] \\&\leq  \P\left[ W_\star > \alpha x\mid A\right]< C\exp(-\sigma_2 \alpha x), \end{split}\end{equation}
where $C$ is a constant that does not depend on $L$. Next, by \eqref{eq:1cp_moves_slowly},
\begin{equation}\label{eq:aux_tail11}
\P\left[M_{\alpha x} > x\right] \leq \exp(b_1 \alpha x - b_2 x).
\end{equation}
Using \eqref{eq:aux_tail10} and  \eqref{eq:aux_tail11} in \eqref{eq:aux_tail00} with $\alpha < b_2/b_1$ and integrating over $x$, we obtain \eqref{eq:Z_star_need2}, so also \eqref{eq:Z_star_integrable}.

By symmetry, we have $\mathbb{E}[Z_\star \cdot e_i] = 0$ for all $i \neq 1$. To treat the expectation of $Z_\star \cdot e_1$, we use \eqref{eq:Z_star_decomp} to decompose:
\begin{align*}
&\E\left[(Z_\star \cdot e_1) \mathds{1}_{A}\right] \\
&=  L \cdot \P(A_+) - L \cdot \P(A_+^c \cap A_-) + \E\left[ (Z_\star\cdot e_1 + L) \cdot \mathds{1}_{A_+^c \cap A_-}\right]\\
&=  L \cdot \P(A_+) - L \cdot \P(A_+ \cap A_-^c) + \E\left[ (Z_\star\cdot e_1 + L) \cdot \mathds{1}_{A_+^c \cap A_-}\right]\\
&= L \cdot \P[A_+ \cap A_-] +  \E\left[ (Z_\star \cdot e_1 + L) \cdot \mathds{1}_{A_+^c \cap A_-}\right]\\
&\geq L \cdot \P[(0,0)\rsa \infty]^2 +  \E\left[ (Z_\star\cdot e_1 + L) \cdot \mathds{1}_{A_+^c \cap A_-}\right]
\end{align*}
where the last step follows from the FKG inequality and translation invariance. By \eqref{eq:Z_star_need2}, we can now choose $L$ large enough that the right-hand side is positive, completing the proof. 
\end{proof}

We end this subsection defining $(Z_k', W_k')_{k\geq 0},\; (Z_\star',W_\star')$ exactly as $(Z_k, W_k)_{k \geq 1},\;(Z_\star, W_\star)$, but inverting the roles of $Le_1$ and $-Le_1$. More formally, we define
$$(Z_k',W_k') = (Z_k,W_k) \circ \psi^{1,-1},\qquad (Z_\star',W_\star') = (Z_\star,W_\star) \circ \psi^{1,-1}$$
where $\psi^{1,-1}$ is as defined in \eqref{eq:linear_psi}, that is, it is the linear transformation $\psi^{1,-1}(x_1,x_2,\ldots, x_d) = (-x_1,x_2,\ldots, x_d)$. Choosing $L$ large enough as required by Lemma \ref{lem:choice_of_L}, we then have
\begin{equation}
{\E}[Z_\star' \mid \{(-Le_1,0) \rsa \infty \} \cup \{(Le_1,0) \rsa \infty\}] = -\alpha_L e_1.
\end{equation}

\subsubsection{Putting ingredients together}
In what follows, we always assume that the event $\{(0,0) \rightsquigarrow \infty\}$ occurs. By Lemma \ref{lem:properties_bifurcation}, it follows from this assumption that $U_\star < \infty$, $Y_\star \in \mathbb{Z}^d$.

Recall that, if $U_\star$ is a bifurcation time, then there is exactly one  arrow from $\{Y_\star\} \times [U_\star-3,U_\star-2]$ to  $\{Y_\star-e_1\} \times [U_\star-3,U_\star-2]$ and one  arrow from $\{Y_\star\} \times [U_\star-3,U_\star-2]$ to  $\{Y_\star+e_1\} \times [U_\star-3,U_\star-2]$. Let $t_{(-)}, t_{(+)} \in [U_\star -3,U_\star - 2]$ be the respective times at which these arrows are present. Define 
\begin{equation*}
E = \left\{\text{for some $t \in [U_\star-2, U_\star-1]$, there is a selective arrow from $(Y_\star,t)$ to $(Y_\star + e_1,t)$}\right\}.
\end{equation*}
If $E$ occurs, let $t_{(+)}'\in [U_\star-2, U_\star-1]$ be the first time in $[U_\star -2, U_\star -1]$ at which a selective arrow as mentioned in $E$ appears.

We now define the random variables $\mathcal{T}$ and $\mathcal{X}$ in the statement of Proposition \ref{prop:steer}:
\begin{align} \label{eq:def_of_final_rvs}
(\mathcal{X},\mathcal{T}) = \begin{cases} (Y_\star, U_\star)  + (Z_\star, W_\star) \circ \theta(Y_\star, U_\star) & \text{on } E \cup \{t_{(+)} > t_{(-)}\};\\[.2cm](Y_\star, U_\star)  + (Z'_\star, W'_\star) \circ \theta(Y_\star, U_\star)&\text{on } E^c \cap \{t_{(+)} < t_{(-)}\}.\end{cases}\end{align}
 
\begin{proof}[Proof of Proposition \ref{prop:steer}(1)] 
We will construct an RFSIP $\upgamma_\star: [0,\mathcal{T}] \to \Z^d$ with $\upgamma_\star(0) = 0$ and $\upgamma_\star(\mathcal{T})= \mathcal{X}$. The definition of $\upgamma_\star$ will be split into six cases. In each case, it will be straightforward to verify that $\upgamma_\star$ is either an RFBIP (cases 1-4 and 6) or an RFSIP (case 5). Figure \ref{fig:cases} serves as a useful guide to the construction. 

In all cases, on $[0,U_\star-3]$ we let $\upgamma_\star$ be equal to the unique RFBIP from $(0,0)$ to $(Y_\star, U_\star-3)$ (such an RFBIP exists by the definition of the ancestor process, since $\eta_{U_\star - 3} =  Y_\star$).

\begin{figure}[htb]
\begin{center}
\setlength\fboxsep{0pt}
\setlength\fboxrule{0.5pt}
\fbox{\includegraphics[width = 0.7\textwidth]{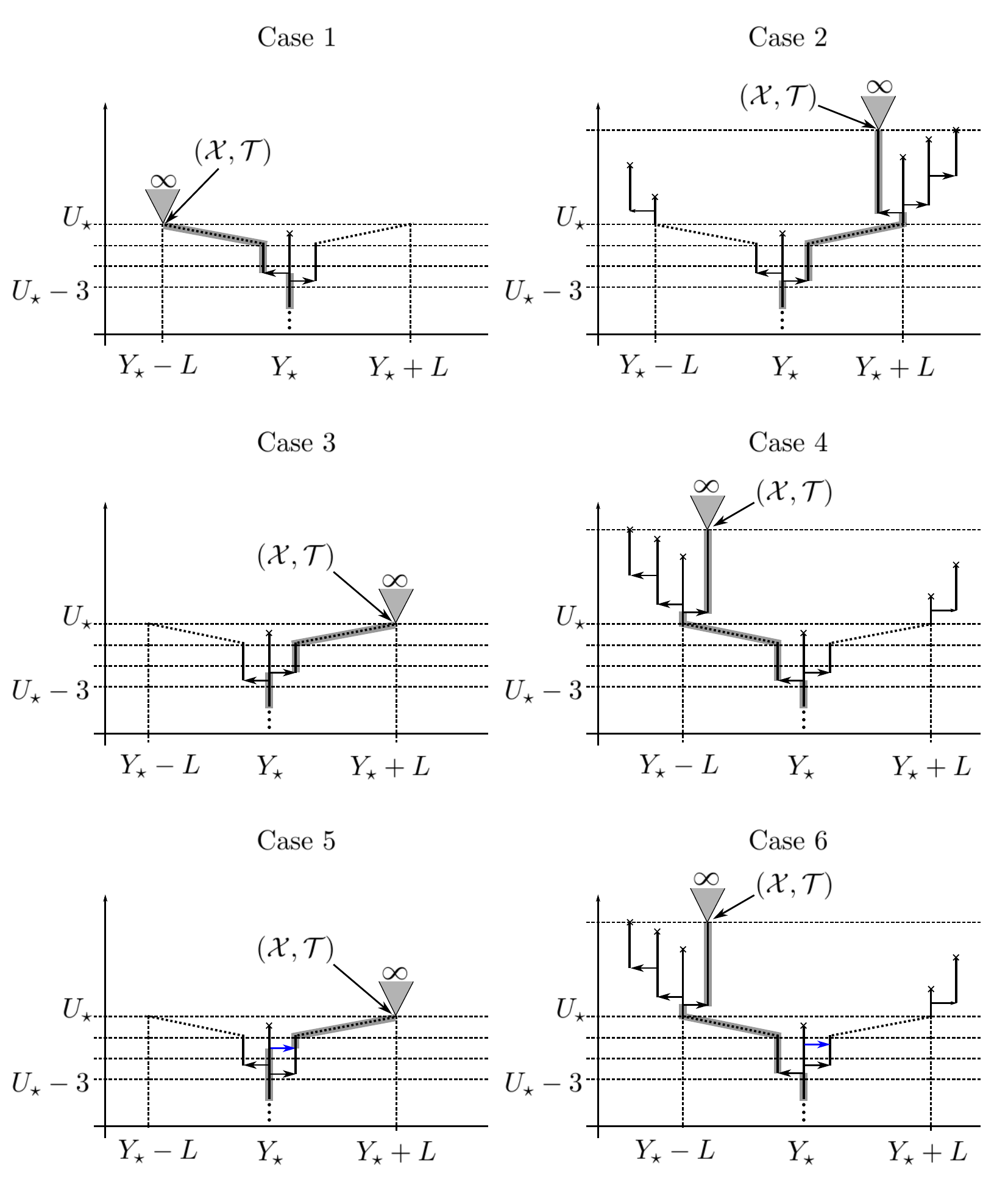}}
\end{center}
\caption{The six cases in the definition of the RFSIP $\upgamma_\star$ from $(0,0)$ to $(\mathcal{X},\mathcal{T})$ (dimension one). The path $\upgamma_\star$ is shown as a thick grey line. A grey triangle with the infinity sign at a space-time point $(x,t)$ indicates that $(x,t)\rightsquigarrow \infty$. In each case one verifies that $\upgamma_\star$ is an RFSIP because whenever it jumps, that is, whenever $\upgamma_\star(t-) = x \neq y = \upgamma_\star(t)$, we have $(x,t+) \not \rightsquigarrow \mathcal{T}$.}
\label{fig:cases}
\end{figure}

\begin{itemize}
\item[Case 1:] $E^c \cap \{t_{(-)} > t_{(+)}\} \cap \{(Y_\star -Le_1,U_\star) \rightsquigarrow \infty\}$ occurs. In this case, we have
\begin{equation}\label{eq:one_of_the_cases}
(\mathcal{X}, \mathcal{T})= (Y_\star, U_\star) + (Z'_\star, W'_\star)\circ \theta(Y_\star, U_\star) 
\end{equation}
and in the translated Harris system $H \circ \theta (Y_\star, U_\star)$ we have $(-Le_1,0) \rightsquigarrow \infty$ and hence $W'_\star \circ \theta(Y_\star, U_\star) = 0$ and $Z'_\star \circ \theta (Y_\star, U_\star) = -Le_1$. Hence, $(\mathcal{X}, \mathcal{T}) =  (Y_\star - Le_1, U_\star)$. Then, we set:
\begin{itemize}
\item[$\bullet$] on $[U_\star - 3,\; \tmin)$, $\upgamma_\star$ equal to $Y_\star$;
\item[$\bullet$] on $[\tmin,\; U_\star] = [\tmin,\;\mathcal{T}]$, $\upgamma_\star$ equal to the unique RFBIP from $(Y_\star -e_1,\; \tmin)$ to $(Y_\star - Le_1,\;U_\star)$.
\end{itemize}
\item[Case 2:] $E^c \cap \{\tmin > \tplus\} \cap \{(Y_\star -Le_1,U_\star) \not\rightsquigarrow \infty\}$ occurs. We again have \eqref{eq:one_of_the_cases}, but now $W'_\star \circ \theta(Y_\star, U_\star) > 0$, so that $\mathcal{T} > U_\star$, and $\mathcal{X} = \eta^{(Y_\star + Le_1, U_\star)}_{\mathcal{T}}$, by the definition of $Z'_\star$. In particular, there is a unique RFBIP from $(Y_\star + Le_1, U_\star)$ to $(\mathcal{X},\mathcal{T})$. 
We set
\begin{itemize}
\item[$\bullet$] on $[U_\star - 3,\; \tplus)$, $\upgamma_\star$ equal to $Y_\star$;
\item[$\bullet$] on $[\tplus,\; U_\star)$, $\upgamma_\star$ equal to the unique RFBIP from $(Y_\star+e_1,\; \tplus)$ to $(Y_\star + Le_1,\;U_\star)$;
\item[$\bullet$] on $[U_\star, \mathcal{T}]$, $\upgamma_\star$ equal to the unique RFBIP from $(Y_\star + Le_1, U_\star)$ to $(\mathcal{X}, \mathcal{T})$.
\end{itemize}
\item[Case 3:] $E^c \cap \{\tplus > \tmin\} \cap \{(Y_\star + Le_1,U_\star) \rightsquigarrow \infty\}$ occurs. We have
\begin{equation} \label{eq:case_S_r}
(\mathcal{X}, \mathcal{T})= (Y_\star, U_\star) + (Z_\star, W_\star)\circ \theta(Y_\star, U_\star)
\end{equation}
and in the translated Harris system $H \circ \theta(Y_\star,U_\star)$ we have $(Le_1,0) \rsa \infty$, so $(\mathcal{X},\mathcal{T}) = (Y_\star + Le_1, U_\star)$. We set:
\begin{itemize}
\item[$\bullet$] on $[U_\star - 3,\; \tplus)$, $\upgamma_\star$ equal to $Y_\star$;
\item[$\bullet$] on $[\tplus,\; U_\star] = [\tplus,\;\mathcal{T}]$, $\upgamma_\star$ equal to the unique RFBIP from $(Y_\star +e_1,\; \tplus)$ to $(Y_\star + Le_1,\;U_\star)$.
\end{itemize}
\item[Case 4:] $E^c \cap \{\tplus > \tmin\} \cap \{(Y_\star + Le_1,U_\star) \not \rightsquigarrow \infty\}$ occurs. We again have \eqref{eq:case_S_r}, but now
$\mathcal{T}$ is some time larger than $U_\star$ and $\mathcal{X} = \eta^{(Y_\star - Le_1,U_\star)}_{\mathcal{T}}$. We set:
\begin{itemize}
\item[$\bullet$] on $[U_\star - 3,\; \tmin)$, $\upgamma_\star$ equal to $Y_\star$;
\item[$\bullet$] on $[\tmin,\; U_\star)$, $\upgamma_\star$ equal to the unique RFBIP from $(Y_\star-e_1,\; \tmin)$ to $(Y_\star - Le_1,\;U_\star)$;
\item[$\bullet$] on $[U_\star, \mathcal{T}]$, $\upgamma_\star$ equal to the unique RFBIP from $(Y_\star - Le_1, U_\star)$ to $(\mathcal{X}, \mathcal{T})$.
\end{itemize}
\item[Case 5:] $E \cap \{(Y_\star + Le_1,U_\star)  \rightsquigarrow \infty\}$. In this case we again have \eqref{eq:case_S_r}. We define $\upgamma_\star$ as in Case 3, with the only difference that we replace $\tplus$ by $\tplus'$ everywhere.
\item[Case 6:] $E \cap \{(Y_\star + Le_1,U_\star)  \not\rightsquigarrow \infty\}$. As in Case 4, we have  \eqref{eq:case_S_r} with $\mathcal{T} > U_\star$ and $\mathcal{X} = \eta^{(Y_\star -Le_1,U_\star)}_{\mathcal{T}}$. We define $\upgamma_\star$ exactly as in Case 4.
\end{itemize}

\end{proof}

\begin{proof}[Proof of Proposition \ref{prop:steer}(2)-(4)]
Statement (2) follows directly from Lemmas \ref{lem:fit_U_star} and \ref{lem:law_ing2}. The fact that 
$$\mathbb{E}\left[\exp(\sigma \mathcal{T}) \mid (0,0) \rsa \infty\right] < \infty$$
if $\sigma$ is small enough follows from  \eqref{eq:tail_U_star}, Lemma \ref{lem:fit_U_star} and \eqref{eq:tail_hat}. Now, let
$$M_t = \sup\{\|x\|: x \in \Z^d,\; \text{there is an SIP from $(0,0)$ to $(x,s)$ for some } s \leq t \},\quad t \geq 0.$$
Noting that there is an SIP from $(0,0)$ to $(\mathcal{X}, \mathcal{T})$ we have, for any $\alpha \in (0,1)$ and $x > 0$,
\begin{align*}
\mathbb{P}\left[\mathcal{X} \cdot \mathds{1}_{\{(0,0) \rsa \infty\}} > x\right] \leq & \P\left[(0,0) \rsa \infty,\; \mathcal{T} > \alpha x\right] + \P\left[M_{\alpha x} > x \right].
\end{align*}
We can then bound the two terms on the right-hand side as we did in \eqref{eq:aux_tail10} and \eqref{eq:aux_tail11} to show that, if $\sigma$ is small enough,
$$\mathbb{E}\left[\exp(\sigma \mathcal{X}) \mid (0,0) \rsa \infty\right] < \infty.$$
Hence, (3) is proved. 

We now turn to (4). We abbreviate
\begin{align*}&\widetilde{\mathbb{P}}(\cdot) = \mathbb{P}(\cdot \mid (0,0) \rsa \infty),\quad \widetilde{\mathbb{E}}(\cdot) = \mathbb{E}(\cdot \mid (0,0) \rsa \infty),\\ &\widehat{\mathbb{E}}(\cdot) = \mathbb{E}(\cdot \mid \{(-Le_1,0) \rsa \infty\} \cup \{(Le_1,0) \rsa \infty\}). \end{align*}
We start with the equalities
\begin{equation} \label{eq:ingredients1} \begin{split}
\widetilde{\E}[\mathds{1}_{E \cup \{t_{(+)} > t_{(-)}\}} \cdot \mathcal{X}] &\stackrel{\eqref{eq:def_of_final_rvs}}{=} \widetilde{\E}[\mathds{1}_{E \cup \{t_{(+)} > t_{(-)}\}} \cdot (Y_\star+ Z_\star \circ \theta(Y_\star, U_\star))]\\[.2cm]
&\stackrel{\eqref{eq:distr_ingr_1}}{=} \widetilde{\E}[\mathds{1}_{E \cup \{t_{(+)} > t_{(-)}\}} \cdot Y_\star] + \widetilde{\P}[E \cup \{t_{(+)} > t_{(-)}\}] \cdot \widehat{\E}[Z_\star];\end{split}
\end{equation}
\begin{equation} \label{eq:ingredients2} \begin{split}
\widetilde{\E}[\mathds{1}_{E^c \cap \{t_{(+)} < t_{(-)}\}} \cdot \mathcal{X}] &\stackrel{\eqref{eq:def_of_final_rvs}}{=} \widetilde{\E}[\mathds{1}_{E^c \cap \{t_{(+)} < t_{(-)}\}} \cdot (Y_\star+ Z'_\star \circ \theta(Y_\star, U_\star))]\\[.2cm]
&\stackrel{\eqref{eq:distr_ingr_1}}{=} \widetilde{\E}[\mathds{1}_{E^c \cap \{t_{(+)} < t_{(-)}\}} \cdot Y_\star] + \widetilde{\P}[E^c \cap \{t_{(+)} < t_{(-)}\}] \cdot \widehat{\E}[Z'_\star];\end{split}
\end{equation}
By symmetry, $\widetilde{\E}[Y_\star] = 0$ and $\widehat{\E}[Z_\star] = - \widehat{\E}[Z'_\star]$, so adding together \eqref{eq:ingredients1} and \eqref{eq:ingredients2} yields
\begin{equation} \label{eq:ingredients3}\widetilde{\E}[\mathcal{X}] = \widehat{\E}[Z_\star] \cdot (\widetilde{\P}[E \cup \{t_{(+)} > t_{(-)}\}] - \widetilde{\P}[E^c \cap \{t_{(+)} < t_{(-)}\}]).\end{equation}
Noting that $E$ only depends on the presence of a selective arrow on $[U_\star-2, U_\star -1]$ and again using symmetry, we have
\begin{align*} &\widetilde{\P}[E^c \cap \{t_{(+)} < t_{(-)}\}] = e^{-(\lambda_1 - \lambda_2)} \cdot \widetilde{\P}[t_{(+)} < t_{(-)}]= \frac12 \cdot e^{-(\lambda_1 - \lambda_2)} < \frac12 \\[.2cm]&\hspace{7cm} \Longrightarrow \quad \widetilde{\P}[E \cup \{t_{(+)} > t_{(-)}\}] > \frac12.
\end{align*}
Using this and the fact that $\widehat{\E}[Z_\star] > 0$ in \eqref{eq:ingredients3} concludes the proof.
\end{proof}

\section{Appendix}\label{s:appendix}
\subsection{Proofs of results of Section \ref{s:times}}
\label{s:proofs_5}
We let $(\scF_t)_{t \geq 0}$ denote the natural filtration of the Poisson point processes in $\mathbb{H}$ (that is, for each $t$, $\scF_t$ is the $\sigma$-algebra generated by $\mathbb{H}_{[0,t]}$). Before turning to the statements of Section \ref{s:times}, we state and prove a preliminary result.
\begin{lemma} \label{lem:behavior_EU}
There exists $\sigma_0 > 0$ such that
\begin{equation}\label{eq:behavior_EU_init}
\mathbb{E}\left[\exp({\sigma_0 U_1})\cdot \mathds{1}_{\{U_1 < \infty\}} \right] < \infty
\end{equation}
and, for any $k$, on $\{V_k < \infty,\; \eta_{V_k} \neq \triangle\}$,
\begin{equation}\label{eq:behavior_EU}
\mathbb{E}\left[\exp({\sigma_0 (U_{k+1} - V_k)})\cdot \mathds{1}_{\{U_{k+1} < \infty\}} \mid \scF_{V_k}\right] < \infty.
\end{equation}
\end{lemma}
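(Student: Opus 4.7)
The plan is to establish a uniform positive lower bound on the conditional probability of a bifurcation occurring in a fixed-length time window given that the ancestor is alive at the start of the window, and then to iterate.

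Concretely, set $t_k = 4k$ for $k \geq 1$ and let $\tilde B_k$ be the event that $t_k$ is a bifurcation time of $\eta$. The defining conditions (1)--(7) at time $t_k$ with $y = \eta_{t_k - 3}$ depend only on the Poisson events of $H$ inside the space-time box $B_y(M) \times (4k - 3, 4k]$, for some constant $M = M(L,R)$. Since these events are independent of $\scF_{4k-3}$ while $\eta_{4k-3}$ is $\scF_{4k-3}$-measurable, translation invariance gives
\begin{equation*}
\P(\tilde B_k \mid \scF_{4k-3}) \geq p \cdot \mathds{1}_{\{\eta_{4k-3} \neq \triangle\}},
\end{equation*}
where $p = p(L,R,d,\lambda_1,\lambda_2) > 0$ is the probability of a concrete local pattern (for instance, arrows $(y + je_1, s_j) \to (y + (j+1)e_1, s_j)$ and $(y - je_1, s'_j) \to (y - (j+1)e_1, s'_j)$ at ordered times in $(4k-1, 4k)$, together with sufficiently many death marks to enforce the uniqueness clauses and the no-arrow/no-death conditions (1)--(5)) forcing all seven conditions at once.

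For \eqref{eq:behavior_EU_init}, note that $\{U_1 > n\} \subseteq \tilde B_1^c \cap \cdots \cap \tilde B_{\lfloor n/4 \rfloor}^c$ and that $\{U_1 > n,\; \eta_{n-3} = \triangle\} \subseteq \{U_1 = \infty\}$, since $\eta_s = \triangle$ for all $s \geq n-3$ precludes any later bifurcation. Iterating the lower bound above yields
\begin{equation*}
\P(U_1 > n,\; U_1 < \infty) \leq \P\bigl( \tilde B_1^c \cap \cdots \cap \tilde B_{\lfloor n/4 \rfloor}^c,\; \eta_{n-3} \neq \triangle \bigr) \leq (1 - p)^{\lfloor n/4 \rfloor},
\end{equation*}
which delivers exponential integrability of $U_1 \mathds{1}_{\{U_1 < \infty\}}$ for any $\sigma_0 < \tfrac{1}{4} \log \tfrac{1}{1-p}$. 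The argument for \eqref{eq:behavior_EU} is identical after translating in time: $V_k$ is a stopping time for $(\scF_t)$, because on $\{U_k \leq t\}$ the event $\{V_k \leq t\}$ coincides with the $\scF_t$-measurable statement that the descendants of $\{(Y_k - Le_1, U_k),(Y_k + Le_1, U_k)\}$ are all dead at time $t$. By the strong Markov property, the Poisson processes restricted to $(V_k, \infty)$ are independent of $\scF_{V_k}$, and one repeats the construction with candidate times $V_k + 3 + 4j$.

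The main technical point to check is that the uniqueness clauses in conditions (6)--(7) can indeed be forced by a positive-probability event on the bounded box; this amounts to prohibiting the finitely many alternative basic infection paths possible inside the box, which is a finite collection of constraints on the Poisson events of $H$ there and hence has probability bounded below by a positive constant depending only on $L, R, d, \lambda_1, \lambda_2$.
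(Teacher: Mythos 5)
Your proposal is correct and follows essentially the same route as the paper: both arguments hinge on the observation that, given the ancestor is alive at the start of a fixed-length time window, a bifurcation pattern can be forced there by prescribing finitely many Poisson events in a bounded space-time box (giving a uniform lower bound $p>0$ by spatial translation invariance and independence from the past), and then iterate over successive windows to get a geometric tail for $U_1$ on $\{\eta_\cdot \neq \triangle\}$, finishing with the strong Markov property at the stopping time $V_k$ for the second assertion. The only cosmetic difference is that you scan deterministic times $4k$ with disjoint windows of length $4$, whereas the paper runs the same recursion directly in steps of $3$ via the shifted event $\mathbb{H}\circ\theta(\eta_{t-3},t-3)\in\mathcal{E}$.
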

\begin{proof}
Let $\mathcal{E}$ be the set of augmented Harris systems for which $t = 3$ is a bifurcation time. We have $\P(\mathcal{E}) > 0$, since the occurrence of $\mathcal{E}$ can be guaranteed by making prescriptions on finitely many Poisson processes on the time interval $[0,3]$.

For $t \geq 3$, we have
\begin{align*}
\P \left[ \eta_t \neq \triangle,\; U_1 > t\right] &\leq \P\left[ \eta_{t-3}\neq \triangle,\;U_1 > t-3,\; \mathbb{H}\circ \theta(\eta_{t-3},t-3) \notin \mathcal{E} \right]\\
&= \P\left[ \eta_{t-3}\neq \triangle,\;U_1 > t-3 \right] \cdot \P(\mathcal{E}^c),
\end{align*}
and iterating we show that the right-hand side is less than $\P(\mathcal{E}^c)^{\lfloor t/3\rfloor}$, proving that, if $c > 0$ is small enough,
\begin{equation}\label{eq:behavior_U_init}
\P \left[ \eta_t \neq \triangle, \; U_1 > t\right] < e^{-ct},\qquad t \geq 3.
\end{equation}
Then, noting that $\{U_1 < \infty\} \subset \{\eta_t \neq \triangle\;\forall t < U_1\}$,
\begin{align*}
\E\left[e^{\sigma U_1}\cdot \mathds{1}_{\{U_1 < \infty \}} \right] &= \int_0^\infty \P \left[e^{\sigma U_1}\cdot \mathds{1}_{\{U_1 < \infty \}} > x \right] \mathsf{d} x \\
&\leq \int_0^\infty \P\left[ U_1 > \frac{\log(x)}{\sigma},\; \eta_t \neq \triangle\; \forall t < U_1\right] \mathsf{d} x
\stackrel{\eqref{eq:behavior_U_init}}{ \leq} \int_0^\infty e^{-\frac{c}{\sigma} \log(x)}\mathsf{d} x < \infty
\end{align*}
if $\sigma \in (0,c)$.

To prove \eqref{eq:behavior_EU}, we argue as above (also  using the strong Markov property with respect to the stopping time $V_k$) to obtain that, for any $k$, on the event $\{ V_k < \infty,\; \eta_{V_k }\neq \triangle\}$, 
\begin{equation}\label{eq:behavior_U}
\P\left[ \eta_{V_k + t} \neq \triangle,\; U_{k+1} > V_k + t \mid \scF_{V_k}\right] < e^{-ct},\qquad t > 0;
\end{equation}
we then complete the proof as above.
\end{proof}

\begin{proof}[Proof of Lemma \ref{lem:properties_bifurcation}] To prove \eqref{eq:U_star_finite}, start noting that, for all $k \geq 1$,
\begin{align*}
&\P\left[ U_{k+1} = \infty \mid V_k < \infty,\; (0,0) \rsa \infty\right] \\\hspace{2cm}&= \P[(0,0) \rightsquigarrow \infty,\; V_k < \infty]^{-1} \lim_{t \to \infty} \P[(0,0) \rightsquigarrow \infty,\;V_k < \infty,\;U_{k+1} > V_k + t]\\[.2cm]
&\leq \P[(0,0) \rightsquigarrow \infty,\; V_k < \infty]^{-1} \lim_{t \to \infty} \P[ V_k < \infty,\;\eta_{V_k + t} \neq \triangle,\;U_{k+1} > V_k + t] \stackrel{\eqref{eq:behavior_U}}{=} 0.
\end{align*}
Similarly, by \eqref{eq:behavior_U_init},
$$\P[U_1 = \infty \mid (0,0) \rsa \infty] = 0.$$
Next, for $k \geq 1$,
\begin{align*}\P[V_k < \infty] &= \mathbb{E}\left[\mathds{1}_{\{U_k < \infty\}} \cdot \P[V_k < \infty\mid \scF_{U_k}]\right] \\&= \P[U_k < \infty] \cdot \P\left[ (-Le_1,0) \not \rsa \infty,\; (Le_1,0) \not \rsa \infty\right],
\end{align*}
and iterating,
$$\P[V_k < \infty] \leq  \P\left[ (-Le_1,0) \not \rsa \infty,\; (Le_1,0) \not \rsa \infty\right]^k,$$
so
\begin{align*}&\P[V_k < \infty \; \forall k ]  = 0 \quad \Longrightarrow \quad\P[V_k < \infty \; \forall k \mid (0,0) \rsa \infty]  = 0.
\end{align*}
Putting these facts together, we see that conditionally to $\{(0,0) \rsa \infty\}$, almost surely there exists $k$ such that $U_k < \infty$ and $V_k = \infty$, completing the proof of \eqref{eq:U_star_finite}.

We now turn to \eqref{eq:tail_U_star}. 
Fix $\sigma > 0$. By \eqref{eq:U_star_finite} we have
\begin{equation}\label{eq:sum_U_star}
 \E[e^{\sigma U_\star} \mid (0,0) \rsa \infty] \leq \sum_{k=1}^\infty \E\left[e^{\sigma U_k}\cdot \mathds{1}_{\{U_k < \infty\}} \right].
\end{equation}
Fix $k \geq 2$. We have
\begin{align*}
\E\left[e^{\sigma U_k}\cdot \mathds{1}_{\{U_k < \infty\}} \right] &= \E\left[e^{\sigma V_{k-1}} \cdot \mathds{1}_{\{V_{k-1}< \infty, \;\eta_{V_{k-1}}\neq \triangle\}}\cdot \E\left[ e^{\sigma(U_k - V_{k-1})}\cdot \mathds{1}_{\{U_k < \infty \}} \mid \scF_{V_{k-1}}\right]\right]\\
&\leq C_\sigma \cdot \E \left[e^{\sigma V_{k-1}} \cdot \mathds{1}_{\{V_{k-1} < \infty\}} \right]
\end{align*}
for some $C_\sigma \in (0,\infty)$ if $\sigma$ is small enough, by \eqref{eq:behavior_EU}. Next, we have
\begin{align*}
\E \left[e^{\sigma V_{k-1}} \cdot \mathds{1}_{\{V_{k-1} < \infty\}} \right] &= \E\left[e^{\sigma U_{k-1}}\cdot \mathds{1}_{\{U_{k-1} < \infty \}} \cdot \E\left[e^{\sigma (V_{k-1} - U_{k-1})}\cdot \mathds{1}_{\{V_{k-1}<\infty\}} \mid \scF_{U_{k-1}}\right]\right]\\
&= g(\sigma) \cdot \E\left[e^{\sigma U_{k-1}}\cdot \mathds{1}_{\{U_{k-1} < \infty \}} \right],
\end{align*}
where
$$g(\sigma) = \mathbb{E}\left[ \exp\left(\sigma \cdot T^{\{-Le_1,Le_1\}} \right)\cdot \mathds{1}_{\{T^{\{-Le_1,Le_1\} } < \infty \} }\right],$$
with $T^{\{-Le_1,Le_1\}}$ as in \eqref{eq:definition_death_time}. Iterating these bounds, we obtain
\begin{equation}\E\left[e^{\sigma U_k}\cdot \mathds{1}_{\{U_k < \infty\}} \right] < (C_\sigma \cdot g(\sigma))^k,\qquad k \geq 2.\label{eq:bound_geometric}\end{equation}
Now, by the Dominated Convergence Theorem, \eqref{eq:dies_quickly}, and the fact that 
$$\P\left[ T^{\{-Le_1,Le_1\}} < \infty\right] < 1,$$
we can reduce $\sigma$ so that $g(\sigma) < 1$, and then reduce it further so that $C_\sigma \cdot g(\sigma) < 1$. By \eqref{eq:behavior_EU_init}, \eqref{eq:sum_U_star}, and \eqref{eq:bound_geometric}, the proof of \eqref{eq:tail_U_star} is complete.
\end{proof}

\begin{proof}[Proof of Lemma \ref{lem:fit_U_star}] 
\begin{align*}
&\P\left[ \mathbb{H}_{[0,U_\star]} \in E_1,\;  \mathbb{H} \circ \theta(Y_\star,U_\star) \in E_2 \mid (0,0) \rsa \infty\right] \\&= \sum_{y \in \Z^d}\sum_{k=1}^\infty \P[(0,0)\rightsquigarrow \infty]^{-1} \cdot \P\left[\begin{array}{l}U_k < \infty,\; Y_k = y,\; \mathbb{H}_{[0,U_k]} \in E_1,\\[.3cm] \; (y-Le_1,U_k) \rsa \infty \text{ or } (y+Le_1,U_k) \rsa \infty,\\[.3cm] \mathbb{H} \circ \theta(y, U_k) \in E_2\end{array} \right]\\[.2cm]
&= \P\left[\mathbb{H} \in E_2 \mid \{(-Le_1,0) \rightsquigarrow \infty\} \cup\{(Le_1,0) \rightsquigarrow \infty\} \right]\\
&\hspace{2cm} \cdot \sum_{y \in \Z^d}\sum_{k=1}^\infty \P[(0,0)\rightsquigarrow \infty]^{-1} \cdot \P\left[\begin{array}{l}U_k < \infty,\; Y_k = y,\; \mathbb{H}_{[0,U_k]} \in E_1,\\[.3cm] \;(y-Le_1,U_k) \rsa \infty \text{ or } (y+Le_1,U_k) \rsa \infty\end{array} \right]\\[.2cm]
&=\P\left[\mathbb{H} \in E_2 \mid \{(-Le_1,0) \rightsquigarrow \infty\} \cup\{(Le_1,0) \rightsquigarrow \infty\} \right] \cdot \P[\mathbb{H}_{[0,U_\star]} \in E_1 \mid (0,0) \rsa \infty].
\end{align*}
\end{proof}

\begin{proof}[Proof of Lemma \ref{lem:law_ing2}]  We abbreviate
\begin{align*}
&A = \{(Le_1,0)\rsa \infty\} \cup \{(-Le_1,0) \rsa \infty\};\\
&B_0 = \{(Le_1,0) \rsa \infty\},\qquad B_{k,z} = \{W_\star = W_k < \infty,\; Z_k = z\},\; k \geq 1,\;z\in \Z^d;\\
&D = \{W_\star < \infty,\; \mathbb{H}_{[0,W_\star]} \in E_1,\; \mathbb{H} \circ \theta(Z_\star, W_\star) \in E_2\}.
\end{align*}
We then have
\begin{align}
\P(D\mid A) &= \P(A)^{-1}\cdot \left(\P( D \cap B_0) + \sum_{k\geq 1, z \in \Z^d}\P( D \cap B_{k,z})\right)\label{eq:long_id}
\end{align}
Now note that
\begin{align*}
\P(D \cap B_0) &= \P\left[(Le_1,0) \rsa \infty,\; \mathbb{H}_{\{0\}} \in E_1,\; \mathbb{H} \circ \theta(Le_1,0) \in E_2\right] \\
&= \P\left[(Le_1,0) \rsa \infty,\; \mathbb{H}_{\{0\}} \in E_1\right] \cdot \P[\mathbb{H} \in E_2 \mid (0,0) \rsa \infty];
\end{align*}
here, $\mathbb{H}_{\{0\}}$ is the trivial (almost surely empty) restriction of $\mathbb{H}$ to the degenerate interval $\{0\}$. Moreover, for $k \geq 1$ and $z \in \Z^d$,
\begin{align*}
\P(D \cap B_{k,z}) &= \P\left[W_k < \infty,\; Z_k = z,\; (Z_k, W_k) \rsa \infty,\; \mathbb{H}_{[0,W_k]} \in E_1,\; \mathbb{H} \circ \theta(Z_k,W_k) \in E_2\right]\\[.2cm]
&= \P\left[ W_k < \infty,\; Z_k = z,\; (Z_k, W_k) \rsa \infty,\; \mathbb{H}_{[0,W_k]} \in E_1\right] \cdot \P[\mathbb{H} \in E_2 \mid (0,0) \rsa \infty].
\end{align*}
Using these identities, the right-hand side of \eqref{eq:long_id} is seen to be equal to
\begin{align*}
&\frac{\P[\mathbb{H} \in E_2 \mid (0,0) \rsa \infty]}{\P(A)} \cdot \left(\P\left[\begin{array}{l}(Le_1,0) \rsa \infty,\\ \mathbb{H}_{\{0\}} \in E_1 \end{array} \right] + \sum_{k\geq 1,z\in\Z^d} \P\left[\begin{array}{l}W_k < \infty, Z_k = z,\\
(Z_k,W_k) \rsa \infty,\; \mathbb{H}_{[0,W_k]} \in E_1 \end{array} \right]\right)\\[.2cm]
&= {\mathbb{P}}[\mathbb{H} \in E_2 \mid (0,0) \rsa \infty] \cdot {\P}[\mathbb{H}_{[0,W_\star]} \in E_1 \mid A],
\end{align*}
since 
$$\{(Le_1,0) \rsa \infty\} \subseteq A,\qquad \{W_k < \infty,\;Z_k = z,\; (Z_k, W_k) \rsa \infty\} \subseteq \{(-Le_1,0) \rsa \infty\} \subseteq A.$$
\end{proof}

\subsection{Proofs of results for steered random walks}
We will need the following elementary facts about sums of independent and identically distributed random variables:
\begin{lemma}\label{lem:large_deviations}
Let $Y_1, Y_2,\ldots$ be independent and identically distributed random variables, and let $Z_0 = 0$ and $Z_n = \sum_{i=1}^n Y_i$ for $n \geq 1$.
\begin{enumerate}
\item For any $\ell > 0$, letting $h^Z_{[\ell,\infty)} = \inf\{n: Z_n \geq \ell\}$,
\begin{equation}\label{eq:neat_form} \P\left[h^Z_{[\ell,\infty)}<\infty,\;Z_{h^Z_{[\ell,\infty)}} - \ell \geq x\right] \leq \left(\sum_{n=0}^\infty \P[Z_n = 0] \right) \cdot \left( \sum_{i=x+1}^\infty \P[Y_1 \geq i]\right),\quad x > 0.\end{equation}
\item If $\rho = \E[Y_1] \neq 0$ and $\E[\exp(\kappa |Y_1|)] < \infty$ for some $\kappa > 0$, then for all $\varepsilon > 0$ there exists $c > 0$ such that
\begin{align}\label{eq:concentration}
&\P \left[ (\rho-\varepsilon) n \leq Z_n \leq (\rho + \varepsilon)n \right] > 1  - 2\exp(-cn),\quad n \in \mathbb{N} \text{ and }\\[.2cm]\label{eq:minimum_not_small}
&\P\left[  -\ell + (\rho-\varepsilon) n < Z_n < \ell + (\rho + \varepsilon) n \;\forall n\right] > 1- 2\exp(-c\ell),\quad \ell \geq 0.
\end{align}
\end{enumerate}
\end{lemma}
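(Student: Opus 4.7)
For part (1), I would condition on the state of the walk at the step just before it crosses level $\ell$. Write
\begin{equation*}
\P\left[h^Z_{[\ell,\infty)} < \infty,\; Z_{h^Z_{[\ell,\infty)}} - \ell \geq x\right] = \sum_{n \geq 1} \sum_{m \geq 1} \P\left[Z_1 < \ell,\ldots, Z_{n-1} = \ell - m,\; Y_n \geq m + x\right].
\end{equation*}
Using the independence of $Y_n$ from $(Y_1,\ldots, Y_{n-1})$ (and hence from $(Z_1,\ldots, Z_{n-1})$), each summand is bounded by $\P[Z_{n-1} = \ell - m] \cdot \P[Y_1 \geq m + x]$. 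Summing first over $n$,
\begin{equation*}
\sum_{n \geq 1} \P[Z_{n-1} = \ell - m] = \sum_{n \geq 0} \P[Z_n = \ell - m] = \E\left[\#\{n \geq 0 : Z_n = \ell - m\}\right].
\end{equation*}
I would then apply the strong Markov property of $(Z_n)$ at the first hitting time of $\ell - m$: since the post-hitting walk has the same law as a translate of the original walk started from $0$, this expected number of visits is at most $\E[\#\{n \geq 0: Z_n = 0\}] = \sum_{n \geq 0} \P[Z_n = 0]$. Substituting $i = m+x$ in the remaining sum over $m$ yields the stated bound \eqref{eq:neat_form}.

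For part (2), both statements follow from standard Chernoff/Cram\'er estimates. Set $\phi(\lambda) = \E[\exp(-\lambda(Y_1 - \rho))]$, which is finite for $\lambda$ in a neighborhood of $0$ by the exponential-moment assumption and satisfies $\phi(0) = 1$, $\phi'(0) = 0$, so that $\phi(\lambda) \leq \exp(\lambda \varepsilon/4)$ for all $\lambda$ small enough. Markov's inequality applied to $\exp(-\lambda(Z_n - \rho n))$ gives, for any such $\lambda > 0$ and any $t \geq 0$,
\begin{equation*}
\P[Z_n - \rho n \leq -t - \varepsilon n] \leq \phi(\lambda)^n \cdot \exp(-\lambda(t + \varepsilon n)) \leq \exp\left(-\lambda t - \tfrac{\lambda \varepsilon}{2} n\right),
\end{equation*}
and symmetrically for the upper tail (using $\E[\exp(\lambda(Y_1 - \rho))]$). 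Taking $t = 0$ yields \eqref{eq:concentration}. Summing the Chernoff bound and its mirror image over all $n \geq 0$ (geometric series in $n$, so the total mass is $O(\exp(-\lambda\ell))$) and taking $t = \ell$ yields \eqref{eq:minimum_not_small}.

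The only nontrivial step is the translation/strong-Markov argument used to bound $\sum_n \P[Z_n = k]$ uniformly in $k$ by $\sum_n \P[Z_n = 0]$; everything else is either a direct first-step decomposition or a standard exponential tilt. I expect no real obstacle beyond making sure in the decomposition for part (1) that the strict inequalities $Z_j < \ell$ for $j < n$ are correctly handled (they can simply be dropped as an upper bound without losing the structure of the estimate).
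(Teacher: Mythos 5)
Your proof is correct and, for part (1), follows essentially the same route as the paper: decompose according to the position just before the first crossing of level $\ell$, use independence of the last increment, and bound the expected number of visits to any site by $\sum_n \P[Z_n=0]$ (the paper asserts this last inequality without the strong-Markov justification you supply, but it is the same fact). For part (2) the paper simply cites standard large deviation estimates, and your Chernoff/union-bound argument is exactly the standard proof being invoked, so there is nothing to compare.
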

\begin{proof}
The second statement follows from standard large deviation estimates for random walks, so we will only prove the first one. We have:
\begin{align*}
\P\left[h^Z_{[\ell,\infty)} < \infty,\; Z_{h^Z_{[\ell,\infty)}} - \ell \geq x\right] & = \sum_{n=0}^\infty\; \sum_{y=-\infty}^{\ell -1} \P\left[h^Z_{[\ell,\infty)} = n +1,\; Z_n = y,\; Z_{n+1} - \ell \geq x \right]\\[.2cm]
&\leq \sum_{n=0}^\infty\;\sum_{y = -\infty}^{\ell - 1} \P[Z_n  = y]\cdot \P[Z_1 \geq x+\ell -y]\\
&\leq \sum_{i=x+1}^\infty \P[Z_1 \geq i] \cdot \sum_{n=0}^\infty \P[Z_n = 0]
\end{align*}
where the last inequality holds since, for all $y \in \Z$,
$$\E\left[ \#\{n: Z_n = y\}\right] \leq \E\left[ \#\{n: Z_n = 0\}\right]. $$
\end{proof}

Throughout this section, we will consider a sequence of independent and identically distributed random vectors
$$(X_1,\uptau_1),\;(X_2,\uptau_2),\ldots \in \Z \times (0,\infty)$$
satisfying
\begin{equation}\label{eq:assume_tail}
\E\left[\exp(\sigma \cdot |X_1|)\right] < \infty,\; \E\left[ \exp(\sigma\cdot \uptau_1)\right] < \infty \text{ for some } \sigma > 0
\end{equation}
and
\begin{equation}\label{eq:assume_drift}
\E[X_1] > 0.
\end{equation}
Taking in addition $(x_0,t_0) \in \Z \times \R$, we define a sequence $(S_n,T_n)_{n \geq 0}$ by letting $(S_0,T_0) = (x_0,t_0)$ and, for $n \geq 1$,
$$ T_n - T_{n-1} = \uptau_n,\qquad S_n - S_{n-1} = \begin{cases} X_n&\text{if } S_{n-1} \le 0;\\ -X_n&\text{if } S_{n-1} > 0,\end{cases}$$
so that $(T_n)$ is a renewal process and $(S_n)$ is a Markov chain on $\Z$ which on $\Z\backslash \{0\}$ has a drift in the direction of 0. Define $(S^+_n)$ by letting $S^+_0 = S_0 = x_0$ and 
\begin{equation}\label{eq:def_S+}
S^+_{n} = S^+_{n-1} + X_n,\quad n \geq 1. 
\end{equation}
For $A \subset \Z$, define the \textit{hitting times}
\begin{equation}\label{eq:def_hit}
h^S_A = \inf\{n \geq 0: S_n \in A\},\; h^{S^+}_A = \inf\{n \geq 0: S^+_n \in A\}, \; h^T_A = \inf\{n \geq 0: T_n \in A\}.
\end{equation}
Finally, let 
$$\mu = \E[X_1],\qquad \nu = \E[\uptau_1],\qquad \bar{\beta} = \mu/\nu.$$
Our goal is to prove:
\begin{proposition}\label{prop:cone}
For any $\beta < \bar{\beta}$ there exist $c > 0$ and $\ell_0 > 0$ such that the following holds. If $\ell \ge \ell_0$, $t \ge 0$, $|x_0| \le \beta t$, and $(S_0,T_0) = (x_0,0)$, then
$$\P\left[\left(S_{h^T_{[t,\infty)}}, T_{h^T_{[t,\infty)}}\right) \in [-\ell,\ell] \times [t, t+\ell]\;  \right] > 1-\exp(-c\ell). $$
\end{proposition}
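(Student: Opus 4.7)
The plan is to split the complement of the event in the statement into the time-overshoot event $\{T_N - t > \ell\}$ and the spatial event $\{|S_N| > \ell\}$, where $N = h^T_{[t,\infty)}$, and bound the probability of each by $\tfrac{1}{2}\exp(-c\ell)$.

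For the overshoot, I note that $T_N - t \le \uptau_N$ and bound $\P[\uptau_N > \ell]$ uniformly in $t$ via
\[\P[\uptau_N > \ell] = \int_0^t U(\mathsf{d} s)\,\P[\uptau_1 > \max(\ell, t-s)],\]
where $U(\mathsf{d} s) = \sum_n \P[T_n \in \mathsf{d} s]$ is the renewal measure. Splitting at $s = t-\ell$, the contribution of $s \in [t-\ell,t]$ is at most $\P[\uptau_1 > \ell]\cdot U[t-\ell,t] \le C(\ell+1)e^{-c\ell}$ (using the uniform linear bound $U[a,b]\le C(b-a+1)$ valid under \eqref{eq:assume_tail}), and the contribution of $s \in [0,t-\ell]$ is at most $C\int_\ell^\infty \P[\uptau_1>u]\,\mathsf{d} u \le Ce^{-c\ell}$, again by \eqref{eq:assume_tail}.

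For the spatial coordinate the key structural observation is that $Y_n := |S_n|$ is itself a Markov chain satisfying $Y_{n+1} = |Y_n - X_{n+1}|$, namely a reflected random walk on $\Z_{\ge 0}$ with drift $-\mu$ away from $0$. By symmetry assume $x_0 \ge 0$; choose $\epsilon > 0$ small enough that $(1+\epsilon)\beta < (1-\epsilon)\bar{\beta}$, set $n_\pm = \lfloor (1\pm\epsilon)t/\nu \rfloor$, and use Lemma \ref{lem:large_deviations}(2) on $T_n=\sum_{i\le n}\uptau_i$ to get $\P[N \notin [n_-,n_+]] \le C\exp(-c_0 t)$. I then run Foster--Lyapunov with $V(y)=e^{\eta y}$ for small $\eta>0$: writing $\phi(s):=\E[e^{sX_1}]$, a direct computation gives, for $y$ sufficiently large,
\[\E[V(Y_{n+1})\mid Y_n=y] \le V(y)\bigl(\phi(-\eta)+Ce^{-\eta y}\bigr) \le \rho V(y), \qquad \rho = 1-\eta\mu + O(\eta^2) < 1,\]
while for bounded $y$ the conditional expectation is at most a constant $K$; iterating yields $\E[V(Y_n)] \le \rho^n V(x_0) + K/(1-\rho)$ for every $n$. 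Separately, from the optional-stopping bound $\E[V(Y_{n\wedge\tau_K})\rho^{-(n\wedge\tau_K)}] \le V(x_0)$ applied to $\tau_K := \inf\{n: Y_n\le K\}$ with $K$ fixed, I obtain $\P[\tau_K > n_-] \le Ce^{-c_2 x_0}$, using that the choice of $\epsilon$ guarantees $n_-\mu - x_0 \ge \delta t \ge \delta x_0/\beta$.

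To combine, I split into two cases. If $x_0 \le \ell/2$, the Lyapunov estimate alone yields $\E[V(Y_n)] \le e^{\eta\ell/2} + K/(1-\rho)$ uniformly in $n$, so Markov's inequality gives $\P[|S_n|>\ell] \le Ce^{-\eta\ell/2}$ for each $n$ and this rate is preserved after averaging against the law of $N$. If $x_0 > \ell/2$, then $t \ge x_0/\beta \ge \ell/(2\beta)$, so $\P[\tau_K > n_-] \le Ce^{-c_3\ell}$ and $\P[N<n_-] \le Ce^{-c_4\ell}$; on the complementary event $\{\tau_K \le n_- \le N\}$ the strong Markov property at $\tau_K$ restarts the walk in $[0,K]$, whence the Lyapunov bound yields $\E[V(Y_{\tau_K+m}) \mid \mathcal{F}_{\tau_K}] \le C$ uniformly in $m\ge 0$, and Markov's inequality gives $\P[|S_N|>\ell \mid \mathcal{F}_{\tau_K},\,\tau_K \le N] \le Ce^{-\eta\ell}$. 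The main obstacle is the possible dependence between $N$ (a function of $\{\uptau_i\}$) and the walk $S_n$ (a function of $\{X_i\}$), since within each pair $(X_i,\uptau_i)$ the two coordinates are not assumed independent; the resolution is that the Foster--Lyapunov bound on $\E[V(Y_n)]$ is uniform over $n$ in the relevant range, and the hitting-time decomposition at $\tau_K$ followed by the strong Markov property controls $S_N$ without requiring $N$ to be independent of the walk.
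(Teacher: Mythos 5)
Your preliminary work is sound: the time-overshoot bound via the renewal measure is fine, the observation that $Y_n:=|S_n|$ is an autonomous Markov chain with $Y_{n+1}=|Y_n-X_{n+1}|$ is correct, and the Foster--Lyapunov computation does give $\sup_n\E[e^{\eta Y_n}]\le C$ (for $|x_0|$ bounded) together with exponential concentration of $N:=h^T_{[t,\infty)}$ in a window $[n_-,n_+]$ of width $\approx \epsilon t/\nu$. The gap is exactly at the point you flag as ``the main obstacle'': passing from the fixed-time bound $\P[Y_n>\ell]\le Ce^{-\eta\ell/2}$ to a bound on $\P[Y_N>\ell]$. Your proposed resolution does not close it. Uniformity of $\E[V(Y_n)]$ over $n$ only helps if you either (a) have independence of $N$ from $(Y_n)$, which fails because $X_i$ and $\uptau_i$ may be dependent within a pair, or (b) take a union bound over the window, which costs a factor of order $\epsilon t$ and is therefore useless once $t\gg e^{\eta\ell/2}$ (and $t$ is allowed to be arbitrary in the proposition). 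The appeal to the strong Markov property at $\tau_K$ has the same defect: it yields $\P[Y_{\tau_K+m}>\ell\mid\scF_{\tau_K}]\le Ce^{-\eta\ell}$ for each \emph{fixed} $m$, but $N-\tau_K$ is a random time measurable only with respect to the full sequence $(\uptau_i)$ and correlated with the post-$\tau_K$ increments, so the asserted inequality $\P[|S_N|>\ell\mid\scF_{\tau_K},\,\tau_K\le N]\le Ce^{-\eta\ell}$ does not follow. Note also that one cannot simply confine the walk below level $\ell$ over the whole window: for $t\gg e^{c\ell}$ the reflected walk \emph{will} exceed $\ell$ somewhere in a window of length $\epsilon t$ with probability close to one, so the statement genuinely requires evaluating at the particular random time $N$, and some mechanism is needed to decouple that evaluation from the excursions of the walk.

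This is precisely what the paper's proof supplies and your proposal lacks. The paper first uses the unreflected walk $S^+$ and Lemma \ref{lem:right_line} to reduce to the case $|S_0|\le\ell/2$ (Lemma \ref{lem:straight}), and then runs a multiscale chaining argument: a sequence of space-time boxes $B_i$ of spatial width $\ell+k+1-i$ separated by \emph{deterministic} time gaps of length $(\ell+k-i)^4$, with Lemmas \ref{lem:inside_interval} and \ref{lem:here_to_there} confining the walk inside a spatial window of scale $m$ for \emph{all} of the $\sim m^6$ steps in each gap (failure probability $e^{-cm}$, since $m^6 e^{-c'm}\le e^{-cm}$). Because the confinement holds at every deterministic index in the final window, it holds in particular at the random index $N$, and the dependence between $N$ and the walk never needs to be addressed; the telescoping of scales is what allows the total time $t$ to be arbitrary while keeping the terminal spatial scale equal to $\ell$. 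To repair your argument you would need either this kind of deterministic-window confinement at shrinking scales, or a genuinely quantitative treatment of the joint law of $(Y_n)$ and $N$ (e.g.\ conditioning on $\sigma(\uptau_1,\uptau_2,\dots)$ and controlling the resulting inhomogeneous chain near the size-biased interval straddling $t$), neither of which is present in the proposal.
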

In words: at the first time $n$ at which $T_n$ is above $t$, it is very likely that $(S_n, T_n)$ belongs to the box $[-\ell,\ell] \times [t,t+\ell]$. The proof of Proposition \ref{prop:cone} will depend on two preliminary results, Lemmas \ref{lem:right_line} and \ref{lem:straight}.

\begin{lemma}\label{lem:right_line}
For any $\beta < \bar{\beta}$, there exists $c > 0$ such that, if $S_0 = S^+_0 = T_0 = 0$, then
$$\P\left[S_n^+ \geq \beta T_n - \ell \; \forall n \right] > 1-2\exp(-c\ell),\qquad \ell > 0.$$
\end{lemma}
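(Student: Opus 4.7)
The plan is to recognize that the event $\{S_n^+ \ge \beta T_n - \ell \;\forall n\}$ is a one-sided fluctuation bound for the single random walk $Z_n := S_n^+ - \beta T_n$, and then apply part (2) of Lemma \ref{lem:large_deviations} directly.

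More concretely, I would set $Y_i = X_i - \beta \uptau_i$ for $i \ge 1$, and $Z_n = \sum_{i=1}^n Y_i$, so that $Z_n = S_n^+ - \beta T_n$ (using $S_0^+ = T_0 = 0$). The random variables $(Y_i)$ are i.i.d., and
$$\rho := \mathbb{E}[Y_1] = \mu - \beta \nu = \nu(\bar{\beta} - \beta) > 0,$$
by the assumption $\beta < \bar{\beta}$. The exponential moment assumption \eqref{eq:assume_tail} on $X_1$ and $\uptau_1$ gives $\mathbb{E}[\exp(\kappa|Y_1|)] < \infty$ for $\kappa > 0$ sufficiently small, since $|Y_1| \le |X_1| + \beta \uptau_1$. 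Hence $(Y_i)$ satisfies the hypothesis of part (2) of Lemma \ref{lem:large_deviations}.

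Applying \eqref{eq:minimum_not_small} with $\varepsilon = \rho/2$, there exists $c > 0$ such that for every $\ell \ge 0$,
$$\mathbb{P}\left[-\ell + \tfrac{\rho}{2}\, n < Z_n < \ell + \tfrac{3\rho}{2}\, n \text{ for all } n \ge 0 \right] > 1 - 2\exp(-c\ell).$$
Since $\tfrac{\rho}{2} n \ge 0$ for $n \ge 0$, the inequality $Z_n > -\ell + \tfrac{\rho}{2} n$ implies $Z_n \ge -\ell$, i.e. $S_n^+ \ge \beta T_n - \ell$, for every $n \ge 0$. This yields the claim.

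There is no serious obstacle; the only non-routine observation is that it is more profitable to look at the single combined walk $Z_n = S_n^+ - \beta T_n$ rather than to try to control $(S_n^+)$ and $(T_n)$ separately (which would require a union bound over time scales and give a weaker estimate). Everything else is a direct invocation of Lemma \ref{lem:large_deviations}(2).
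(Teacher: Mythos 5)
Your proof is correct. It rests on the same key estimate as the paper's, namely the uniform-in-$n$ lower bound \eqref{eq:minimum_not_small} for a random walk with positive drift, but the decomposition is slightly different: you apply that bound once to the single combined walk $Z_n = S_n^+ - \beta T_n$, whose increments $Y_i = X_i - \beta\uptau_i$ are i.i.d.\ with mean $\nu(\bar{\beta}-\beta)>0$, whereas the paper applies it twice --- once to $S_n^+$ (lower bound $S_n^+ \geq \mu' n - \ell/2$) and once to $T_n$ (upper bound $T_n \leq \nu' n + \tfrac{\nu'}{2\mu'}\ell$) for auxiliary constants $\mu'<\mu$, $\nu'>\nu$ with $\beta < \mu'/\nu' < \bar{\beta}$ --- and then combines the two events algebraically. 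Your version is marginally cleaner since it avoids choosing $\mu',\nu'$ and the union bound; the paper's version buys nothing extra here. One cosmetic point: your bound $|Y_1|\leq |X_1| + \beta\uptau_1$ implicitly assumes $\beta\geq 0$; write $|Y_1|\leq |X_1|+|\beta|\uptau_1$ so the exponential moment claim covers all $\beta<\bar{\beta}$. Note also that \eqref{eq:minimum_not_small} is applied to the non-lattice walk $Z_n$, but this is harmless since part (2) of Lemma \ref{lem:large_deviations} does not require integer-valued increments (the paper itself applies it to $T_n$).
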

\begin{proof}
Given $\beta < \bar{\beta}$, choose $\mu' < \mu$ and $\nu' > \nu$ such that $\beta < \frac{\mu'}{\nu'} < \bar{\beta}$. By \eqref{eq:minimum_not_small}, there exists $c > 0$ such that for every $\ell$, with probability larger than $1-2\exp(-c\ell)$,
$$S_n^+ \geq \mu' n - \frac{\ell}{2}\quad \text{and}\quad T_n \leq \nu'n + \frac{ \nu'}{2 \mu'}\ell \quad \text{for all } n \geq 0. $$
If this occurs, then $$\beta T_n - \ell \leq \frac{\mu'}{\nu'}\left(\nu'n + \frac{\nu'}{2\mu'}\ell\right) - \ell \leq S_n^+$$ for every $n$.
\end{proof}

The following is a weaker version of Proposition \ref{prop:cone} which requires the initial position to be in the inner half of the interior of the spatial range of the target box.
\begin{lemma}\label{lem:straight}
There exists $ c > 0$ such that, for $\ell$ large enough and any $t \geq 0$,
\begin{equation}\begin{split}
&|S_0| \leq \frac{\ell}{2},\; T_0 = 0 \quad\Longrightarrow \quad\P\left[\left(S_{h^T_{[t,\infty)}}, T_{h^T_{[t,\infty)}}\right)  \in [-\ell,\ell]\times [t, t+\ell] \right] > 1-\exp(-c \ell).\end{split}
\end{equation}
\end{lemma}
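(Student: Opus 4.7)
The plan is to bound the events $\{T_N > t + \ell\}$ and $\{|S_N| > \ell\}$ separately and then combine by a union bound. For the temporal estimate, since $(T_n)$ is a renewal process whose increments $\uptau_i$ have an exponential moment (by \eqref{eq:assume_tail}), Lemma \ref{lem:large_deviations}(1) applied with $Z_n = T_n$ at level $t$ bounds the overshoot: $\P(T_N - t > \ell) \le \sum_{i > \ell}\P(\uptau_1 \ge i) \le Ce^{-c\ell}$, since $\sum_n \P[T_n = 0] = 1$ (the $\uptau_i$ are strictly positive).

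The bulk of the work is to show $\P(|S_N| > \ell) \le Ce^{-c\ell}$ uniformly in $t$. The key observation is that the steering rule forces $|S_n| = \bigl||S_{n-1}| - X_n\bigr|$ (by case analysis on the sign of $S_{n-1}$), so the magnitude process $W_n := |S_n|$ is itself a Markov chain on $\{0,1,2,\ldots\}$. I would capture its drift toward $0$ via an exponential Lyapunov function: for $\alpha > 0$ sufficiently small and any $w \ge 0$,
\begin{equation*}
\E\bigl[e^{\alpha W_n} \,\big|\, W_{n-1} = w \bigr] = \E\bigl[e^{\alpha |w - X_1|}\bigr] \le e^{\alpha w}\, \E[e^{-\alpha X_1}] + e^{-\alpha w}\,\E[e^{\alpha X_1}] \le \kappa\, e^{\alpha w} + C_\alpha,
\end{equation*}
where $\kappa := \E[e^{-\alpha X_1}] < 1$ because $\mu = \E[X_1] > 0$ and $\alpha$ is small, and $C_\alpha := \E[e^{\alpha X_1}] < \infty$ by \eqref{eq:assume_tail}. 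Iterating gives $\E[e^{\alpha W_n}] \le \kappa^n\, e^{\alpha W_0} + K$ with $K := C_\alpha/(1-\kappa)$, valid for every $n \ge 0$. Combined with $W_0 \le \ell/2$ and Markov's inequality, this yields the $n$-uniform tail bound $\P(W_n > \ell) \le C' e^{-\alpha \ell / 2}$.

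The remaining task is to upgrade this into a bound on $\P(W_N > \ell)$ at the random time $N$. I would split on the size of $N$. First, a Chernoff estimate on $T_n$ (using the exponential moment of $\uptau$) gives $\P(N > 2t/\nu + \ell) \le e^{-c\ell}$, uniformly in $t$. Next, for $N \le A\ell$ with $A$ a small constant (depending on the law of $X_1$), the pathwise bound $|S_N - S_0| \le \sum_{i=1}^{A\ell}|X_i|$ together with \eqref{eq:concentration} gives $|S_N| \le \ell$ with probability $\ge 1 - e^{-c\ell}$. For intermediate $N \in (A\ell, 2t/\nu + \ell]$, I would use that, as a consequence of the Lyapunov inequality, $\bigl(e^{\alpha W_n} - K\bigr)/\kappa^n$ is a supermartingale; a Doob-type maximal inequality then yields $\P(\exists n \in (A\ell, 2t/\nu + \ell] : W_n > \ell) \le C e^{-\alpha \ell / 2}$. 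Combining the three pieces gives the desired bound on $\P(W_N > \ell)$.

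The most delicate step is the intermediate-$N$ case: the supermartingale $(e^{\alpha W_n} - K)/\kappa^n$ is not bounded below (it diverges to $-\infty$ whenever $W_n$ is small), so Doob's inequality does not apply verbatim. I expect to handle this either by passing to the positive part $(e^{\alpha W_n} - K)^+$ and tracking its excursions above and below zero, or, more robustly, by invoking exponential ergodicity of $(W_n)$: after $O(\ell)$ steps, the law of $W_n$ is within total-variation distance $e^{-c\ell}$ of its stationary measure, which itself has exponentially decaying tails. The latter approach has the extra benefit of being insensitive to any dependence between $X_i$ and $\uptau_i$ within a pair, which is the main subtlety obstructing a completely direct conditioning argument.
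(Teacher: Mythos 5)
Your temporal estimate (the overshoot bound via Lemma \ref{lem:large_deviations}(1)), your observation that $W_n=|S_n|$ is itself a Markov chain satisfying $W_n=|W_{n-1}-X_n|$, the exponential Lyapunov bound $\E[e^{\alpha W_n}]\le \kappa^n e^{\alpha W_0}+K$, and the small-$N$ and large-$N$ cases are all fine. The gap is in the intermediate case, and it is not the technical issue you flag (the supermartingale being unbounded below); the bound you are trying to prove there is simply false. The chain $(W_n)$ is positive recurrent with a stationary law whose tail at level $\ell$ is of order $e^{-c\ell}$, so over a time horizon of length $B\gg e^{c\ell}$ it exceeds level $\ell$ with probability close to $1$. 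Since the lemma must hold uniformly in $t$, and the intermediate window has length of order $t/\nu$, no maximal inequality can give $\P\bigl[\exists\, n\in(A\ell,\,2t/\nu+\ell]:W_n>\ell\bigr]\le Ce^{-c\ell}$ once $t$ is super-exponential in $\ell$. Your ergodicity fallback does not repair this either: exponential ergodicity only yields the fixed-time marginal bound $\P[W_n>\ell]\le Ce^{-c\ell}$, which you already have from the Lyapunov iteration. The genuine difficulty is evaluating $W$ at the random crossing time $N=h^T_{[t,\infty)}$: the event $\{N=n\}$ constrains $\uptau_n$, and since $X_n$ and $\uptau_n$ may be dependent within a pair (as they are in the application to $(\mathcal{X}^{\vec\kappa},\mathcal{T}^{\vec\kappa})$), conditioning on $\{N=n\}$ biases the final increment $X_n$; a naive decomposition $\sum_n\P[N=n,\,|X_n|>\ell/2]\le\E[N]\cdot\P[|X_1|>\ell/2]$ again fails to be uniform in $t$.

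The paper avoids this entirely by a multi-scale chaining argument. Lemma \ref{lem:here_to_there} proves the statement only for time horizons polynomial in the spatial scale ($t\le m^5$), where the number of steps is at most $m^6$ with high probability and a crude union bound over steps (via Lemma \ref{lem:inside_interval}) is affordable. Lemma \ref{lem:straight} then handles arbitrary $t$ by forcing the walk through a decreasing sequence of re-centering boxes $B_{k+1},\dots,B_1$ of half-widths $\ell+k,\dots,\ell$ separated by time gaps $(\ell+i)^4$, with the stage-$i$ failure probability $e^{-c(\ell+k+1-i)}$ summing to $e^{-c\ell}$. To salvage your approach you would need some analogous restart mechanism that reduces the problem to polynomially bounded windows ending at $t$; the Lyapunov function would then be a clean substitute for Lemmas \ref{lem:inside_interval} and \ref{lem:here_to_there}, but as written the argument does not close.
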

Before proving this lemma, we will show how it can be combined with Lemma \ref{lem:right_line} (and the estimates of Lemma \ref{lem:large_deviations}) to prove Proposition \ref{prop:cone}.
\begin{proof}[Proof of Proposition \ref{prop:cone}]  Fix $\beta < \bar{\beta}$ and let $\ell$ be large enough as required in Lemma \ref{lem:straight}. Also let $t \geq 0$ and $x_0 \in \Z$ with $|x_0|\leq \beta\cdot t$. We will only treat the case where \begin{equation}x_0 \in [-\beta t,0];\label{eq:by_sym}\end{equation} the proof of the case $x_0 \in (0,\beta t]$ is entirely similar.  Throughout the proof, we will say an event occurs with high probability if its probability is larger than $1-\exp(-c\ell)$ for some $c > 0$ and $\ell$ large enough. 

Let
$$n^\star = \min\{h^T_{[t,\infty)},\;h^{S^+}_{[0,\infty)}\},$$
that is, $n^\star$ is the first time $n$ when we either have $T_n \geq t$ or $S_n^+ \geq 0$.
We will treat the two situations $n^\star = h^T_{[t,\infty)} < h^{S^+}_{[0,\infty)}$ and $n^\star = h^{S^+}_{[0,\infty)}\leq  h^T_{[t,\infty)}$ separately.

First assume that $n^\star = h^T_{[t,\infty)} < h^{S^+}_{[0,\infty)}$. Using \eqref{eq:by_sym}, we then have
$$T_{n^\star} \geq t,\qquad S_{n^\star} = S^+_{n^\star} \leq 0.$$
Moreover, using \eqref{eq:neat_form}, \eqref{eq:assume_tail}  and Chebyshev's inequality, with high probability we also have $T_{n^\star} \leq t+ \ell$. Additionally, by Lemma \ref{lem:right_line}, with high probability,
$$S_{n^\star} \geq x_0 - \ell + \beta \cdot T_{n^\star} \stackrel{\eqref{eq:by_sym}}{\geq} - \beta \cdot t - \ell +\beta \cdot T_{n^\star} \geq - \ell.$$
If all these conditions hold, then $(S_{n^\star},T_{n^\star}) \in [-\ell, \ell] \times [t, t+\ell]$.
This proves that
$$\P\left[h^T_{[t,\infty)} < h^{S^+}_{[0,\infty)},\; \left(S_{h^T_{[t,\infty)}}, T_{h^T_{[t,\infty)}}\right) \notin  [-\ell,\ell] \times [t,t+\ell]\right] < \exp(-c\ell).$$

We now turn to the case $n^\star = h^{S^+}_{[0,\infty)}\leq  h^T_{[t,\infty)}$. Again by \eqref{eq:by_sym}, we then have $S_{n^\star}\geq 0$, and by \eqref{eq:neat_form}, with high probability, we have $S_{n^\star} \leq \ell/2$. Then,  Lemma \ref{lem:straight} and the Markov property imply that, with high probability, $\left(S_{h^T_{[t,\infty)}}, T_{h^T_{[t,\infty)}}\right)  \in [-\ell,\ell] \times [t,\;t+\ell].$
Therefore,
$$\P\left[h^{S^+}_{[0,\infty)} \leq h^T_{[t,\infty)} ,\; \left(S_{h^T_{[t,\infty)}}, T_{h^T_{[t,\infty)}}\right) \notin [-\ell,\ell] \times [t,t+\ell]\right] < \exp(-c\ell),$$
completing the proof.
\end{proof}

We now turn to the proof of Lemma \ref{lem:straight}. We will need two more preliminary results, Lemmas \ref{lem:inside_interval} and \ref{lem:here_to_there}.

\begin{lemma} \label{lem:inside_interval} There exists $c > 0$ such that, if $m > 0$,
\begin{align}\begin{split}&|S_0|\leq m  \Longrightarrow   \P\left[|S_n| \leq 2m \; \forall n \leq m^6\right] > 1-\exp(-cm). \end{split}
\end{align}
\end{lemma}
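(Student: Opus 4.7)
The plan is to reduce the statement to an exponential-moment bound on the autonomous one-dimensional Markov chain $R_n := |S_n|$.

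First, one checks the identity
\[ R_{n+1} = |R_n - X_{n+1}|,\qquad n\ge 0, \]
by case analysis on the sign of $S_n$: when $S_n>0$ one has $S_{n+1}=S_n-X_{n+1}$, when $S_n\le 0$ one has $S_{n+1}=S_n+X_{n+1}=-R_n+X_{n+1}$, and in either case $|S_{n+1}|=|R_n-X_{n+1}|$. Hence $(R_n)_{n\ge 0}$ is a Markov chain with i.i.d.\ ``reflected'' increments driven by the $X_k$'s, and it suffices to prove that, starting from $R_0\le m$,
\[ \P\!\left[\max_{n\le m^6} R_n \ge 2m\right] \le \exp(-cm). \]

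Next, I would pick $\sigma>0$ small enough that $\E[e^{\sigma|X_1|}]<\infty$ (allowed by \eqref{eq:assume_tail}) and
\[ \E[e^{-\sigma X_1}] \le 1 - \tfrac{\sigma\mu}{2}, \]
the latter being valid for small $\sigma>0$ because $\mu:=\E[X_1]>0$ by \eqref{eq:assume_drift} and $\frac{d}{d\sigma}\E[e^{-\sigma X_1}]\big|_{\sigma=0}=-\mu$. Setting $V(r):=e^{\sigma r}$ and splitting on whether $X_1\le r$ or $X_1>r$, the bound $e^{-\sigma r}\le 1$ yields, uniformly in $r\ge 0$,
\[ \E[V(|r-X_1|)] = e^{\sigma r}\,\E[e^{-\sigma X_1};\,X_1\le r] + e^{-\sigma r}\,\E[e^{\sigma X_1};\,X_1>r] \le (1-\tfrac{\sigma\mu}{2})V(r) + \E[e^{\sigma X_1}]. \]
Writing $\delta:=\sigma\mu/2$ and $C:=\E[e^{\sigma X_1}]$, this is the drift inequality $\E[V(R_{n+1})\mid R_n]\le (1-\delta)V(R_n)+C$, which upon iteration gives
\[ \E[V(R_n)] \le V(R_0) + C/\delta \le e^{\sigma m} + C/\delta \qquad \text{for all }n. \]

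Finally, Markov's inequality gives $\P[R_n\ge 2m] \le (e^{\sigma m}+C/\delta)\,e^{-2\sigma m} \le 2e^{-\sigma m}$ for $m$ large, and a union bound over $n\in\{0,1,\ldots,\lfloor m^6\rfloor\}$ yields $\P[\max_{n\le m^6} R_n\ge 2m]\le 2m^6 e^{-\sigma m}$, which is dominated by $e^{-cm}$ for any $c<\sigma$ once $m$ is large; for bounded $m$ the stated inequality holds after shrinking $c$. The only real point to get right is the identity $R_{n+1}=|R_n-X_{n+1}|$, since without the effective mean-reversion it provides, the crude estimate $R_n\le R_0+\sum_{k\le n}|X_k|$ is useless by time $n=m^6$ (the right-hand side is typically of order $m^6$).
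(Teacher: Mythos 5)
Your proof is correct, but it takes a genuinely different route from the paper's. The paper handles Lemma \ref{lem:inside_interval} by excursion decomposition: starting from $S_0\in[-m,0]$ it observes that $S_n=S_n^+$ up to the hitting time $h^S_{(0,\infty)}$, then invokes the overshoot bound \eqref{eq:neat_form} and the running-minimum bound \eqref{eq:minimum_not_small} from Lemma \ref{lem:large_deviations} to conclude that each excursion stays in $[-2m,2m]$ and lands back in $[-m,m]$ with probability $1-2e^{-cm}$, and finally strings excursions together via the Markov property (with an implicit union bound over the at most $m^6$ excursions). You instead exploit the exact recursion $R_{n+1}=|R_n-X_{n+1}|$ for $R_n=|S_n|$ — which is a correct and rather elegant observation, verified by your two-case computation — and run a Foster--Lyapunov drift argument with $V(r)=e^{\sigma r}$: the splitting on $\{X_1\le r\}$ versus $\{X_1>r\}$ and the bounds $\E[e^{-\sigma X_1};X_1\le r]\le\E[e^{-\sigma X_1}]\le 1-\sigma\mu/2$ and $e^{-\sigma r}\le 1$ are all valid, the geometric iteration gives a uniform-in-$n$ bound on $\E[V(R_n)]$, and Chebyshev plus a union bound over $n\le m^6$ finishes. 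Your approach buys a self-contained argument that does not rely on Lemma \ref{lem:large_deviations} at all and makes the mean-reversion mechanism completely explicit; the paper's approach buys economy, reusing estimates it needs elsewhere anyway. One shared caveat: both arguments really establish the bound only for $m$ large (the paper's large-deviation inputs have the same restriction), and for small $m$ the statement as written can actually fail under the standing hypotheses (e.g.\ $X_1$ deterministic and large), so your closing remark about ``shrinking $c$ for bounded $m$'' glosses over the same issue the paper does; since the lemma is only ever applied with $m=\ell$ large, this is immaterial.
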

\begin{proof} 
Assume first that $S_0 \in [-m, 0]$. Recall the definition of $(S^+_n)$ in \eqref{eq:def_S+} and note that $h^S_{(0,\infty)} = h^{S^+}_{(0,\infty)}$ and $S_n = S^+_n$ for $0 \leq n \leq h^S_{(0,\infty)}$. Hence,
\begin{align*}
\P\left[\min_{0 \leq n \leq h^S_{(0,\infty)}} S_n \geq -2m,\; S_{h^S_{(0,\infty)}} \leq m\right] &\geq 1-\P\left[ \min_{0 \leq n \leq h^{S^+}_{(0,\infty)}} S^+_n < -2m\right]-\P\left[S^+_{h^{S^+}_{(0,\infty)}} > m \right]\\
&\geq 1 - 2\exp(-cm)
\end{align*}
for some $c > 0$, by \eqref{eq:neat_form}, \eqref{eq:minimum_not_small} and \eqref{eq:assume_tail}. Similarly, if $S_0 \in (0,m]$ we have
$$\P\left[\max_{0 \leq n \leq h^S_{(-\infty,0]}} S_n \leq 2m,\; S_{h^S_{(-\infty,0]}} \geq -m \right] \geq 1-2\exp(-cm).$$
Applying these bounds together with the Markov property yields the desired result.
\end{proof}

\begin{lemma}\label{lem:here_to_there}
There exists $c > 0$ such that, if $m > 0$,
\begin{align}\label{eq:bar_to_large_box}
\begin{split}&S_0 \in [-m, m], \; T_0 = 0,\; 0 \leq t \leq m^5 \quad \\&\hspace{2cm}\Longrightarrow  \P\left[\left(S_{h^T_{[t,\infty)}}, T_{h^T_{[t,\infty)}}\right) \in [-2m,2m] \times [t, t+m]\right] > 1-2\exp(-cm);\end{split}\\[.2cm]\label{eq:bar_to_small_box}
\begin{split}&S_0 \in [-m, m], \; T_0 = 0,\; m^3 \leq t \leq m^5 \quad \\&\hspace{2cm}\Longrightarrow  \P\left[(S_{h^T_{[t,\infty)}}, T_{h^T_{[t,\infty)}}) \in [-m/2,m/2] \times [t, t+m]\right] > 1-2\exp(-cm).\end{split}
\end{align}
\end{lemma}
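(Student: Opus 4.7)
The plan is to prove both \eqref{eq:bar_to_large_box} and \eqref{eq:bar_to_small_box} by controlling $T_{h^T_{[t,\infty)}}$ and $S_{h^T_{[t,\infty)}}$ separately and combining them by union bound. For the time coordinate, \eqref{eq:minimum_not_small} applied to $(T_n)$ gives, with probability at least $1 - 2\exp(-cm)$,
\[(\nu - \varepsilon)\, n - m \;\le\; T_n \;\le\; (\nu + \varepsilon)\, n + m \quad \forall\, n\ge 0,\]
for a fixed small $\varepsilon > 0$. On this event $h^T_{[t,\infty)} \le (t+m)/(\nu - \varepsilon) \le Cm^5$, and when $t \ge m^3$ also $h^T_{[t,\infty)} \ge (t-m)/(\nu + \varepsilon) \ge cm^3$ for $m$ large. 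The overshoot $T_{h^T_{[t,\infty)}} - t$ is at most $\uptau_{h^T_{[t,\infty)}}$, and a union bound combined with \eqref{eq:assume_tail} gives $\max_{n \le Cm^5} \uptau_n \le m$ with probability at least $1 - Cm^5\exp(-\sigma m) \ge 1 - \exp(-cm)$. This already yields $T_{h^T_{[t,\infty)}} \in [t, t+m]$ with the required probability.

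For the spatial coordinate in \eqref{eq:bar_to_large_box}, Lemma \ref{lem:inside_interval} applied to the hypothesis $|S_0| \le m$ yields $|S_n| \le 2m$ for all $n \le m^6$ with probability at least $1 - \exp(-cm)$; combined with $h^T_{[t,\infty)} \le Cm^5 \le m^6$ this finishes \eqref{eq:bar_to_large_box}.

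The tighter bound in \eqref{eq:bar_to_small_box} requires showing that the walk enters a small neighborhood of the origin well before time $cm^3$. Assume without loss of generality $S_0 = x_0 > 0$ (the case $x_0 \le 0$ is symmetric), and let $\tau = \inf\{n \ge 1 : S_n \le 0\}$. For all $n \le \tau$ we have $S_{n-1} > 0$, so $S_n = x_0 - (X_1 + \cdots + X_n)$, and \eqref{eq:minimum_not_small} applied to the sums of $X_i$ shows $\{\tau \le 2m/(\mu - \varepsilon)\}$ has probability at least $1 - 2\exp(-cm)$. At the sign change, $|S_\tau| = X_\tau - S_{\tau - 1} \le X_\tau$, and a union bound against \eqref{eq:assume_tail} gives $\max_{n \le 2m/(\mu-\varepsilon)} X_n \le m/8$ with probability at least $1 - \exp(-cm)$. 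On the intersection, $|S_\tau| \le m/8$ at a time $\tau \le 2m/(\mu - \varepsilon)$. Applying the strong Markov property at $\tau$ and Lemma \ref{lem:inside_interval} (with its parameter $m$ set to $m/8$) to the chain $(S_{\tau + n})_{n \ge 0}$ yields $|S_n| \le m/4$ for all $n \in [\tau, \tau + (m/8)^6]$, with probability at least $1 - \exp(-cm)$. For $m$ large one has $\tau \le 2m/(\mu-\varepsilon) < cm^3 \le h^T_{[t,\infty)} \le Cm^5 < \tau + (m/8)^6$, so $h^T_{[t,\infty)}$ lies in this window and $|S_{h^T_{[t,\infty)}}| \le m/4 \le m/2$, as desired.

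The main technical point is controlling the overshoot $|S_\tau|$ at the sign change; this is handled by the elementary bound $|S_\tau| \le X_\tau$ together with a polynomial-versus-exponential union bound using \eqref{eq:assume_tail}. Everything else is routine assembly of the large-deviation estimates \eqref{eq:concentration}--\eqref{eq:minimum_not_small} for the renewal process $(T_n)$ and the random walk $(X_1 + \cdots + X_n)$, combined with the previously established Lemma \ref{lem:inside_interval}.
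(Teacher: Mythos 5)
Your proof is correct and follows essentially the same strategy as the paper's: handle the time coordinate and the overshoot $T_{h^T_{[t,\infty)}}-t$ by the large-deviation estimates for $(T_n)$, confine $(S_n)$ via Lemma \ref{lem:inside_interval} for \eqref{eq:bar_to_large_box}, and for \eqref{eq:bar_to_small_box} first drive the walk to a small neighborhood of the origin within $O(m)$ steps (well before $h^T_{[t,\infty)}\geq cm^3$) and then confine it there. The only cosmetic difference is that you control the spatial overshoot at the sign change by a union bound on $\max_n X_n$ over polynomially many steps, whereas the paper uses the ladder-height estimate \eqref{eq:neat_form}; both are valid.
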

\begin{proof}
Assume that $(S_0, T_0) \in [-m,m]\times \{0\}$ and $0 \leq t \leq m^5$. Then, the event in the probability in \eqref{eq:bar_to_large_box} holds as soon as
$$h^T_{[t,\infty)} \leq m^6,\quad T_{h^T_{[t,\infty)} }-t \leq m,\quad \max_{0 \leq n \leq m^6} |S_n| \leq 2m. $$
By \eqref{eq:assume_tail} and Lemmas \ref{lem:large_deviations} and \ref{lem:inside_interval}, all these conditions hold with probability larger than $1-2\exp(-cm)$ for some $c > 0$, proving \eqref{eq:bar_to_large_box}.

We now turn to \eqref{eq:bar_to_small_box}. Assume first that $S_0 \in [-m,0]$. If we have
\begin{equation}
h^S_{(0,\infty)} \leq m^2,\quad h^T_{[m^3,\infty)} > m^2, \quad \text{and } S_{h^S_{(0,\infty)}} \leq \frac{m}{4},
\end{equation}
then there exists some $n^*$ such that $(S_{n^*}, T_{n^*}) \in [0,m/4]\times[0, m^3]$. Then, \eqref{eq:bar_to_small_box} follows from \eqref{eq:bar_to_large_box}. The case where $S_0 \in (0,m]$ is treated similarly.
\end{proof}

\begin{proof}[Proof of Lemma \ref{lem:straight}] Fix $\ell > 0$ and $t \geq 0$. We will present a construction consisting of disjoint space-time boxes labeled increasingly in time, so that with high probability the process $(S_n,T_n)_{n \geq 0}$ visits all of them and the last one is the `target' box in the statement of the lemma, $[-\ell,\ell] \times [t, t+\ell]$. A quick glimpse at Figure \ref{fig:boxes} will help the reader understand the construction.

We let $s_0 = t$ and
$$s_{i+1} = s_i - (\ell + i)^4,\qquad i = 0,1,\ldots.$$
Then define $k = \max\{i: s_i > 0\}$, and let $0=t_0 < t_1<\ldots < t_{k+1} = t$ be the values $s_0, \ldots, s_k, 0$ labeled in increasing order, that is, $t_0 = 0$ and $$t_i = s_{k+1-i},\qquad i = 1,\ldots, k+1.$$
Next, define the boxes
$$B_i = [-(\ell + k + 1 -i), \ell + k + 1-i] \times [t_i, \; t_i + \ell + k + 1 -i],\qquad i=1,\ldots, k+1.$$

Now, since $(S_0, T_0) \in [-\ell/2,\ell/2]\times \{0\}$, \eqref{eq:bar_to_large_box} implies that with probability larger than $1-\exp(-c(\ell + k))$, we have
$$\left(S_{h^T_{[t_1,\infty)}}, T_{h^T_{[t_1,\infty)}}\right) \in B_1.$$ Next, \eqref{eq:bar_to_small_box} implies that, for $n \in \N$ and $i \leq k$,
$$\P\left[ \left.\left(S_{h^T_{[t_{i+1},\infty)}},T_{h^T_{[t_{i+1},\infty)}}\right) \in B_{i+1} \right| (S_n, T_n) \in B_i\right] > 1-\exp(-c(\ell + k - i)). $$
Hence, with high probability, for every $i$, at the first $n$ for which we have $T_n > t_i$, we have $(S_n, T_n) \in B_i$. This completes the proof.

\begin{figure}[htb]
\begin{center}
\setlength\fboxsep{0pt}
\setlength\fboxrule{0.5pt}
\fbox{\includegraphics[width = 0.4\textwidth]{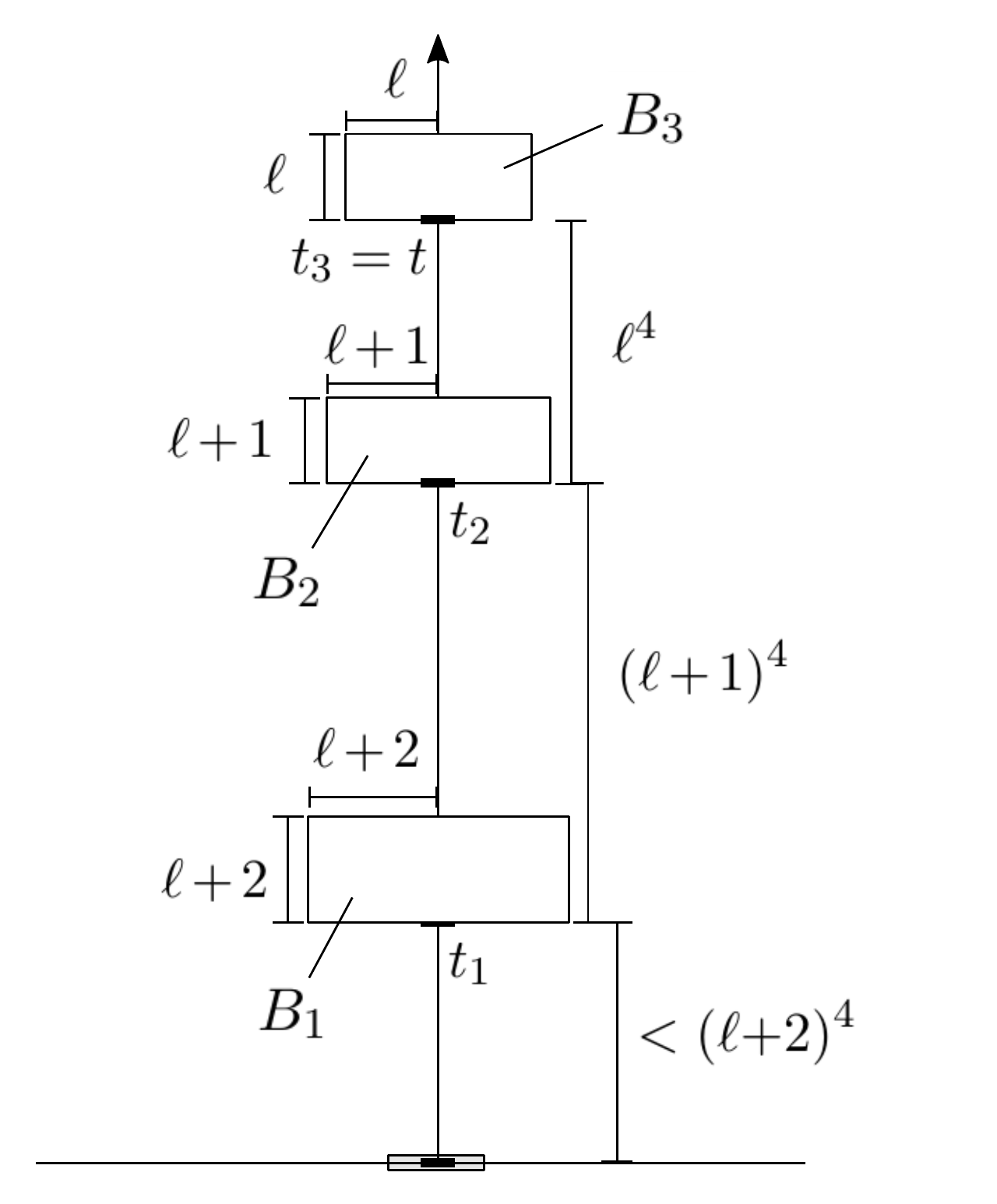}}
\end{center}
\caption{The initial position $(S_0,T_0)=(S_0,0)$ is located inside $[-\tfrac{\ell}{2},\tfrac{\ell}{2}] \times \{0\}$, the grey bar on the horizontal axis. With high probability, each of the boxes $B_i$ contains $(S_n,T_n)$ for some $n$. }
\label{fig:boxes}
\end{figure}

\end{proof}


\newpage

\end{document}